\documentclass[10pt]{amsart}
\newfont{\cyr}{wncyr10 scaled 1100}
\setlength{\topmargin}{0in} \addtolength{\topmargin}{-\headheight}
\addtolength{\topmargin}{-\headsep}
\setlength{\oddsidemargin}{0in}
\oddsidemargin  0.0in \evensidemargin 0.0in \parindent 0em
\usepackage[top=1.3in, bottom=1.3in, left=1.1in, right=1.1in]{geometry}
\setlength{\parskip}{0.3em}
\setcounter{tocdepth}{2}
\usepackage{amsthm,amssymb,amsmath,amsfonts,mathrsfs,amscd, graphics}
\usepackage{mathtools}
\usepackage[latin1]{inputenc}
\usepackage{pifont}
\usepackage{circledsteps}
\usepackage[all]{xy}
\usepackage{latexsym}
\usepackage{longtable}
\usepackage{color}
\usepackage{tikz}
\usepackage{tikz-cd}
\usepackage{lua-visual-debug}
\usepackage{dsfont}
\usetikzlibrary{matrix}
\usepackage{hyperref}
\usepackage{enumitem}
\usepackage{setspace}

\newcommand*\ZZ{|[draw,circle]| \Z_2}
\numberwithin{equation}{section}
\setcounter{tocdepth}{2}

\theoremstyle{plain}
\newtheorem{theorem}{Theorem}[section]
\newtheorem*{theorem*}{Theorem}
\newtheorem{corollary}[theorem]{Corollary}
\newtheorem{lemma}[theorem]{Lemma}
\newtheorem{proposition}[theorem]{Proposition}
\newtheorem{conjecture}[theorem]{Conjecture}
\numberwithin{equation}{section}
\newtheorem{thm}{Theorem}
\newtheorem{ass}[thm]{Assumption}

\theoremstyle{definition}

\newtheorem{definition}[theorem]{Definition}

\newtheorem{assumption}[theorem]{Assumption}
\newtheorem{remark}[equation]{Remark}

\theoremstyle{remark}
\newtheorem{obswr}[theorem]{Observation}

\newtheorem{intro-definition}[theorem]{Definition}

\newenvironment{myproof}[2] {\paragraph{\emph{Proof of {#1} {#2} }}}{\hfill$\square$}

\def\Gal{\mathrm{Gal}}
\def\GL{\mathrm{GL}}

\def\det{\mathrm{det}}

\def\loc{\mathrm{loc}}
\def\ord{\mathrm{ord}}
\def\im{\mathrm{im}}
\def\coker{\mathrm{coker}}

\def\AJ{\mathrm{AJ}}

\def\sing{\mathrm{sin}}
\def\Frob{\mathrm{Frob}}
\def\new{\mathrm{new}}
\def\ac{\mathrm{ac}}
\def\frakm{\mathfrak{m}}

\def\Qbar{\QQ^{\ac}}

\def\interX{\mathfrak{X}}
\def\ss{\mathrm{ss}}
\def\ur{\mathrm{ur}}
\def\ac{\mathrm{ac}}

\def\ram{\mathrm{ram}}

\def\undlamb{\underline{\lambda}}
\def\triplef{\underline{\mathbf{f}}}

\def\threetensor{\overset{3}{\underset{i=1}\otimes}}
\def\threesum{\overset{3}{\underset{j=1}\oplus} }
\def\triplepnp {{\frakp}^{[p]}_{\triplef, n}}

\DeclareMathOperator{\Sym}{Sym}
\DeclareMathOperator{\Spec}{Spec}

\DeclareMathOperator{\Hom}{Hom}

\DeclareMathOperator{\End}{End}

\DeclareMathOperator{\tr}{tr}

\def\calH{\mathcal{H}}

\def\calO{\mathcal{O}}

\def\frakm{\mathfrak{m}}

\def\frakp{\mathfrak{p}}

\def\Adel{\mathbf{A}}

\def\CC{\mathbf{C}}

\def\FF{\mathbf{F}}

\def\PP{\mathbf{P}}
\def\QQ{\mathbf{Q}}

\def\TT{\mathbf{T}}

\def\ZZ{\mathbf{Z}}

\def\rmA{\mathrm{A}}

\def\rmD{\mathrm{D}}
\def\rmG{\mathrm{G}}
\def\rmH{\mathrm{H}}
\def\rmI{\mathrm{I}}
\def\rmE{\mathrm{E}}
\def\rmF{\mathrm{F}}

\def\rmM{\mathrm{M}}
\def\rmN{\mathrm{N}}

\def\rmX{\mathrm{X}}
\def\rmXbar{\overline{\rmX}}
\def\rmY{\mathrm{Y}}
\def\rmYbar{\overline{\rmY}}
\def\rmU{\mathrm{U}}
\def\rmV{\mathrm{V}}

\def\rmR{\mathrm{R}}
\def\rmS{\mathrm{S}}
\def\rmT{\mathrm{T}}
\def\rmZ{\mathrm{Z}}

\def\bfW{\mathbf{W}}

\newcommand{\Iw}{\mathrm{Iw}}

\newcommand{\Gr}{\mathbf{Gr}}

\include{thebibliography}

\begin{document}

\title[Level raising and Diagonal cycles]
{Arithmetic level raising on triple product of Shimura curves and Gross--Kudla--Schoen Diagonal cycles II: Bipartite Euler system}

\author{Haining Wang }
\address{\parbox{\linewidth}{address:\\Shanghai Center for Mathematical Sciences,\\ Fudan University,\\No,2005 Songhu Road,\\Shanghai,200438, China.~ }}
\email{wanghaining1121@outlook.com}

\begin{abstract}
In this article, we study the Gross--Kudla--Schoen diagonal cycle on the triple product of Shimura curves at a place of good reduction and prove an unramified arithmetic level raising theorem for the cohomology of this triple product. We deduce from it  a reciprocity law which relates the image of the diagonal cycle under the Abel--Jacobi map to certain period integral of Gross--Kudla type. Combing this with the first reciprocity law we proved in a previous work, we show that the Gross--Kudla--Schoen diagonal cycles along with the Gross--Kudla periods form a bipartite Euler system for the symmetric cube motive of a modular form. As an application we provide some evidence for the rank one case of the Bloch--Kato conjecture for the symmetric cube motive of a modular form.
\end{abstract}

\subjclass[2000]{Primary 14G10,  11G18}
\date{\today}

\maketitle
\tableofcontents
\section{Introduction}
In a seminal work of Bertolini--Darmon \cite{BD}, the authors constructed a Kolyvagin type Euler system using Heegner points on various Shimura curves. The cohomology classes in this system satisfy beautiful reciprocity laws that resemble the so-called Jochnowitz's congruence. More precisely, these reciprocity laws relate the Kummer images of the Heegner points to certain toric period integrals via level-raising congruences. One can consider these Heegner point classes as the incarnations of the first derivatives of certain Rankin--Selberg $L$-functions with global root number $-1$ at the central critical points in light of the Gross--Zagier type formula while the toric period integrals are the algebraic parts of the central critical values of the Rankin--Selberg type $L$-functions as shown by the Gross' formula. The reciprocity laws alluded above are reflections of the congruences between the algebraic parts of the first derivatives of certain Rankin--Selberg $L$-functions with global root number $+1$ and  the algebraic parts of the central critical values of the Rankin--Selberg type $L$-functions observed by Jochnowitz using computational method. Using these reciprocity laws,  Bertolini--Darmon are able to verify the one-sided divisibility of the Iwasawa main conjecture for an elliptic curve over the anticyclotomic extension of an imaginary quadratic field. The method of Bertolini--Darmon is axiomatized by \cite{How} and is given the name of a bipartite Euler system. In \cite{How}, it was also mentioned that the cycle on the triple product of Shimura curves studied in \cite{GK} and \cite{GS} known as the Gross--Kudla--Schoen diagonal cycle might form another example of such a bipartite Euler system. The main theme of this article is to investigate Howard's conjecture for the Gross--Kudla--Schoen diagonal cycle.

The present article completes what was started in \cite{Wang} where we verified one of the reciprocity laws (the first reciprocity law). We will show below that the Gross--Kudla--Schoen diagonal cycles and the Gross--Kudla periods on the triple product of Shimura curves satisfy all the reciprocity laws needed to form a bipartite Euler system for the triple product motive of modular forms. However these cycles do not give a bipartite Euler system in the strict sense of \cite{How} for obvious rank reasons.  On the other hand, when we consider the symmetric cube component of the triple tensor product motive, then the image of these Gross--Kudla--Schoen diagonal cycles in the symmetric cube component do form a bipartite Euler system. Using this bipartite Euler system,  we provide some evidence towards the rank one case of the Bloch--Kato conjecture for the symmetric cube motive of a modular form. A refinement of the method in this article should also be able to handle the more general case of the triple product motive of modular forms. We decide to treat this in another occasion where we also plan to extend the results here to an Iwasawa theoretic setting.

Note that the Gross--Kudla--Schoen diagonal cycle is also studied extensively using the $p$-adic method. This was initiated in \cite{DR-1} and \cite{DR-2} where certain $p$-adic reciprocity laws in the style of Perrin-Riou are proved. The reciprocity laws in this article can be viewed as the counter part of those in \cite{DR-1}  and \cite{DR-2} in the level raising situation. We will refer to \cite{BDRSV} and \cite{Ca-H} for further development and arithmetic applications in this direction. It also should be possible to combine the results in this article with the $p$-adic methods to prove interesting Iwasawa theoretic results as was done in \cite{BD}. In a parallel direction, Liu and Tian \cite{Liu-HZ}, \cite{Liu-cubic} and \cite{LT} have constructed the desired bipartite Euler system for the triple product motive realized on certain Hilbert modular threefold and in a recent breakthrough, Liu, Tian, Xiao, Zhang and Zhu have constructed a bipartite Euler system for the rankin-selberg motive of certain unitary groups in \cite{LTX, LTXZZ}.

\subsection{Main results} In order to state our main results, we introduce some notations. Let $\ell\geq 5$ be a prime. Let $\triplef=(f_{1}, f_{2}, f_{3})$ be a triple of normalized newforms in $S^{\new}_{2}(\Gamma_{0}(N))^{3}$. Throughout this article, we assume that we have a factorization $N=N^{+}N^{-}$ such that $(N^{+}, N^{-})=1$ and $N^{-}$ is square-free with \emph{even} number of prime factors. To give a uniform exposition, we give proofs only in the case when $N^{-}\neq 1$. This convenient assumption puts us in the situation where all the Shimura curves considered in this article are proper, but it should be clear to the reader that there is no difficulty extending the results to the case of non-proper Shimura curves and hence to the case $N^{-}=1$ using the same method.  For $i\in\{1, 2, 3\}$, let $E_{i}=\QQ(f_{i})$ be the Hecke field of $f_{i}$ and $\lambda_{i}$ be a place of $E_{i}$ above the prime $\ell$. We denote by $\calO_{\lambda_{i}}$ the valuation ring of the completion $E_{\lambda_{i}}$ of $E$ at $\lambda_{i}$. Let $\varpi_{i}$ be a uniformizer of  $\calO_{\lambda_{i}}$ and $\lambda_{i}=(\varpi_{i})$ be the maximal ideal. We put $\calO_{\lambda_{i}, n}=\calO_{\lambda_{i}}/\lambda_{i}^{n}$ for any integer $n\geq 1$. Let $k_{i}$ be the residue field of $E_{\lambda_{i}}$. 

We denote by ${\rho}_{f_{i}, \lambda_{i}}: \rmG_{\QQ}\rightarrow \GL_{2}(E_{\lambda_{i}})=\GL(\rmV_{f_{i},\lambda_{i}})$ the $\lambda_{i}$-adic representation attached to $f_{i}$ and whose residual representation will be denoted by $\bar{\rho}_{f_{i}, \lambda_{i}}$.  We consider the following sets: $\Sigma^{+}$ is the set of prime divisors of $N^{+}$; $\Sigma^{-}_{\ram}$ is the set of prime divisor $r$ of $N^{-}$ such that $\ell \mid r^{2}-1$. 
\begin{ass}\label{intro-ass}
We make the following assumptions on $\bar{\rho}_{f_{i}, \lambda_{i}}$ and thus on $\frakm_{i}$:
\begin{enumerate}
\item  $\bar{\rho}_{f_{i}, \lambda_{i}}\vert_{\rmG_{\QQ(\zeta_{\ell})}}$ is absolutely irreducible;
\item  $\bar{\rho}_{f_{i}, \lambda_{i}}$ is minimal at primes in $\Sigma^{+}\cup\Sigma^{-}_{\ram}$ and is ramified at primes in $\Sigma^{-}_{\ram}$;
\item The image of $\bar{\rho}_{f_{i}, \lambda_{i}}$ contains $\GL_{2}(\FF_{\ell})$.
\end{enumerate}
\end{ass}

Here $\bar{\rho}_{f_{i}, \lambda_{i}}$ being minimal at primes in $\Sigma^{+}\cup\Sigma^{-}_{\ram}$ means all the lifts of  $\bar{\rho}_{f_{i}, \lambda_{i}}$ are minimally ramified in the sense of \cite[\S 29]{Maz}. We introduce an auxiliary prime $d$ and assume it is clean with respect to $(\overline{\rho}_{f_{1}, \lambda_{1}}, \overline{\rho}_{f_{2}, \lambda_{2}}, \overline{\rho}_{f_{3}, \lambda_{3}})$ in the sense of Definition \ref{clean}.  This auxiliary prime is used to make the moduli problems associated to the Shimura curves in this article representable. Let $B=B_{N^{-}}$ be the indefinite quaternion algebra of discriminant $N^{-}$. Let $\rmX_{d}$ be the Shimura curve over $\QQ$ associated to $B$ with a level structure given by an Eichler order of level $N^{+}$ and an auxiliary level structure at $d$. Let $p$ be a prime away from $Nd$, $\overline{B}=B_{pN^{-}}$ be the definite quaternion algebra over $\QQ$ and $\rmZ_{d}(\overline{B})$ be the Shimura set associated to $\overline{B}$ with a level structure given by an Eichler order of level $N^{+}$ and an auxiliary level structure at $d$. 

For a finite set of primes $S$ away from $Nd$, let $\TT^{[S]}$ be the Hecke algebra containing all the Hecke operators away from $Nd$ and all the primes in $S$. If $S$ is empty, then we will simply denote $\TT^{[S]}$ by $\TT$. The Hecke eigensystem of $f_{i}$ gives rise to a morphism $\phi_{i}: \TT\rightarrow\calO_{\lambda_{i}}$ whose reduction modulo $\lambda^{n}_{i}$ will be denoted by $\phi_{i, n}$. The kernel of this morphism will be denoted by $\frakp_{i, n}$ and  $\frakp_{i, 1}$ will be denoted by $\frakm_{i}$. We will denote by $\frakm_{\triplef}$ the triple of maximal ideals $(\frakm_{1}, \frakm_{2}, \frakm_{3})$ and by $\frakp_{\triplef,n}$ the triple of ideals given by $(\frakp_{1, n}, \frakp_{2, n}, \frakp_{3, n})$.  The triple tensor product Hecke algebras $\TT\otimes\TT\otimes \TT$  acts naturally on the \'etale cohomology of $\rmX^{3}_{d}$ by the K\"{u}nneth formula. Let $\calO_{\undlamb}=\calO_{{\lambda_{1}}}\otimes \calO_{{\lambda_{2}}}\otimes \calO_{{\lambda_{3}}}$ and $\calO_{\undlamb, n}=\calO_{\lambda_{1}, n}\otimes \calO_{\lambda_{2}, n}\otimes \calO_{\lambda_{3}, n}$.  We consider the $\calO_{\underline{\lambda}}[\rmG_{\QQ}]$-module 
\begin{equation*}
\rmM(\triplef, d)(-1)= \rmH^{3}(\rmX^{3}_{d}\otimes\Qbar, \calO_{\undlamb}(2))_{\frakm_{\triplef}}
\end{equation*}
and the $\calO_{\underline{\lambda}, n}[\rmG_{\QQ}]$-module
\begin{equation*}
\rmM_{n}(\triplef, d)(-1) = \rmH^{3}(\rmX^{3}_{d}\otimes {\Qbar}, \calO_{\undlamb}(2))_{/\frakp_{\triplef, n}}.
\end{equation*}

We introduce the following notion of an $n$-admissible prime for the triple $\triplef$. Such a  prime is a common level raising prime for the triple $\triplef$.
\begin{intro-definition}
A prime $p$ is $n$-admissible for the triple $\triplef=(f_{1}, f_{2}, f_{3})$ if
\begin{enumerate}
\item $p\nmid N\ell$;
\item $\ell\nmid p^{2}-1$;
\item $\varpi^{n}_{i}\mid p+1-\epsilon_{p, i}a_{p}(f_{i})$ with $\epsilon_{p,i}\in \{\pm1\}$ for $i\in\{1, 2, 3\}$;
\item $\epsilon_{p, 1}\epsilon_{p, 2}\epsilon_{p, 3}=1$.
\end{enumerate}
\end{intro-definition}
Let $p$ be an $n$-admissible prime for $\triplef$. Let $\frakm^{[p]}_{i}=\frakm_{i}\cap \TT^{[p]}$ and $\frakp^{[p]}_{i, n}=\frakp_{i,n}\cap \TT^{[p]}$ for $i\in\{1, 2, 3\}$. Then our first result is the following arithmetic level raising theorem for the triple product of Shimura curves. 
\begin{thm}\label{intro-level-raise}
Let $p$ be an $n$-admissible prime for $\triplef$. We assume that each maximal ideal in the triple $\frakm_{\triplef}=(\frakm_{1}, \frakm_{2}, \frakm_{3})$ satisfies Assumption \ref{intro-ass}. Then we have the following isomorphism
\begin{equation*}
\Phi_{\triplef, n}:\threesum(\threetensor\Gamma(\rmZ_{d}(\overline{B}),\calO_{\lambda_{i}})_{/\frakp^{[p]}_{i, n}})\cong\rmH^{1}(\FF_{p}, \rmM_{n}(\triplef, d)(-1))
\end{equation*}
between $\calO_{\undlamb, n}$-modules. 
\end{thm}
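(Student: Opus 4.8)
The plan is to decompose the statement into the arithmetic (Galois cohomology) side and the automorphic (Shimura set) side, and to bridge them via the supersingular uniformization of the integral model together with the unramified arithmetic level raising for a single Shimura curve recalled in the introduction. First I would analyze the target $\rmH^{1}(\FF_{p}, \rmM_{n}(\triplef)(-1))$. Since $p$ is $n$-admissible, each $\rmV_{i}$ restricted to $G_{\QQ_{p}}$ is, modulo $\varpi_{i}^{n}$, an extension involving the unramified characters governed by $\epsilon_{p,i}$, and the local condition $l\nmid p^{2}-1$ guarantees that the relevant Frobenius eigenvalues $p$ and $p^{2}$ are distinct mod $\varpi_{i}$. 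Writing $\rmM_{n}(\triplef)(-1)\cong \otimes_{i=1}^{3}\rmH^{1}(\interX\otimes\QQ^{\ac},\calO_{i}(1))_{/I_{i,n}}$ via Künneth, I would use that under Assumption~\ref{intro-ass}(2) each tensor factor is a rank-$2$ free $\calO_{i,n}$-module on which $\Frob_{p}$ acts through a characteristic polynomial congruent to $(T-\epsilon_{p,i}p)(T-\epsilon_{p,i})\pmod{\varpi_{i}^{n}}$. Computing $\rmH^{1}(\FF_{p},-)=\rmM_{n}/(\Frob_{p}-1)$ (the coinvariants, equivalently invariants, of the unramified Weil group) on the triple tensor product and using $\epsilon_{p,1}\epsilon_{p,2}\epsilon_{p,3}=1$ shows this is free of rank $3$ over $\calO_{\undlamb,n}$: the three summands correspond to the three ways of choosing in which two of the three factors $\Frob_{p}$ contributes the ``$p$'' eigenvalue versus the ``$1$'' eigenvalue, matching the $\oplus_{j=1}^{3}$ on the source.

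Next I would treat the source. By Assumption~\ref{intro-ass}(3) and the argument in \cite{Wang} (multiplicity one for the definite quaternionic forms after localizing at $\frakm_i$), $\Gamma(X^{D},\calO_{E_{\lambda_{i}}})_{/I^{[p]}_{i,n}}$ is free of rank $1$ over $\calO_{i,n}$; hence the triple tensor product is free of rank $1$ over $\calO_{\undlamb,n}$ and the direct sum over $j=1,2,3$ is free of rank $3$, so the two sides at least have the right shape. To construct $\Phi_{\triplef,n}$ itself, I would build it as the triple tensor product (suitably assembled over the three summands) of the single-curve arithmetic level raising isomorphism: the key input is that, for an $n$-admissible prime $p$ for $f_i$, the composite
\begin{equation*}
\Gamma(X^{D},\calO_{E_{\lambda_{i}}})_{/I^{[p]}_{i,n}}\oplus \Gamma(X^{D},\calO_{E_{\lambda_{i}}})_{/I^{[p]}_{i,n}}\xrightarrow{\ \sim\ }\rmH^{1}(\FF_{p^{2}},\rmH^{1}(X_{\FF_{p^2}},\calO_{i}(1))_{/I_{i,n}})
\end{equation*}
coming from the cycle class of the supersingular points is an isomorphism (the ``isomorphism'' refinement of Bertolini--Darmon's surjectivity, as in \cite{LT}), which follows from Assumption~\ref{intro-ass}(2)-(3). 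I would then pass from $\FF_{p^2}$ back to $\FF_p$, keeping track of the two Frobenius eigenvalues, and tensor the three resulting isomorphisms; the bookkeeping of which eigenvalue is selected in each factor produces exactly the threefold direct sum, and $n$-admissibility condition (4) is what makes the tensored Frobenius-invariants nonzero (so the map lands in and exhausts $\rmH^{1}(\FF_p,\rmM_n(\triplef)(-1))$).

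The main obstacle I expect is the transition from the single-curve statement over $\FF_{p^2}$ to the triple-product statement over $\FF_p$, i.e.\ controlling the weight filtration / monodromy on $\rmH^{3}(\interX^{3})$ at the bad prime $p$ and showing that the cycle-class map from the supersingular locus of $X^3$ (which is not simply a product of supersingular loci of the factors — the special fiber of $\interX^3$ at $p$ has a more complicated stratification) induces the claimed isomorphism onto $\rmH^1(\FF_p,\rmM_n(\triplef)(-1))$. Concretely, one must identify the relevant graded piece of the cohomology of $\interX^3\otimes\FF_{p^2}$ under the localization at $\frakm_{\triplef}$ with $\otimes_{i=1}^3\Gamma(X^{D},\calO_{E_{\lambda_i}})_{/I^{[p]}_{i,n}}$ and check that the $\Frob_p$-action is diagonalizable with the expected eigenvalues $p^2, p, p, 1$ (the middle two accounting for the rank drop to $3$), which requires the freeness in Assumption~\ref{intro-ass}(2) to rule out extension classes that would obstruct semisimplicity mod $\varpi_i^n$. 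Once this local-global compatibility at $p$ is in place, the freeness assertions and the isomorphism both follow from Nakayama's lemma and a rank count over $k_i$.
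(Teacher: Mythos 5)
Your skeleton is the same as the paper's: compute $\rmH^{1}(\FF_{p^{2}},\rmM_{n}(\triplef)(-1))$ purely Galois-theoretically using admissibility (each $\rmT_{i,n}=\rmH^{1}(X_{\QQ^{\ac}},\calO_{E_{\lambda_{i}}}(1))_{/I_{i,n}}$ splits as $\calO_{i,n}(1)\oplus\calO_{i,n}$ since $l\nmid p^{2}-1$, giving three tensor-product summands indexed by which factor supplies which eigenvalue, cf.\ Lemma \ref{fin-part} and \eqref{key-isom}), get the rank-one statement for $\Gamma(X^{D},\calO_{E_{\lambda_{i}}})_{/I^{[p]}_{i,n}}$ from Assumption \ref{assump}(3) (Proposition \ref{pq-switch}), build $\Phi_{\triplef,n}$ by tensoring single-curve level-raising isomorphisms factor by factor, and descend from $\FF_{p^{2}}$ to $\FF_{p}$ using $\epsilon_{p,1}\epsilon_{p,2}\epsilon_{p,3}=1$ (Corollary \ref{level-raise-Q}). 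However, your displayed ``key input'' cannot be correct as stated: the source $\Gamma(X^{D},\calO_{E_{\lambda_{i}}})_{/I^{[p]}_{i,n}}\oplus\Gamma(X^{D},\calO_{E_{\lambda_{i}}})_{/I^{[p]}_{i,n}}$ is free of rank $2$ over $\calO_{i,n}$, while $\rmH^{1}(\FF_{p^{2}},\rmT_{i,n})$ is free of rank $1$, so no such isomorphism exists. What the assembly actually needs is \emph{two different rank-one isomorphisms}, one for each Frobenius-eigenvalue piece of $\rmT_{i,n}$: the level-raising isomorphism $\Phi_{i,n}\colon\Gamma(X^{D},\calO_{E_{\lambda_{i}}})_{/I^{[p]}_{i,n}}\cong\rmH^{1}(\FF_{p^{2}},\rmT_{i,n})$ of Theorem \ref{level-raise-curve}, and the \emph{dual} isomorphism $\Phi^{\vee}_{i,n}$ of Corollary \ref{dual-level-raise-curve} (obtained by dualizing the Ihara-lemma diagram, not directly from the cycle class map), which, combined with the fact that $\rmH^{0}(\FF_{p^{2}},\rmT_{i,n}(-1))\rightarrow\rmT_{i,n}(-1)_{/\Frob_{p^{2}}-1}$ is an isomorphism because $l\nmid p^{2}-1$, produces $\Phi^{\heartsuit}_{i,n}\colon\Gamma(X^{D},\calO_{E_{\lambda_{i}}})_{/I^{[p]}_{i,n}}\cong\rmT_{i,n}(-1)_{/\Frob_{p^{2}}-1}$. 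The map $\Phi_{\triplef,n}$ is then the direct sum over $j=1,2,3$ of the tensor products using $\Phi^{\heartsuit}_{j,n}$ in the $j$-th factor and $\Phi_{i,n}$ in the other two, as in \eqref{comp-isom}; your ``two copies'' display conflates these two maps into a single one with the wrong target, and without the dual isomorphism the construction does not close up.

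The obstacle you single out at the end is also misplaced: $p\nmid Nl$ is a prime of \emph{good} reduction for $\interX$, hence for $\interX^{3}$, so $\rmM_{n}(\triplef)(-1)$ is unramified at $p$, $\rmH^{1}(\FF_{p^{2}},-)$ is plain Frobenius coinvariants, and no weight filtration, monodromy, or stratification analysis of the special fiber of $\interX^{3}$ (in particular no study of the supersingular locus of $X^{3}_{\FF_{p^{2}}}$) is needed for this theorem; that geometry only enters later, in the proof of the second reciprocity law, while all the semistable/weight-spectral-sequence and Ihara-lemma work happens for the auxiliary curves $X_{0}(p)$ and $X^{\flat}_{0}(q)$ inside the single-curve Theorem \ref{level-raise-curve}, which you are treating as a black box. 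Relatedly, your eigenvalue bookkeeping ``$p^{2},p,p,1$ with the middle two accounting for the rank drop'' is off: on the rank-$8$ module the relevant decomposition is $\rmM_{n}(\triplef)\cong\calO_{\undlamb,n}\oplus\calO^{\oplus3}_{\undlamb,n}(1)\oplus\calO^{\oplus3}_{\undlamb,n}(2)\oplus\calO_{\undlamb,n}(3)$, and after the $(-1)$-twist the rank-$3$ target comes from the multiplicity-three summand on which Frobenius acts trivially, the other summands dying because $l\nmid p^{2}-1$.
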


We consider the diagonal embedding $\theta: \rmX_{d}\rightarrow \rmX_{d}^{3}$ of $\rmX_{d}$ into its triple fiber product $\rmX^{3}_{d}$. The resulting class $\Delta_{d}=\theta_{*}[\rmX_{d}]\in \mathrm{CH}^{2}(\rmX^{3}_{d})$ will be referred to as the {Gross--Kudla--Schoen diagonal cycle} as in the title of this article. Let $n\geq 1$ be an integer, we will introduce the Abel--Jacobi map
\begin{equation*}
\mathrm{AJ}_{\triplef}: \mathrm{CH}^{2}(\rmX^{3}_{d})\rightarrow \rmH^{1}(\QQ,  \rmM(\triplef, d)(-1))
\end{equation*} 
for  $\rmM(\triplef, d)(-1)$ and the Abel--Jacobi map 
\begin{equation*}
\mathrm{AJ}_{\triplef, n}: \mathrm{CH}^{2}(\rmX^{3}_{d})\rightarrow \rmH^{1}(\QQ,  \rmM_{n}(\triplef, d)(-1))
\end{equation*}
for $\rmM_{n}(\triplef, d)(-1)$. We denote by 
\begin{equation*}
\Theta(\triplef, d) \in \rmH^{1}(\QQ,  \rmM(\triplef, d)(-1)) 
\end{equation*}
the image of $\Delta_{d}$ under $\mathrm{AJ}_{\triplef}$ and by 
\begin{equation*}
\Theta_{n}(\triplef, d)\in \rmH^{1}(\QQ,  \rmM_{n}(\triplef, d)(-1)) 
\end{equation*}
the image of $\Delta_{d}$ under $\mathrm{AJ}_{\triplef, n}$. 

We will define a natural bilinear pairing 
\begin{equation*}
 (\hphantom{a}, \hphantom{b}): \threetensor\Gamma(\rmZ_{d}(\overline{B}), \calO_{\lambda_{i}})_{/\frakp^{[p]}_{i, n}}\times  \threetensor\Gamma(\rmZ_{d}(\overline{B}), E_{\lambda_{i}}/\calO_{\lambda_{i}})[\frakp^{[p]}_{i, n}]\rightarrow \calO_{\undlamb, n}
\end{equation*}
induced from the Poincar\'e duality on $\rmZ_{d}(\overline{B})$. The localization $\loc_{p}(\Theta_{n}(\triplef, d))$ of $\Theta_{n}(\triplef, d)$ is an element in $\rmH^{1}(\FF_{p},  \rmM_{n}(\triplef,d)(-1))$ which in turn can be viewed as an element in the space 
\begin{equation*}
\threesum(\threetensor\Gamma(\rmZ_{d}(\overline{B}), \calO_{{\lambda_{i}}})_{/\frakp^{[p]}_{i, n}})
\end{equation*}
via the isomorphism $\Phi_{\triplef, n}$ in the above Theorem. For $j\in\{1, 2, 3\}$, we will denote by 
$$\loc^{(j)}_{p}(\Theta_{n}(\triplef,d))$$ 
the component of $\loc_{p}(\Theta_{n}(\triplef, d))$ in the $j$-th copy of the space 
$\threesum(\threetensor\Gamma(\rmZ_{d}(\overline{B}), \calO_{{\lambda_{i}}})_{/\frakp^{[p]}_{i, n}})$. The following  reciprocity formula relating the Gross--Kudla--Schoen diagonal cycle class $\Theta_{n}(\triplef, d)$ to certain {Gross--Kudla type period integral}. 
%This is usually referred to as the \emph{second reciprocity law} for the diagonal cycle $\Delta_{d}$. Let $\Delta_{d}(\overline{B})=\vartheta_{\ast}\rmZ_{d}(\overline{B})$ be the diagonal cycle for the diagonal embedding $\vartheta: \rmZ_{d}(\overline{B})\rightarrow \rmZ^{3}_{d}(\overline{B})$ of $\rmZ_{d}(\overline{B})$ into its triple product $\rmZ_{d}(\overline{B})^{3}$.
\begin{thm}\label{unram-intro}
Let $p$ be an $n$-admissible prime for $\triplef$. Suppose each $\frakm_{i}$ satisfies Assumption \ref{intro-ass}. Then the following formula
\begin{equation*}
(\loc^{(j)}_{p}(\Theta_{n}(\triplef,d)), \phi_{1}\otimes\phi_{2}\otimes\phi_{3})=\sum_{z\in \Delta_{d}(\overline{B})} \phi_{1}(z)\otimes \phi_{2}(z)\otimes \phi_{3}(z)
\end{equation*}
holds for every $\phi_{1}\otimes \phi_{2}\otimes \phi_{2}\in \threetensor\Gamma(\rmZ_{d}(\overline{B}), E_{\lambda_{i}}/\calO_{{\lambda_{i}}})[\frakp^{[p]}_{i, n}]$ and $j\in\{1, 2, 3\}$. 
\end{thm}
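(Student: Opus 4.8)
The plan is to establish the finer assertion that, under the isomorphism $\Phi_{\triplef,n}$ of Theorem~\ref{intro-level-raise}, each component $\loc^{(j)}_p(\Theta_n)$ is represented by the \emph{diagonal element}
\begin{equation*}
\Delta_{X^D}:=\sum_{z\in X^D}\delta_z\otimes\delta_z\otimes\delta_z\ \in\ \otimes^{3}_{i=1}\Gamma(X^D,\calO_{E_{\lambda_i}})_{/I^{[p]}_{i,n}},
\end{equation*}
where $\delta_z$ denotes the indicator function of $z\in X^D$, independently of $j=1,2,3$. Granting this, the theorem follows at once: the pairing $(\ ,\ )$ of \S 3.3 is induced by $\langle\delta_z,\phi\rangle=\phi(z)$, so $(\loc^{(j)}_p(\Theta_n),\phi_1\otimes\phi_2\otimes\phi_3)=\sum_{z\in X^D}\phi_1(z)\otimes\phi_2(z)\otimes\phi_3(z)$. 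Thus everything reduces to computing the localization of the Gross--Schoen cycle class and matching it with $\Delta_{X^D}$.

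First I would reduce the computation to the special fibre at $p$. Since $p\nmid Nl$, the cycle $\theta_*[\interX\otimes\QQ]$ is the generic fibre of the integral cycle $\theta_*[\interX]\in\Ch^2(\interX^3)$, so $\loc_p(\Theta_n)$ lies in the finite (unramified) part $\rmH^1(\FF_p,\rmM_n(\triplef)(-1))$ and is computed, via the cycle class formalism and the weight spectral sequence for $\interX^3$ at $p$, from the reduction of $\theta_*[\interX]$ to $X^3_{\FF_{p^2}}$. I then recall the two local inputs. On the one hand, the supersingular locus of $X_{\FF_{p^2}}$ is the finite set $X^D$ (with $D=D_{pN^-}$), and the \emph{unramified arithmetic level raising for the Shimura curve} --- available because each $\frakm_i$ satisfies Assumption~\ref{intro-ass}(3), and in the refined ``isomorphism'' form --- identifies the Abel--Jacobi map on the supersingular points of $X_{\FF_{p^2}}$, for each $i$, with an isomorphism from $\Gamma(X^D,\calO_{E_{\lambda_i}})_{/I^{[p]}_{i,n}}$ onto the relevant ``singular'' constituent of the local cohomology of the $\frakm_i$-part of $\rmH^1(X,\calO_i(1))$ at $p$. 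On the other hand, $n$-admissibility together with $l\nmid p^2-1$ forces the mod $\varpi^n_i$ restriction to $G_{\QQ_p}$ of each two-dimensional space $\rmH^1(X_{\Qbar},\calO_i(1))_{/\frakm_i}$ (two-dimensionality being Assumption~\ref{intro-ass}(2)) to split into a line on which Frobenius acts by $\epsilon_{p,i}$ and a line on which it acts by $\epsilon_{p,i}p$. Assembling the three factors by the K\"unneth formula, and using condition $(4)$ ($\epsilon_{p,1}\epsilon_{p,2}\epsilon_{p,3}=1$) to see that exactly the ``mixed'' constituents survive in $\rmH^1(\FF_p,-)$, one obtains the decomposition of $\rmH^1(\FF_p,\rmM_n(\triplef)(-1))$ into three summands indexed by the distinguished factor $j$ whose ``singular'' constituent occurs; and $\Phi_{\triplef,n}$ on the $j$-th summand is, by construction, the tensor product of the curve-level level-raising isomorphism of factor $j$ with the identifications of the remaining two constituents with $\Gamma(X^D,\calO_{E_{\lambda_i}})_{/I^{[p]}_{i,n}}$, all passing through the Jacquet--Langlands realization of $f_i$ on functions on $X^D$ and the specialization $\Spec\FF\hookrightarrow X_{\FF_{p^2}}$ of supersingular points.

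With these identifications in hand, the heart of the matter is a local computation at the supersingular points of $\interX^3$. The diagonal $\theta$ restricts, on special fibres, to the diagonal $X_{\FF_{p^2}}\hookrightarrow X^3_{\FF_{p^2}}$, which meets the triple supersingular locus $X^D\times X^D\times X^D$ in exactly the small diagonal $\{(z,z,z):z\in X^D\}$, each component reduced and of intersection multiplicity one --- here $l\geq 5$ guarantees that the relevant automorphism groups are trivial, so there are no stacky corrections. Pushing this through the weight spectral sequence --- equivalently, through the compatibility of $\mathrm{AJ}_{\triplef,n}$ localized at $p$ with the cup products defining the K\"unneth projectors and with the curve-level Abel--Jacobi maps --- one finds that the $j$-th component of $\loc_p(\Theta_n)$ is the image of $\sum_{z\in X^D}\delta_z\otimes\delta_z\otimes\delta_z$: the supersingular point $z$ is seen on factor $j$ through its Abel--Jacobi class on the supersingular locus (giving $\delta_z$ via the curve-level isomorphism) and on each of the two factors $i\neq j$ through its class in the good-reduction cohomology, which is again $\delta_z$ under the identification above. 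This is $\Delta_{X^D}$ for every $j$, and the theorem follows by pairing with $\phi_1\otimes\phi_2\otimes\phi_3$.

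The step I expect to be the main obstacle is the compatibility invoked in the last paragraph: making precise, after localization at $p$, the interaction of $\mathrm{AJ}_{\triplef,n}$ with the K\"unneth decomposition and tracking the diagonal cycle through the monodromy--weight filtration on $\rmH^3(\interX^3\otimes\Qbar_p,\ZZ_l(2))$, so that the local intersection of $\theta_*[\interX]$ with the triple supersingular locus is translated faithfully into $\Delta_{X^D}$. This is the technical core to which the remainder of \S 3 is devoted, and it is where Assumptions~\ref{intro-ass}(2) and (3) and the bound $l\nmid p^2-1$ are all essential.
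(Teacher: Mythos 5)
Your argument is correct and essentially coincides with the paper's proof: both reduce, using the good reduction of $\interX^{3}$ at $p$, to the cycle class of the reduced diagonal on $X^{3}_{\FF_{p^{2}}}$, identify each component $\loc^{(j)}_{p}(\Theta_{n})$ under $\Phi_{\triplef, n}$ with the diagonal element $\sum_{z\in X^{D}}\delta_{z}\otimes\delta_{z}\otimes\delta_{z}$ (which the paper writes as $\vartheta_{*}\mathbf{1}_{D}$, using that each constituent of $\Phi_{\triplef, n}$ is built from the curve-level cycle class maps), and conclude by pairing. The only cosmetic difference is that your appeal to a weight spectral sequence and monodromy filtration for $\interX^{3}$ at $p$ is unnecessary since there is good reduction there; the semistable/monodromy input enters only through the curve-level level raising via $X_{0}(p)$, which is exactly how the paper organizes the argument as well.
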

This result is the main technical result of this article and its proof consists of  two main ingredients. The first one is a simultaneous refinement of the level raising result of \cite{Ri-ICM} and the arithmetic level raising result of \cite{BD}, see Proposition \ref{new-old-part} . The second one is the study of the monodromy filtration of the neaby cycle cohomology of the triple product of Shimura curves with Iwahori level structure treated in \S $5$.

Next we consider a pair of $n$-admissible primes $(p, q)$ for $\triplef$. Consider the indefinite quaternion algebra $B^{\natural}$ of discriminant $N^{-}pq$. Then we can associate to it a Shimura curve over $\QQ$ denoted by $\rmX^{\natural}_{d}$. 
We have the triple $\frakm^{[pq]}_{\triplef}=(\frakm^{[pq]}_{1},\frakm^{[pq]}_{2}, \frakm^{[pq]}_{3})$ for $\frakm^{[pq]}_{i}=\TT^{[pq]}\cap \frakm_{i}$ and the triple $\frakp^{[pq]}_{\triplef,n}=(\frakp^{[pq]}_{1, n},\frakp^{[pq]}_{2,n}, \frakp^{[pq]}_{3,n})$ for $\frakp^{[pq]}_{i,n}=\frakp_{i, n}\cap\TT^{[pq]}$. We  consider the natural diagonal morphism 
\begin{equation*}
\theta^{\natural}: \rmX^{\natural}_{d}\rightarrow \rmX^{\natural3}_{d}
\end{equation*}
of $\rmX^{\natural}_{d}$ into the triple fiber product $\rmX^{\natural3}_{d}$. We define the $\calO_{\undlamb, n}[\rmG_{\QQ}]$-module $\rmM^{[{pq}]}_{n}(\triplef, d)$ by 
\begin{equation*}
\rmM^{[{pq}]}_{n}(\triplef, d)=\rmH^{3}(\rmX^{\natural3}_{d}\otimes{\QQ^{\ac}}, \calO_{\undlamb}(2))_{/\frakp^{[pq]}_{\triplef, n}}.
\end{equation*}
There is
an Abel--Jacobi map 
\begin{equation*}
\mathrm{AJ}^{[pq]}_{\triplef, n}: \mathrm{CH}^{2}(\rmX^{\natural3}_{d})\rightarrow \rmH^{1}(\QQ,  \rmM^{[pq]}_{n}(\triplef,d)(-1))
\end{equation*}
for $\rmM^{[pq]}_{n}(\triplef,d)(-1)$ constructed similarly for that of $\rmM_{n}(\triplef, d)$. We denote by 
\begin{equation*}
\Theta^{[pq]}_{n}(\triplef, d)\in \rmH^{1}(\QQ,  \rmM^{[pq]}_{n}(\triplef, d)(-1)) 
\end{equation*}
the image of the diagonal cycle $\Delta^{\natural}_{d}=\theta^{\natural}_{*}[\rmX^{\natural}_{d}]$ under $\mathrm{AJ}^{[pq]}_{\triplef, n}$. The main results of our previous work \cite[Theorem 4.7]{Wang} provide an isomorphism 
\begin{equation*}
\threesum(\threetensor\Gamma(\rmZ_{d}(\overline{B}),\calO_{\lambda_{i}})_{/\frakp^{[p]}_{i, n}})\cong\rmH^{1}_{\sing}(\QQ_{q},  \rmM^{[pq]}_{n}(\triplef, d)(-1))
\end{equation*}
under which
\begin{equation}\label{rami-reci}
 (\partial^{(j)}_{q}\Theta^{[pq]} _{n}(\triplef, d), \phi_{1}\otimes \phi_{2}\otimes \phi_{3})=(q+1)^{3}\sum_{z\in \Delta_{d}(\overline{B})} \phi_{1}(z)\otimes \phi_{2}(z)\otimes \phi_{3}(z)
 \end{equation} 
 for any $\phi_{1}\otimes \phi_{2}\otimes \phi_{3}\in \threetensor\Gamma(\rmZ_{d}(\overline{B}),\calO_{\lambda_{i}})[\frakp^{[p]}_{i, n}]$ and $j\in\{1, 2, 3\}$. Here $\partial_{q}\Theta^{[pq]} _{n}(\triplef, d)$ is the singular residue of $\Theta^{[pq]} _{n}(\triplef, d)$ at $q$ and $\partial^{(j)}_{q}\Theta^{[pq]} _{n}(\triplef, d)$ is the $j$-th component of $\partial_{q}\Theta^{[pq]} _{n}(\triplef, d)$ in $\threesum(\threetensor\Gamma(Z_{d}(\overline{B}),\calO_{\lambda_{i}}){/\frakp^{[p]}_{i, n}})$. And we define the {Gross--Kudla period} $\mathbf{I}(\phi_{1}, \phi_{2}, \phi_{3})$  by 
 \begin{equation*}
 \mathbf{I}(\phi_{1}, \phi_{2}, \phi_{3})=\sum_{z\in \Delta_{d}(\overline{B})} \phi_{1}(z)\otimes \phi_{2}(z)\otimes \phi_{3}(z).
 \end{equation*}
Combing Theorem \ref{unram-intro} and \eqref{rami-reci}, we arrive at the following equation
\begin{equation*}
\begin{aligned}
(\partial^{(j)}_{q}\Theta^{ [pq]} _{n}(\triplef, d), \phi_{1}\otimes \phi_{2}\otimes \phi_{3})&=(q+1)^{3}\mathbf{I}(\phi_{1}, \phi_{2}, \phi_{3})=(q+1)^{3}(\loc^{(j)}_{p}(\Theta_{n}(\triplef, d)), \phi_{1}\otimes\phi_{2}\otimes\phi_{3}).\\
\end{aligned}
\end{equation*}
This shows that $(\Theta_{n}(\triplef, d), \Theta^{[pq]}_{n}(\triplef, d))$ along with the Gross--Kudla periods $\mathbf{I}(\phi_{1}, \phi_{2}, \phi_{3})$ form a {bipartite Euler system} in a weaker sense: these classes do satisfy all the required reciprocity laws, however the singular and the finite part are both of rank $3$ as opposed to of rank $1$ as required by the definition in \cite{How}.

\subsection{The symmetric cube motive}  Consider the case when $\triplef=(f, f, f)$ for a single newform $f\in S^{\mathrm{new}}_{2}(\Gamma_{0}(N))$ with $N=N^{+}N^{-}$ such that $(N^{+}, N^{-})=1$ and $N^{-}$ is square-free with {even} number of prime factors. Let $\rmV_{f, \lambda}=\rmV$ be the representation space of  the Galois representation $\rho_{f, \lambda}$ attached to $f$ and $\rmV({\triplef})=\rmV^{\otimes 3}$ be the triple tensor product representation. Then we have the following factorization
\begin{equation*}
\rmV({\triplef})(-1)=\mathrm{Sym}^{3}\rmV(-1)\oplus \rmV\oplus \rmV
\end{equation*}
and we refer to $\rmV^{\diamond}(\triplef)(-1):=\mathrm{Sym}^{3}\rmV(-1)$ as the symmetric cube component of $\rmV({\triplef})(-1)$.  The triple product $L$-function $L(f\otimes f\otimes f, s)$ factors accordingly as 
\begin{equation*}
L(f\otimes f\otimes f, s)=L(\mathrm{Sym}^{3}f, s)L(f, s-1)^{2}.
\end{equation*}
 We project the class  $\Theta(\triplef, d)\in \rmH^{1}(\QQ,  \rmV(\triplef)(-1))$ which is the image of the diagonal cycle $\Delta_{d}=\theta_{*}[\rmX_{d}]$ under the Abel-Jacobi map
\begin{equation*}
 \mathrm{AJ}_{\triplef, \QQ}: \mathrm{CH}^{2}(\rmX^{3}_{d})\rightarrow \rmH^{1}(\QQ,  \rmV(\triplef)(-1))
\end{equation*}
to its symmetic cube component and we obtain thus a class $\Theta^{\diamond}(\triplef, d)\in \rmH^{1}(\QQ, \rmV^{\diamond}({\triplef})(-1))$. In light of the conjectural Gross--Zagier formula for the triple product $L$-function and the conjectural injectivity of the Abel--Jacobi map in this setting, the class  $\Theta^{\diamond}(\triplef, d)$ should be considered as an algebraic incarnation of the first derivative $L^{\prime}(\mathrm{Sym}^{3}f, s)$ at $s=2$ and $L(f, 1)$ is non-vanishing.  Using the reciprocity laws proved in this article and in \cite{Wang}, we prove the following theorem towards the rank $1$ case of the Bloch--Kato conjecture for the symmetric cube motive of the modular form $f$ at the end of this article. 
\begin{thm}\label{intro-rank-1}
Suppose that the modular form $f$ satisfies the following assumptions.
\begin{enumerate}
\item The residual Galois representation $\bar{\rho}_{f, \lambda}\vert_{\rmG_{\QQ(\zeta_{\ell})}}$ is absolutely irreducible;
\item The residual Galois representation $\bar{\rho}_{f, \lambda}$ is minimal at primes in $\Sigma^{+}\cup\Sigma^{-}_{\ram}$. Moreover $\bar{\rho}_{f, \lambda}$ is ramified at primes in $\Sigma^{-}_{\ram}$;
\item The image of $\overline{\rho}_{f, \lambda}$ contains $\GL_{2}(\FF_{\ell})$.
\end{enumerate}
If the class $\Theta^{\diamond}(\triplef,d)\in \rmH^{1}(\QQ, \rmV^{\diamond}({\triplef})(-1))$ is non-zero, then the symmetric cube Bloch--Kato Selmer group 
\begin{equation*}
\rmH^{1}_{f}(\QQ, \mathrm{V}^{\diamond}(\triplef)(-1)) 
\end{equation*}
is of dimension $1$ over $E_{\lambda}$.
\end{thm}

\subsection{Notations and conventions} We will use common notations and conventions in algebraic number theory and algebraic geometry. The cohomologies appeared in this article will be understood as the \'{e}tale cohomologies. For a field $K$, we denote by $K^{\ac}$ the separable closure of $K$ and put $\rmG_{K}:=\Gal(K^{\ac}/K)$ the absolute Galois group of $K$. We let $\Adel$ be the ring of ad\`{e}les over $\QQ$ and $\Adel^{\infty}$ be the subring of finite ad\`{e}les.  For a prime $p$, $\Adel^{(\infty p)}$ is the prime-to-$p$ part of  $\Adel^{\infty}$. 

When $K$ is a local field, we denote by $\calO_{K}$ its valuation ring  and by $k$ its residue field. We let $\rmI_{K}$ be the inertia subgroup of $\rmG_{K}$. For a $\rmG_{K}$-module $\rmM$,  we have the following exact sequence of Galois cohomology groups
\begin{equation}\label{fin-sing}
0\rightarrow \rmH^{1}_{\mathrm{fin}}(K, \rmM)\rightarrow \rmH^{1}(K, \rmM)\xrightarrow{\partial} \rmH^{1}_{\sing}(K, \rmM)\rightarrow 0  
\end{equation}
where  $\rmH^{1}_{\mathrm{fin}}(K, \rmM)=\rmH^{1}(k, \rmM^{\rmI_{K}})$
is called the \emph{unramified} or the \emph{finite} part of the cohomology group $\rmH^{1}(K, \rmM)$ and $\rmH^{1}_{\mathrm{sin}}(K, \rmM)$ is defined as the quotient of $\rmH^{1}(K, \rmM)$ by its finite part is called the \emph{singular quotient} of  $\rmH^{1}(K, \rmM)$. The natural quotient map $\rmH^{1}(K, \rmM)\xrightarrow{\partial} \rmH^{1}_{\sing}(K, \rmM)$ will be referred to as the \emph{singular quotient map}. The element $\partial(x)$ will be referred to as the \emph{singular residue} of $x$ for  $x\in \rmH^{1}(K, \rmM)$. Let $K$ be a number field and $v$ be a place of $K$, suppose $\rmM$ is a $\rmG_{K}$-module, then we call the natural map $\mathrm{loc}_{v}: \rmH^{1}(K, \rmM)\rightarrow \rmH^{1}(K_{v}, \rmM)$ the localization map. The composition of the localization map $\mathrm{loc}_{v}$ with the singular quotient map $\partial$ will be denoted by $\partial_{v}$. The symbol ${\emptyset}$ on the subscript or the superscript means we omit the subscript or the superscript. For example $\rmM_{\emptyset}=\rmM$.

\section{Review of weight spectral sequence}
\subsection{Nearby cycles on semi-stable schemes} Let $K$ be a henselian discrete valuation field with residue field $k$ of characteristic $p$ and valuation ring $\calO_{K}$. We fix a uniformizer $\pi$ of $\calO_{K}$. We set $S=\Spec\calO_{K}$, $s=\Spec k$ and $\eta=\Spec K$. Let $K^{\ac}$ be a separable closure of $K$ and $K_{\ur}$ the maximal unramified extension of $K$ in $K^{\ac}$. We denote by $k^{\ac}$ the residue field of $K_{\ur}$. 
Let $\rmI_{K}=\Gal(K^{\ac}/K_{\ur})\subset \rmG_{K}=\Gal(K^{\ac}/K)$ be the inertia group. Let $\ell$ be a prime different from $p$. We set $t_{\ell}: \rmI_{K}\rightarrow \ZZ_{\ell}(1)$ to be the canonical map given by 
\begin{equation*}
\sigma \mapsto (\sigma(\pi^{1/\ell^{m}})/\pi^{1/\ell^{m}})_{m}
\end{equation*} 
for every $\sigma\in \rmI_{K}$. 

Let $\mathfrak{X}$ be a \emph{strict semi-stable scheme} over $S$ purely of relative dimension $n$ which is also assumed to be proper. This means that $\mathfrak{X}$ is locally of finite presentation and Zariski locally \'{e}tale over $$\Spec(\calO_{K}[X_{1}, \cdots, X_{n}]/(X_{1}\cdots X_{r}-\pi))$$ for some integer $1\leq r\leq n$. We let $\overline{\rmX}$ be the special fiber of $\mathfrak{X}$ and $\overline{\rmX}\otimes{k^{\ac}}$ be its base-change to $k^{\ac}$. Let $\rmX=\mathfrak{X}_{\eta}$ be the generic fiber of $\mathfrak{X}$ and $\rmX\otimes{K_{\ur}}$ be its base-change to $K_{\ur}$. Let $i:\overline{\rmX}\rightarrow \mathfrak{X}$, $j: \rmX\rightarrow \mathfrak{X}$ be the natural inclusions of the special fiber and the generic fiber. Let $\bar{i}: \overline{\rmX}\otimes{k^{\ac}}\rightarrow \mathfrak{X}\otimes{\calO_{K_{\ur}}}$ and $\bar{j}: \rmX\otimes{K_{\ur}}\rightarrow \mathfrak{X}\otimes{\calO_{K_{\ur}}}$ be the base change of $i$ and $j$ to $\calO_{K^{\ur}}$. 

Let $\Lambda$ be a finite extension of $\ZZ_{\ell}$ or $\ZZ/\ell^{v}$. We have the  nearby cycle sheaf 
\begin{equation*}
\rmR^{q}\Psi(\Lambda)= \bar{i}^{*}\rmR^{q}\bar{j}_{*}\Lambda
\end{equation*}
and the {nearby cycle complex} 
\begin{equation*}
\rmR\Psi(\Lambda)= \bar{i}^{*}\rmR\bar{j}_{*}\Lambda.
\end{equation*}
We regard the latter as an object in the derived category $\rmD^{+}(\rmXbar\otimes{k^{\ac}}, \Lambda[\rmI_{K}])$ of sheaves of $\Lambda$-modules with continuous $\rmI_{K}$-actions. By the proper base change theorem, we always have an isomorpshim
\begin{equation*}
\rmH^{*}(\rmXbar\otimes{k^{\ac}}, \rmR\Psi(\Lambda))\cong\rmH^{*}(\rmX\otimes{K^{\ac}}, \Lambda).
\end{equation*} 

Let $\rmD_{1},\cdots, \rmD_{m} $ be the set of irreducible components of $\overline{\rmX}$. For each index set $I\subset \{1, \cdots, m\}$ of cardinality $p$, we set $\overline{\rmX}_{I}=\cap_{i\in I} \rmD_{i}$. This is a smooth scheme of dimension $n-p$. For $1\leq p \leq m-1$, let
\begin{equation}
\overline{\rmX}^{(p)}=\bigsqcup_{I\subset \{1, \cdots, m\}, \mathrm{Card}(I)=p+1} \overline{\rmX}_{I}
\end{equation}
 and 
 \begin{equation}
 a_{p}: \overline{\rmX}^{(p)}\rightarrow \overline{\rmX}
 \end{equation}
 be the projection, we have $a_{p *}\Lambda=\wedge^{p+1}a_{0 *}\Lambda$. Consider the Kummer exact sequence in the case $\Lambda=\ZZ/\ell^{v}$
 \begin{equation}
 0\rightarrow \Lambda(1)\rightarrow \calO^{\times}_{\overline{\rmX}} \rightarrow \calO^{\times}_{\overline{\rmX}}\rightarrow 0.
 \end{equation}
 Let $\partial(\pi)\in i^{*}\rmR^{1}j_{*}\Lambda(1) $ be the image of $\pi$ under the coboundary map by applying $i^{*}\rmR j_{*}$ to the above exact sequence. We let $\theta: \Lambda_{\rmXbar}\rightarrow i^{*}\rmR^{1}j_{*}\Lambda(1)$ be the map sending $1$ to $\partial(\pi)$ and $\delta:\Lambda_{\rmXbar}\rightarrow a_{0*}\Lambda$ be the canonical map. 
\begin{proposition}\label{nearby-cycle-q}
We have the following statements.
\begin{enumerate}
\item There is an isomorphism of exact sequences 
\begin{equation*}
\begin{tikzcd}
\Lambda_{\rmXbar}\arrow{r}{\delta}\arrow{d}&a_{0*}\Lambda\arrow{r}{\delta\wedge}\arrow{d}&\cdots\arrow{r}{\delta\wedge}\arrow{d}&a_{n*}\Lambda\arrow{r}\arrow{d}&0\\
\Lambda_{\rmXbar}\arrow{r}{\theta}&i^{*}\rmR^{1}j_{*}\Lambda(1)\arrow{r}{\theta\cup}&\cdots\arrow{r}{\theta\cup}&i^{*}\rmR^{n+1}j_{*}\Lambda(n+1)\arrow{r}&0.\\
\end{tikzcd}
\end{equation*}
\item For $p\geq 0$, we have an exact sequence
\begin{equation*}
\begin{tikzcd}
\rmR^{p}\Psi(\Lambda)\arrow{r}{\theta\cup}&i^{*}\rmR^{p+1}j_{*}\Lambda(1)\arrow{r}{\theta\cup}&\cdots\arrow{r}{\theta\cup}&i^{*}\rmR^{n+1}j_{*}\Lambda(n+1-p)\arrow{r}&0.\\
\end{tikzcd}
\end{equation*}
\item For $p\geq 0$, we have a quasi-isomorphism of complexes
\begin{equation*}
\rmR^{p}\Psi(\Lambda)[-p]\xrightarrow{\sim} [a_{p*}\Lambda(-p)\xrightarrow{\delta\wedge}\cdots\xrightarrow{\delta\wedge} a_{n*}\Lambda(-p)\rightarrow0].
\end{equation*}
\end{enumerate}
\end{proposition}
\begin{proof}
The first two statements are taken from  \cite[Corollary 1.3]{Saito} and the last one is an immediate consequence of the first two. 
\end{proof}
\subsection{Monodromy filtration and spectral sequence} Suppose $A$ is an object in an abelian category and $N$ is a nilpotent endomorphism on $A$. We define two filtrations on $A$:
\begin{itemize}
\item  the kernel filtration $\rmF_{\bullet}$ by putting $\rmF_{p}A=\ker(N^{p+1}: A\rightarrow A)$ for $p\geq 0$;
\item  the image filtration $\rmG^{\bullet}$ by putting $\rmG^{q}A=\im(N^{q}: A\rightarrow A)$ for $q>0$.
\end{itemize}
Using these two filtrations,  we define the convolution filtration $\rmM_{\bullet}A$ by 
\begin{equation*}
\rmM_{r}A=\oplus_{p-q=r}\rmF_{p}A\cap \rmG^{q}A.
\end{equation*} 

To calculate the graded piece of the convolution filtration, we define the induced $\rmG^{\bullet}$-filtration on the graded piece of $\rmF_{\bullet}$ by $\rmG^{q}\Gr^{\rmF}_{p}A=\im(\rmG^{q}A\cap \rmF_{p}A\rightarrow \Gr^{\rmF}_{p}A)$.  Then we see that the $r$-th graded piece of the convolution filtration $\rmM_{\bullet}A$ is given by
\begin{equation*}
\Gr^{\rmM}_{r}A\cong\bigoplus_{p-q=r}\Gr^{q}_{\rmG}\Gr^{\rmF}_{p}A.
\end{equation*}
The convolution filtration in this case is known as the \emph{Monodromy filtration} and it is characterized by
\begin{enumerate}
\item $\rmM_{-n}A=A$ and $\rmM_{n+1}A=0$.
\item $N: A\rightarrow A$ sends $\rmM_{r}A$ into $\rmM_{r+2}A$ for $r\in\ZZ$.
\item $N^{r}: \Gr^{\rmM}_{-r}A\rightarrow \Gr^{\rmM}_{r}A$ is an isomoprhism. 
\end{enumerate}

Let $A=\rmR\Psi(\Lambda)$ now. Let $T$ be an element in $\rmI_{K}$ such that $t_{l}(T)$ is a generator of $\Lambda(1)$ then $T$ induces a nilpotent operator $T-1$ on $\rmR\Psi(\Lambda)$. Let $N=(T-1)\otimes\breve{T}$ where $\breve{T}\in \Lambda(-1)$ be the dual of $t_{l}(T)$. Then with respect to $N$, we have the following characterization of the Monodromy filtration on $\rmR\Psi(\Lambda)$
\begin{enumerate}
\item $\rmM_{-n}\rmR\Psi(\Lambda)=\rmR\Psi(\Lambda)$ and $\rmM_{n+1}\rmR\Psi(\Lambda)=0$.
\item $N: \rmR\Psi(\Lambda)(1)\rightarrow \rmR\Psi(\Lambda)$ sends $\rmM_{r}\rmR\Psi(\Lambda)(1)$ into $\rmM_{r+2}\rmR\Psi(\Lambda)$ for $r\in\ZZ$.
\item $N^{r}: \Gr^{\rmM}_{-r}\rmR\Psi(\Lambda)(r)\rightarrow \Gr^{M}_{r}\rmR\Psi(\Lambda)$ is an isomoprhism. 
\end{enumerate}
We can use Proposition \ref{nearby-cycle-q} to calculate the monodromy filtration on $\rmR\Psi(\Lambda)$. In addition to Proposition \ref{nearby-cycle-q}, we need the following results in \cite[Lemma 2.5, Corollary 2.6]{Saito}. 
\begin{enumerate}
\item The kernel filtration $\rmF_{p}\rmR\Psi(\Lambda)$ is given by the canonical truncated filtration $\tau_{\leq p}\rmR\Psi(\Lambda)$ and therefore 
\begin{equation*}
\Gr^{F}_{p}\rmR\Psi(\Lambda)\cong \rmR^{p}\Psi(\Lambda)[-p]\cong [a_{p*}\Lambda(-p)\xrightarrow{\delta\wedge}\cdots\xrightarrow{\delta\wedge} a_{n*}\Lambda(-p)\rightarrow0].
\end{equation*}
\item The image filtration $\rmG^{q}\Gr^{\rmF}_{p}\rmR\Psi(\Lambda)$ on $\Gr^{\rmF}_{p}\rmR\Psi(\Lambda)$ is given by the truncation in Proposition \ref{nearby-cycle-q} $(3)$ to $p+q$ position
\begin{equation*}
\rmG^{q}\Gr^{F}_{p}\rmR\Psi(\Lambda)= [a_{p+q*}\Lambda(-p)\xrightarrow{\delta\wedge}\cdots\xrightarrow{\delta\wedge} a_{n*}\Lambda(-p)\rightarrow0].
\end{equation*}
\item Combining the above two results, we have 
\begin{equation*}
\Gr^{q}_{\rmG}\Gr^{\rmF}_{p}\rmR\Psi(\Lambda)=a_{p+q*}\Lambda[-p-q](-p)
\end{equation*}
and therefore we arrive at the following equation 
\begin{equation}\label{Graded-M}
\Gr^{\rmM}_{r}\rmR\Psi(\Lambda)=\bigoplus_{p-q=r}a_{p+q*}\Lambda[-p-q](-p).
\end{equation}
\end{enumerate}
The monodromy filtration induces the \emph{Rapoport--Zink spectral sequence} \cite{RZ} or the \emph{weight spectral sequence}
\begin{equation}\label{wt-seq}
\rmE^{p,q}_{1}= \rmH^{p+q}(\rmXbar\otimes{k^{\ac}}, \Gr^{\rmM}_{-p}\rmR\Psi(\Lambda))\Rightarrow \rmH^{p+q}(\rmXbar\otimes{k^{\ac}}, \rmR\Psi(\Lambda))\cong\rmH^{p+q}(\rmX\otimes{K^{\ac}}, \Lambda).
\end{equation}
The $\rmE_{1}$-term of this spectral sequence can be made explicit by
\begin{equation*}
\begin{aligned}
\rmH^{p+q}(\rmXbar\otimes{k^{\ac}}, \Gr^{\rmM}_{-p}\rmR\Psi(\Lambda))&=\bigoplus_{i-j=-p, i\geq0, j\geq0}\rmH^{p+q-(i+j)}(\rmXbar^{(i+j)}\otimes{k^{\ac}}, \Lambda(-i))\\
&=\bigoplus_{i\geq\mathrm{max}(0, -p)}\rmH^{q-2i}(\rmXbar^{(p+2i)}\otimes{k^{\ac}}, \Lambda(-i)).\\
\end{aligned}
\end{equation*}
We call the induced filtration on $\rmH^{\ast}(\rmXbar\otimes{k^{\ac}}, \rmR\Psi(\Lambda))$ by this spectral sequence the \emph{monodromy filtration} of $\rmH^{\ast}(\rmXbar\otimes{k^{\ac}}, \rmR\Psi(\Lambda))$.

\subsection{Examples in dimension $1$ and $3$} We will make the first page of the weight spectral sequence explicit in dimension $1$ and dimension $3$ which are the only cases that will be used in the computations later. To shorten the notation, we will sometimes write $\rmH^{*}(a_{p*}\Lambda)$ instead of 
\begin{equation*}
\rmH^{*}(\rmXbar\otimes{k^{\ac}}, a_{p*}\Lambda)=\rmH^{*}(\rmXbar^{(p)}\otimes{k^{\ac}}, \Lambda).
\end{equation*}
\subsubsection{The one dimensional case}\label{1-dim}\label{one-mono}: Let $\mathfrak{X}$ be a relative curve over $\Spec(\calO_{K})$. Then we immediately calculate that
\begin{equation}
\begin{aligned}
&\Gr^{\rmM}_{-1}\rmR\Psi(\Lambda)=a_{1*}\Lambda[-1], \\
&\Gr^{\rmM}_{0}\rmR\Psi(\Lambda)=a_{0*}\Lambda,  \\
&\Gr^{\rmM}_{1}\rmR\Psi(\Lambda)=a_{1*}\Lambda[-1](-1). \\
\end{aligned}
\end{equation}
The $\rmE_{1}$-page of the weight spectral sequence is given by
\begin{center}
\begin{tikzpicture}[thick,scale=0.9, every node/.style={scale=0.9}]
  \matrix (m) [matrix of math nodes,
    nodes in empty cells,nodes={minimum width=5ex,
    minimum height=5ex,outer sep=-5pt},
    column sep=1ex,row sep=1ex]{
                &      &     &     & \\
          2     &  \rmH^{0}(a_{1*}\Lambda)(-1) &  \rmH^{2}(a_{0*}\Lambda)  & & \\
          1     &       & \rmH^{1}(a_{0*}\Lambda) &    & \\
          0     &    & \rmH^{0}(a_{0*}\Lambda) &  \rmH^{0}(a_{1*}\Lambda) &\\
    \quad\strut &   -1  &  0  &  1  & \strut \\};
\draw[thick] (m-1-1.east) -- (m-5-1.east) ;
\draw[thick] (m-5-1.north) -- (m-5-5.north) ;
\end{tikzpicture}
\end{center}
and it clearly degenerates at the $\rmE_{2}$-page.  We therefore have the monodromy filtration 
\begin{equation*}
0\subset^{\rmE^{1,0}_{2}} \rmM_{1}\rmH^{1}(\rmXbar\otimes{k^{\ac}}, \rmR\Psi(\Lambda))\subset^{\rmE^{0,1}_{2}} \rmM_{0}\rmH^{1}(\rmXbar\otimes{k^{\ac}}, \rmR\Psi(\Lambda))\subset^{\rmE^{-1,2}_{2}} \rmM_{-1}\rmH^{1}(\rmXbar\otimes{k^{\ac}},\Psi(\Lambda))
\end{equation*}
with graded pieces given by
\begin{equation}\label{curve-grade}
\begin{aligned}
&\Gr^{\rmM}_{-1}\rmH^{1}(\rmXbar\otimes{k^{\ac}}, \rmR\Psi(\Lambda))=\ker[\rmH^{0}(a_{1*}\Lambda(-1))\xrightarrow{\tau} \rmH^{2}(a_{0*}\Lambda)]\\
&\Gr^{\rmM}_{0}\rmH^{1}(\rmXbar\otimes{k^{\ac}}, \rmR\Psi(\Lambda))= \rmH^{1}(a_{0*}\Lambda)\\
&\Gr^{\rmM}_{1}\rmH^{1}(\rmXbar\otimes{k^{\ac}}, \rmR\Psi(\Lambda))= \coker[\rmH^{0}(a_{0*}\Lambda)\xrightarrow{\rho} \rmH^{0}(a_{1*}\Lambda)]\\
\end{aligned}
\end{equation}
where $\tau$ is the generalized {Gysin morphism} and $\rho$ is the generalized {restriction morphism} as in \cite[p. 610]{Saito}. 
Note that we have the following commutative diagram
\begin{equation}\label{picard-lef}
\begin{tikzcd}
\rmH^{1}(\rmXbar\otimes{k^{\ac}},\rmR\Psi (\Lambda(1))) \arrow[r] \arrow[d, "N"] & \ker[\rmH^{0}(a_{1*}\Lambda)\xrightarrow{\tau} \rmH^{2}(a_{0*}\Lambda)(1)] \arrow[d, "N"] \\
\rmH^{1}(\rmXbar\otimes{k^{\ac}}, \rmR\Psi(\Lambda))                  & \coker[\rmH^{0}(a_{0*}\Lambda)\xrightarrow{\rho} \rmH^{0}(a_{1*}\Lambda)] .\arrow[l]
\end{tikzcd}
\end{equation}
In this case, we recover the \emph{Picard--Lefschetz formula} if we identify $\rmH^{0}(a_{1*}\Lambda)(-1)$ with the space of \emph{vanishing cycles} $\underset{x}\oplus\rmR^{1}\Phi_{\{x\}}(\Lambda)$
on $\rmXbar\otimes{k^{\ac}}$ where $x$ runs through the set of singular points $\rmXbar^{(1)}$. 

We explain another way of calculating the monodromy filtration $\rmM_{\bullet}\rmH^{1}(\rmXbar\otimes{k^{\ac}},\rmR\Psi(\Lambda))$
in this case. Consider the distinguished triangle
\begin{equation*}
\Lambda\xrightarrow{sp}\rmR\Psi(\Lambda)\rightarrow \rmR\Phi(\Lambda)\xrightarrow{[+1]} 
\end{equation*}
where $\rmR\Phi(\Lambda)$ is the complex of vanishing cycles and its induced long exact sequence
\begin{equation*}
\rmH^{0}(\rmXbar\otimes k^{\ac}, \rmR\Phi(\Lambda))\rightarrow \rmH^{1}(\rmXbar\otimes k^{\ac},\Lambda)\rightarrow \rmH^{1}(\rmXbar\otimes k^{\ac},\rmR\Psi(\Lambda))\rightarrow \rmH^{1}(\rmXbar\otimes k^{\ac},\rmR\Phi(\Lambda))\rightarrow \rmH^{2}(\rmXbar\otimes k^{\ac}, \Lambda).
\end{equation*}
Since it is well-known that $\rmR\Phi(\Lambda)$ is concentrated in degree one \cite[2.2.5 A.2]{SGA7}, we obtain the exact sequence
\begin{equation*}
0\rightarrow \rmH^{1}(\rmXbar\otimes k^{\ac},\Lambda)\rightarrow \rmH^{1}(\rmXbar\otimes k^{\ac},\rmR\Psi(\Lambda))\rightarrow \rmH^{1}(\rmXbar\otimes k^{\ac},\rmR\Phi(\Lambda))\rightarrow \rmH^{2}(\rmXbar\otimes k^{\ac}, \Lambda).
\end{equation*}
We have a distinguished triangle 
\begin{equation*}
\Lambda\rightarrow a_{0\ast}\Lambda\rightarrow a_{1\ast}\Lambda\xrightarrow{[+1]} 
\end{equation*}
on the \'etale site of $\rmXbar\otimes k^{\ac}$ by Proposition \ref{nearby-cycle-q} (1). Then we obtain from the induced long exact sequence the following exact sequence
\begin{equation}\label{normaliz-exact}
\rmH^{0}(\rmXbar\otimes k^{\ac}, a_{0\ast}\Lambda)\rightarrow \rmH^{0}(\rmXbar\otimes k^{\ac}, a_{1\ast}\Lambda)\rightarrow \rmH^{1}(\rmXbar\otimes k^{\ac}, \Lambda)\rightarrow\rmH^{1}(\rmXbar\otimes k^{\ac}, a_{0\ast}\Lambda)\rightarrow 0
\end{equation}
and an isomorphism $\rmH^{2}(\rmXbar\otimes k^{\ac}, \Lambda)\cong \rmH^{2}(\rmXbar\otimes k^{\ac}, a_{0\ast}\Lambda)$.
Then by comparing with \ref{curve-grade}, it is clear that 
\begin{equation*}
\rmH^{1}(\rmXbar\otimes{k^{\ac}}, \Lambda)=\rmM_{0}\rmH^{1}(\rmXbar\otimes k^{\ac},\rmR\Psi(\Lambda))
\end{equation*}
and we recover the monodromy filtration from the above discussions.  

\subsubsection{Three dimensional case} Let $\mathfrak{X}$ be a relative threefold over $\Spec(\calO_{K})$. We can also easily list the graded pieces of the monodromy filtration on $\rmR\Psi(\Lambda)$ using \ref{Graded-M}
\begin{equation*}
\begin{aligned}
&\Gr^{\rmM}_{-3}\rmR\Psi(\Lambda)=a_{3*}\Lambda[-3], \\
&\Gr^{\rmM}_{-2}\rmR\Psi(\Lambda)=a_{2*}\Lambda[-2], \\
&\Gr^{\rmM}_{-1}\rmR\Psi(\Lambda)=a_{1*}\Lambda[-1]\oplus a_{3*}\Lambda[-3](-1), \\
&\Gr^{\rmM}_{0}\rmR\Psi(\Lambda)=a_{0*}\Lambda\oplus a_{2*}\Lambda[-1](-1),  \\
&\Gr^{\rmM}_{1}\rmR\Psi(\Lambda)=a_{1*}\Lambda[-1](-1)\oplus a_{3*}\Lambda[-3](-2),\\
&\Gr^{\rmM}_{2}\rmR\Psi(\Lambda)=a_{2*}\Lambda[-2](-2), \\
&\Gr^{\rmM}_{3}\rmR\Psi(\Lambda)=a_{3*}\Lambda[-3](-3).\\
\end{aligned}
\end{equation*}
The $\rmE_{1}$-page of the weight spectral sequence is given below

\begin{equation}\label{E1-primitive}
\begin{tikzpicture}[thick,scale=0.65, every node/.style={scale=0.65}]
\matrix (m) [matrix of math nodes,
    nodes in empty cells,nodes={minimum width=5ex,
    minimum height=5ex,outer sep=-5pt},
    column sep=1ex,row sep=1ex]{
                &      &     &     & \\
          6    &\rmH^{0}(a_{3*}\Lambda)(-3) &  \rmH^{2}(a_{2*}\Lambda)(-2)  &\rmH^{4}(a_{1*}\Lambda)(-1) & \rmH^{6}(a_{0*}\Lambda) \\
          5    &    &\rmH^{1}(a_{2*}\Lambda)(-2)  &\rmH^{3}(a_{1*}\Lambda)(-1) &\rmH^{5}(a_{0*}\Lambda)& \\
          4    &       &\rmH^{0}(a_{2*}\Lambda)(-2) &\rmH^{2}(a_{1*}\Lambda)(-1)\oplus\rmH^{0}(a_{3*}\Lambda)(-2) &\rmH^{4}(a_{0*}\Lambda)\oplus\rmH^{2}(a_{2*}\Lambda)(-1)&\rmH^{4}(a_{1*}\Lambda)\\
          3   &        &         &\rmH^{1}(a_{1*}\Lambda)(-1)  &\rmH^{3}(a_{0*}\Lambda)\oplus\rmH^{1}(a_{2*}\Lambda)(-1) &\rmH^{3}(a_{1*}\Lambda)\\
          2   &        &         &\rmH^{0}(a_{1*}\Lambda)(-1)  &\rmH^{2}(a_{0*}\Lambda)\oplus\rmH^{0}(a_{2*}\Lambda)(-1) &\rmH^{2}(a_{1*}\Lambda)\oplus \rmH^{0}(a_{3*}\Lambda)(-1) &\rmH^{2}(a_{2*}\Lambda)\\
           1   &        &         &                             &\rmH^{1}(a_{0*}\Lambda) &\rmH^{1}(a_{1*}\Lambda) &\rmH^{1}(a_{2*}\Lambda)\\
           0   &        &         &       &\rmH^{0}(a_{0*}\Lambda) &\rmH^{0}(a_{1*}\Lambda) &\rmH^{0}(a_{2*}\Lambda)& \rmH^{3}(a_{3*}\Lambda)\\
          \quad\strut & -3  &  -2  &  -1  & 0 &1 &2 &3 & \strut \\};

\draw[thick] (m-1-1.east) -- (m-9-1.east) ;
\draw[thick] (m-9-1.north) -- (m-9-9.north) ;
\end{tikzpicture}
\end{equation}
and this spectral sequence will be used in \S 5.

\section{Arithmetic level raising for Shimura curves}
\subsection{Shimura curves and Shimura sets}
Let $N=N^{+}N^{-}$ be a factorization of $N$ such that $N^{-}$ is square-free and has \emph{even} number of prime divisors. Let $p$ be a prime away from $N$. 
Let $B=B_{N^{-}}$ be the indefinite quaternion algebra over $\QQ$ with discriminant $N^{-}$. Let $\calO_{B}$ be a maximal order of $B$. For an integer $M$ divisible by $N^{+}$ and relatively prime to $N^{-}$, let $\calO_{B, M}$ be an Eichler order of level $M$. Let $\rmG$ be the algebraic group over $\QQ$ defined by $B^{\times}$. Let $m$ be a square free integer such that $(m, pN)=1$, we define the open compact subgroup $K_{M, m}\subset \rmG(\Adel^{\infty})$ by
\begin{equation*}
K_{M, m}= \{g=(g_{v})_{v}\in \widehat{\calO}^{\times}_{B, M}: g_{v}\equiv \begin{pmatrix} \ast & \ast\\ 0& 1\\\end{pmatrix}\mod v \text{ for } v\mid m\}.
\end{equation*}
When $m=1$, then $K_{M, m}$ will simply be denoted by $K_{M}$. There is a Shimura curve $\rmX_{m}$ over $\QQ$ whose complex points are are uniformized by the double coset space
\begin{equation*}
\rmX_{m}(\CC)=\rmG(\QQ)\backslash \calH^{\pm} \times \rmG(\Adel^{\infty})/K_{N^{+},m}.
\end{equation*}
If $m=1$, we simply denote $\rmX_{m}$ by $\rmX$.

We define an integral model $\interX_{m}$ of $\rmX_{m}$ over $\ZZ[1/Nm]$ which represents the following functor. 
\begin{definition}\label{Xm}
Let $S$ be a locally Noetherian  scheme over $\ZZ[1/Nm]$. Then $\interX_{m}(S)$ classifies the isomorphism classes of the tuples $(A, \iota, C_{N^{+}}, \alpha_{m})$ where
\begin{enumerate}
\item $A$ is an $S$-abelian scheme of relative dimension $2$;
\item $\iota: \calO_{B}\hookrightarrow \End_{S}(A)$ is an action of $\calO_{B}$ on $A$;
\item $C_{N^{+}}$ is a finite flat subgroup scheme of $A[N^{+}]$ of order $(N^{+})^{2}$ which is stable and locally cyclic under the action of $\calO_{B}$;
\item $\alpha_{m}: (\underline{\ZZ/m})^{2}_{S}\rightarrow A[m]$ is an $\calO_{B}$-equivariant injection of finite flat group schemes over $S$.
\end{enumerate} 
\end{definition}
This functor is representable by a smooth projective scheme denoted also by $\interX_{m}$ over $\ZZ[1/Nm]$ if $m\geq 4$. In particular, if we omit the data $\alpha_{m}$ from the above moduli problem, we will obtain a coarse moduli space $\interX$ over  $\ZZ[1/N]$ which defines an integral model of the Shimura curve $\rmX$. Suppose $m=d$ is a prime.
Let $\pi_{1,d}: \interX_{d}\rightarrow\interX$ be the degeneracy map given by
\begin{equation}\label{deg-d-1}
(A, \iota, C_{N^{+}}, \alpha_{d}) \mapsto (A, \iota, C_{N^{+}})
\end{equation}
and $\pi_{2,d}: \interX_{d}\mapsto\interX$ be the degeneracy map given by 
\begin{equation}\label{deg-d-2}
(A, \iota, C_{N^{+}}, \alpha_{d})\mapsto (A^{\prime}, \iota^{\prime}, C^{\prime}_{N^{+}}) 
\end{equation}
where $A^{\prime}$ is the quotient of $A$ by the subgroup scheme $C_{d}=\alpha_{d}((\underline{\ZZ/d})^{2})\subset A[d]$, $\iota^{\prime}$ is the induced action of $\calO_{B}$ on $A^{\prime}$ and $C^{\prime}_{N^{+}}=C_{N^{+}}\oplus C_{d}/C_{d}$.

In this article, we will often consider the base-change of $\interX_{m}$ to $\ZZ_{p^{2}}$ for some prime $p\nmid Nm$ and we will denote it by the same notation. It is well known that $\interX_{m}$ is smooth and projective of relative dimension $1$ over $\ZZ_{p^{2}}$. The generic fiber of $\interX_{m}$ will simply be denoted by $\rmX_{m}$ and its special fiber will be denoted by $\overline{\rmX}_{m}$. Let $x=(A, \iota,C_{N^{+}}, \alpha_{m})\in \overline{\rmX}_{m}$ be an $\FF^{\ac}_{p}$-point. Then the $p$-divisible group $A[p^{\infty}]$ of $A$ can be written as  $A[p^{\infty}]=E[p^{\infty}]\times E[p^{\infty}]$ for a $p$-divisible group $E[p^{\infty}]$ associated to an elliptic curve $E$ and $\calO_{B}$ acts naturally via $\calO_{B}\otimes \ZZ_{p}=\rmM_{2}(\ZZ_{p})$. Depending on $E[p^{\infty}]$ is {ordinary} or {supersingular}, we will accordingly call $x$ an ordinary or a supersingular point. Let $\overline{\rmX}^{\ss}_{m}$ be the closed sub-scheme given by those points that are supersingular and  $\overline{\rmX}^{\ord}_{m}=\overline{\rmX}_{m}-\overline{\rmX}^{\ss}_{m}$ be its complement. We will refer to $\overline{\rmX}^{\ss}_{m}$ as the {supersingular locus} and to $\overline{\rmX}^{\ord}_{m}$ as the {ordinary locus}.  Let $\overline{B}=B_{pN^{-}}$ be the definite quaternion algebra with discriminant $pN^{-}$ and $\calO_{\overline{B}}$ be a maximal order. We will write $\calO_{\overline{B}, p}=\calO_{\overline{B}}\otimes\ZZ_{p}$ and define $K_{p}=\calO^{\times}_{\overline{B}, p}$. Note that we can naturally view the prime-to-$p$ part  $K^{(p)}_{N^{+},m}$ of $K_{N^{+},m}$ as an open compact subgroup of $\overline{B}^{\times}(\Adel^{(\infty p)})$. The scheme $\overline{\rmX}^{\ss}_{m}$ is given by a finite set of points naturally defined over $\FF_{p^{2}}$ and we have the following parametrization of  $\overline{\rmX}^{\ss}_{m}$. 

\begin{lemma}
We have an isomorphism 
\begin{equation*}
\overline{\rmX}^{\ss}_{m}\cong \overline{B}^{\times}(\QQ)\backslash \overline{B}^{\times}(\Adel^{\infty})/ K_{N^{+},m}.
\end{equation*}
of schemes over $\FF_{p^{2}}$.
\end{lemma}
\begin{proof}
The lemma is well known and can be proved using essentially the same method of the classical work Deuring and Serre. See \cite[Lemma 9]{DT} for example.
\end{proof}
We will consider the Shimura set associated to the definite quaternion algebra $\overline{B}$ with level $K_{N^{+}, m}$ given by
\begin{equation*}
\rmZ_{m}(\overline{B})=\overline{B}^{\times}(\QQ)\backslash \overline{B}^{\times}(\Adel^{\infty})/ K_{N^{+}, m}.
\end{equation*}
The above lemma gives rise to an isomorphism $\overline{\rmX}^{\ss}_{m}\cong \rmZ_{m}(\overline{B})$ and thus gives rise to a moduli interpretation of this $0$-dimensional scheme. When $m=1$, then by convention, $K_{N^{+}, m}$ agrees with $K_{N^{+}}$ and we write $\rmZ_{1}(\overline{B})$ simply by $\rmZ(\overline{B})$. 

Suppose $m=d$ is a prime. Let $\overline{\pi}_{1,d}:\rmZ_{d}(\overline{B})\rightarrow \rmZ(\overline{B})$ be the degeneracy map induced by $\pi_{1, d}$ as in \ref{deg-d-1} and $\overline{\pi}_{2,d}:\rmZ_{d}(\overline{B})\rightarrow \rmZ(\overline{B})$ be the degeneracy map induced by $\pi_{2, d}$ as in \ref{deg-d-2}. See \cite[A.1.2]{Liu-cubic} for more discussions.
Let $R$ be a ring, we write $\Gamma(\rmZ_{m}(\overline{B}), R)=\rmH^{0}(\rmZ_{m}(\overline{B}), R)$ for the space of $R$-valued functions on the set $\rmZ_{m}(\overline{B})$. 

\subsection{Shimura curves with Iwahori level}
We will consider now the curve $\rmX_{m}(p)$ over $\QQ$ whose complex points are given by
\begin{equation*}
\rmX_{m}(p)(\CC)=\rmG(\QQ)\backslash \calH^{\pm} \times \rmG(\Adel^{\infty})/K_{pN^{+},m}.
\end{equation*}
We define an integral model $\interX_{m}(p)$ over $\ZZ[1/Nm]$ which represents the following functor.  Let $S$ be a locally Noetherian scheme over $\ZZ[1/Nm]$. Then $\interX_{m}(p)(S)$ classifies the isomorphism classes of the tuples $(A, \iota, C_{p}, C_{N^{+}}, \alpha_{m})$ where
\begin{enumerate}
\item $A$ is an $S$-abelian scheme of relative dimension $2$;
\item $\iota: \calO_{B}\hookrightarrow \End_{S}(A)$ is an action of $\calO_{B}$ on $A$;
\item $C_{p}$ is a finite flat subgroup scheme of $A[p]$ that is locally free of rank $p^{2}$ and stable under $\calO_{B}$;
\item $C_{N^{+}}$ is a finite flat subgroup scheme of $A[N^{+}]$ of order $(N^{+})^{2}$ that is locally cyclic under the action of $\calO_{B}$;
\item $\alpha_{m}: (\underline{\ZZ/m})^{2}_{S}\rightarrow A[m]$ is an $\calO_{B}$-equivariant injection of finite flat group schemes over $S$.
\end{enumerate}
By \cite[Theorem 4.7]{Buzzard}, $\interX_{m}(p)$ is regular and proper over $\ZZ[1/Nm]$. Again when we consider the base-change of $\interX_{m}(p)$ to $\ZZ_{p^{2}}$, we use the same notation for this base-change. We denote by $\rmX_{m}(p)$ the generic fiber of $\interX_{m}(p)$ and $\overline{\rmX}_{m}(p)$ the special fiber of $\interX_{m}(p)$. We have the following descriptions of $\overline{\rmX}_{m}(p)$ over $\FF_{p^{2}}$. 

\begin{lemma}\label{gamma0p}
The scheme $\overline{\rmX}_{m}(p)$ consists of two irreducible components $\overline{\rmX}_{+,m}$ and $\overline{\rmX}_{-,m}$, both isomorphic to $\overline{\rmX}_{m}$, crossing transversally at the supersingular locus $\overline{\rmX}_{\pm,m}$ of $\overline{\rmX}_{m}(p)$ which can be identified with the supersingular locus of $\overline{\rmX}_{m}$.
\end{lemma}

\begin{proof}
This is well-known and  see \cite[Theorem 4.7(v)]{Buzzard} for a proof of this.
\end{proof}

Let $\pi_{1, p}:\interX_{m}(p)\rightarrow \interX_{m}$ be the degeneracy morphism given by
\begin{equation*}
(A, \iota, C_{p}, C_{N^{+}}, \alpha_{m}) \mapsto (A, \iota, C_{N^{+}}, \alpha_{m})  
\end{equation*}
and  $\pi_{2, p}:\interX_{m}(p)\rightarrow \interX_{m}$ be the degeneracy morphism given by 
\begin{equation*}
(A, \iota, C_{p}, C_{N^{+}}, \alpha_{m}) \mapsto (A/C_{p}, \overline{\iota}, \overline{C}_{N^{+}}, \overline{\alpha}_{m})
\end{equation*}
where $\overline{\iota}$ is induced action of $\calO_{B}$ on $A/C_{p}$ and $\overline{C}_{N^{+}}=C_{N^{+}}\oplus C_{p}/C_{p}$. We also define the \emph{ Atkin--Lehner involution} $w_{p}$ by the recipe
\begin{equation*}
(A, \iota, C_{p}, C_{N^{+}}, \alpha_{m})\mapsto (A/C_{p},  \iota, A[p]/C_{p}, \overline{C}_{N^{+}}, \overline{\alpha}_{m}). 
\end{equation*}

We can define two closed immersions $i_{1}: \overline{\rmX}_{+,m}\rightarrow \overline{\rmX}_{m}(p)$ and $i_{2}: \overline{\rmX}_{-,m}\rightarrow \overline{\rmX}_{m}(p)$ as in the proof of \cite[Theorem 4.7(v)]{Buzzard} such that  $\pi_{1}\circ i_{1}$ is the identity and $\pi_{2}\circ i_{1}$ is the Frobenius automorphism on the underlying point.

We will need the following result known as  the Ihara's lemma. This is proved for the case of classical modular curves by Ribet \cite{Ri-ICM} and Diamond--Taylor \cite{DT} for Shimura curves.  Recall that a Hecke module is \emph{Eisenstein} if its support consists of Eisenstein maximal ideals. A maximal ideal is Eisenstein if its associated Galois representation is reducible. 
\begin{theorem}[Ihara's lemma] \label{DT}
We have the following statements.
\begin{enumerate}
\item The kernel of the pull-back map
\begin{equation*}
(\pi^{*}_{1, p}+\pi^{*}_{2, p}): \rmH^{1}(\rmX_{m}\otimes \Qbar, k_{\lambda})^{\oplus 2} \rightarrow \rmH^{1}(\rmX_{m}(p)\otimes{\Qbar}, k_{\lambda})
\end{equation*}
is Eisenstein.
\item The cokernel of the push-forwad map
\begin{equation*}
(\pi_{1, p, *}, \pi_{2, p, *}): \rmH^{1}(\rmX_{m}(p)\otimes{\Qbar}, k_{\lambda})\rightarrow \rmH^{1}(\rmX_{m}\otimes{\Qbar}, k_{\lambda})^{\oplus 2}
\end{equation*}
is Eisenstein.
\end{enumerate}
\end{theorem}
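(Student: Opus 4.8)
The plan is to deduce Ihara's lemma by the method of Ihara and Ribet, as adapted to Shimura curves by Diamond and Taylor. It suffices to prove that for every \emph{non-Eisenstein} maximal ideal $\frakm$ of the prime-to-$p$ Hecke algebra the localization at $\frakm$ of the kernel in (1) vanishes, together with the dual statement for the cokernel in (2). The idea is to translate the assertion into a computation with the cohomology of $S$-arithmetic subgroups of $B^{\times}$, to use the action of these groups on the Bruhat--Tits tree of $\GL_{2}(\QQ_{p})$, and then to invoke the congruence subgroup property. \emph{Step 1 (passage to group cohomology).} Over $\CC$, the curves $X$ and $X_{0}(p)$ are the quotients $\Gamma_{1}\backslash\calH^{\pm}$ and $\Gamma_{0}(p)\backslash\calH^{\pm}$, where $\Gamma_{1}$ and $\Gamma_{0}(p)$ are the unit groups of Eichler orders of levels $N^{+}$ and $pN^{+}$ in $B$. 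After localizing at a non-Eisenstein $\frakm$ the contributions of the elliptic points disappear, so $\rmH^{1}(X_{\Qbar},\FF_{l})_{\frakm}\cong\rmH^{1}(\Gamma_{1},\FF_{l})_{\frakm}$ and $\rmH^{1}(X_{0}(p)_{\Qbar},\FF_{l})_{\frakm}\cong\rmH^{1}(\Gamma_{0}(p),\FF_{l})_{\frakm}$, compatibly with the Hecke action away from $p$; under these identifications $\pi_{1}^{*}$ and $\pi_{2}^{*}$ become the two restriction maps attached to the inclusions $\Gamma_{0}(p)\hookrightarrow\Gamma_{1}$ and $\Gamma_{0}(p)\hookrightarrow\Gamma_{2}:=u\Gamma_{1}u^{-1}$, where $u\in B^{\times}(\QQ_{p})$ has $\ord_{p}\det u=1$, so that $\Gamma_{0}(p)=\Gamma_{1}\cap\Gamma_{2}$. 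Adelically, $\Gamma_{1}$ and $\Gamma_{2}$ stabilize the two endpoints of an edge of the Bruhat--Tits tree $\mathcal{T}$ of $\GL_{2}(\QQ_{p})$, and $\Gamma_{0}(p)$ stabilizes that edge.

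\emph{Steps 2--3 (the amalgam at $p$ and Mayer--Vietoris).} Because $p\nmid N^{-}$, $B$ splits at $p$; let $\tilde\Gamma\subset B^{\times}(\QQ)$ be the subgroup of elements that are integral with respect to the chosen level away from $p$, acting on $\mathcal{T}$ via its image in $\PGL_{2}(\QQ_{p})$. Strong approximation for the norm-one subgroup of $B$ (a simply connected group, noncompact at $p$) shows that $\tilde\Gamma\backslash\mathcal{T}$ is a single edge with vertex stabilizers $\Gamma_{1},\Gamma_{2}$ and edge stabilizer $\Gamma_{0}(p)$; that is, $\tilde\Gamma\cong\Gamma_{1}\ast_{\Gamma_{0}(p)}\Gamma_{2}$ (this is Ihara's theorem; when the pertinent class number exceeds $1$ one gets a finite graph of groups and argues componentwise). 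Part of the Mayer--Vietoris sequence of this amalgam reads
\[
\rmH^{1}(\tilde\Gamma,\FF_{l})\longrightarrow\rmH^{1}(\Gamma_{1},\FF_{l})\oplus\rmH^{1}(\Gamma_{2},\FF_{l})\xrightarrow{\ \res\,-\,\res\ }\rmH^{1}(\Gamma_{0}(p),\FF_{l}),
\]
and after negating the second summand the middle arrow becomes $(a,b)\mapsto\pi_{1}^{*}a+\pi_{2}^{*}b$. Hence the kernel of $\pi_{1}^{*}+\pi_{2}^{*}$ is identified, equivariantly for the Hecke operators away from $p$, with the image of $\rmH^{1}(\tilde\Gamma,\FF_{l})$.

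\emph{Step 4 (congruence subgroup property), and part (2).} The group $\tilde\Gamma$ is $S$-arithmetic in the inner form $B^{\times}/\QQ^{\times}$ of $\PGL_{2}$ over $\QQ$, with $S=\{\infty,p\}$ and $S$-rank $\geq 2$ (rank $1$ at $\infty$, since $B\otimes\RR\cong M_{2}(\RR)$, and rank $1$ at $p$). By the congruence subgroup property (Serre, and its analogue for such inner forms), every finite-index subgroup of $\tilde\Gamma$ is a congruence subgroup, so $\rmH^{1}(\tilde\Gamma,\FF_{l})=\Hom(\tilde\Gamma^{\mathrm{ab}},\FF_{l})$ factors through the finite congruence quotients of $\tilde\Gamma$, whose abelianizations are controlled by the reduced norm into $\varprojlim_{M}(\ZZ/M)^{\times}$. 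Consequently every Hecke eigensystem occurring in $\rmH^{1}(\tilde\Gamma,\FF_{l})$ satisfies $T_{q}\equiv\psi(q)(1+q)$ for almost all $q$ and a finite-order character $\psi$, i.e. it is Eisenstein; thus $\rmH^{1}(\tilde\Gamma,\FF_{l})_{\frakm}=0$ for non-Eisenstein $\frakm$, and by Steps 1--3 we get $\ker(\pi_{1}^{*}+\pi_{2}^{*})_{\frakm}=0$. This proves (1). For (2), Poincaré duality on $\rmH^{1}(X_{\Qbar},\FF_{l})$ and $\rmH^{1}(X_{0}(p)_{\Qbar},\FF_{l})$ makes $\pi_{i*}$ adjoint to $\pi_{i}^{*}$ up to the prime-to-$p$ diamond operator, which is invertible; hence the cokernel of $(\pi_{1*},\pi_{2*})$ is Hecke-dual to the kernel of $\pi_{1}^{*}+\pi_{2}^{*}$, and since the Eisenstein condition is self-dual, (2) follows from (1).

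\emph{Main obstacle.} The crux is Step 4: the Mayer--Vietoris argument by itself only identifies the kernel with the image of $\rmH^{1}$ of a large $S$-arithmetic group, and it is precisely the congruence subgroup property that forces this cohomology to factor through the abelian reduced-norm congruence quotients, hence to be Eisenstein. The remaining points — torsion in the $\Gamma_{i}$ when $l$ is small, the centre of $\tilde\Gamma$, and class number $>1$ requiring a graph of groups in place of a single edge — never meet the non-Eisenstein part and are routine bookkeeping.
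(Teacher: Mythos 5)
This statement is not proved in the paper at all: it is quoted verbatim from Diamond--Taylor and used as a black box (the paper's ``proof'' is the citation to \cite[Theorem 4]{DT}), so the meaningful comparison is with Diamond--Taylor's actual argument. Your Steps 1--3 are the classical Ihara--Ribet mechanism (unit groups of the two Eichler orders as the two vertex stabilizers of an edge of the Bruhat--Tits tree at $p$, the amalgam $\Gamma_{1}\ast_{\Gamma_{0}(p)}\Gamma_{2}$, and Mayer--Vietoris identifying $\ker(\pi_{1}^{*}+\pi_{2}^{*})$ with the image of $\rmH^{1}(\tilde\Gamma,\FF_{l})$), and they are fine as far as they go; the duality reduction of (2) to (1) is also standard.

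The genuine gap is Step 4. In the setting of this paper $N^{-}$ is a nontrivial product of an even number of primes, so $B$ is a quaternion \emph{division} algebra and $\tilde\Gamma$ is a cocompact $S$-arithmetic lattice in an \emph{anisotropic} inner form of $\mathrm{SL}_{2}$ (equivalently, one is dealing with $\mathrm{SL}_{1}(D)$ for $D$ a quaternion division algebra). The congruence subgroup property you invoke is known for the isotropic case $\mathrm{SL}_{2}(\ZZ[1/p])$ (Mennicke, Serre), but for $S$-arithmetic subgroups of $\mathrm{SL}_{1}(D)$ it is a long-standing open problem: anisotropic groups of type $A_{1}$ are exactly the case excluded from all known CSP results, and there is no ``analogue for such inner forms'' available to cite. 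This is precisely why Ribet's proof of Ihara's lemma does not extend from modular curves to Shimura curves, and why Diamond--Taylor prove Theorem 4 by an entirely different route --- working with integral models of the Shimura curve with (Drinfeld) level structure at $p$, the mod-$l$ representation theory of $\GL_{2}(\FF_{p})$ acting on the cover $X(p)\rightarrow X$, and an analysis of the bad reduction --- rather than through the cohomology of the $S$-arithmetic group; it is also why later generalizations (e.g.\ to totally real fields) required new methods such as patching. As written, your argument proves the statement only when $B=M_{2}(\QQ)$, i.e.\ for classical modular curves, which is exactly the case excluded here; the ``main obstacle'' you flag is not routine bookkeeping but an open problem, so the proposal does not establish the theorem.
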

\begin{proof}
This is proved in \cite[Theorem 2]{DT}.
\end{proof}

\subsection{Unramified level raising for Shimura curves} Let $f$ be a normalized newform in $S^{\mathrm{new}}_{2}(\Gamma_{0}(N))$. Let $E=\QQ(f)$ be the Hecke field of $f$ and  $\lambda$ be a place of $E$ above $\ell$. We denote by $\calO_{\lambda}$ the valuation ring of $E_{\lambda}$ and by $k_{\lambda}$ its residue field. Let $\varpi$ be a uniformizer of $\calO_{\lambda}$ and $\lambda=(\varpi)$ be the maximal ideal. 
Then by the construction of Eichler--Shimura, we can attach a Galois representation
\begin{equation*}
\rho_{f, \lambda}: \rmG_{\QQ}\rightarrow \GL_{2}({E_{\lambda}})=\GL(\rmV_{f,\lambda})
\end{equation*}
to $f$ such that $\tr\rho_{f,\lambda}(\Frob_{p})=a_{p}(f)$ where $a_{p}(f)$ is the Fourier coefficient of $f$ at $p$ for every $p\nmid N$ and such that the determinant $\det(\rho_{f, \lambda})=\epsilon_{\ell}$ is the $\ell$-adic cyclotomic character.  Let $\overline{\rho}_{f, \lambda}: \rmG_{\QQ}\rightarrow \GL_{2}(k_{\lambda})$
be the residual representation of $\rho_{f, \lambda}$. We fix an auxiliary prime $m=d\geq 4$ considered in the last subsection.

For a finite set $S$ of primes away from $Nd$, we also sometimes denote the product of primes in this set by $S$.  Let $\TT^{[S]}$ be the abstract Hecke algebra unramified away from $SNd$. This means $\TT^{[S]}$ is the restricted tensor product of the spherical Hecke algebra $\TT_{v}$ for $\GL_{2}(\QQ_{v})$ with $v\nmid Nd$. The spherical Hecke algebra $\TT_{v}$ is generated by the operators $\rmT_{v}=[\GL_{2}(\ZZ_{v})\begin{pmatrix}v& 0\\ 0&1\\ \end{pmatrix}\GL_{2}(\ZZ_{v})], \rmS^{\pm 1}_{v}=[\GL_{2}(\ZZ_{v})\begin{pmatrix}v& 0\\ 0&v\\ \end{pmatrix}\GL_{2}(\ZZ_{v})]^{\pm 1}$. If $S$ is the empty set $\emptyset$, then $\TT^{[S]}$ will simply be denoted by $\TT$. The Hecke eigensystem of $f$ gives rise to a map
$\phi_{f}: \TT \rightarrow \calO_{\lambda}$ sending $\rmT_{v}$ to $a_{v}(f)$ and sending $\rmS_{v}$ to $1$ for $v\nmid Nd$.  For any integer $n\geq1$, we define the following ideals of  $\TT^{[S]}$ attached to $f$ by
\begin{equation}\label{ideals}
\begin{aligned}
&\frakm^{[S]}=\mathrm{ker}[\TT \xrightarrow{\phi_{f}} \calO_{\lambda}\rightarrow \calO_{\lambda}/\lambda]\cap\TT^{[S]}\\ 
&\frakp^{[S]}_{n}=\mathrm{ker}[\TT\xrightarrow{\phi_{f}}\calO_{\lambda}\rightarrow \calO_{\lambda}/\lambda^{n}]\cap\TT^{[S]}.
\end{aligned}
\end{equation}
We say $\frakm$ is absolutely irreducible if $\overline{\rho}_{f,\lambda}$ is absolutely irreducible. If $S$ is the empty set $\emptyset$, then we will omit the appearance of $S$ from all the notations.

\begin{definition}\label{clean}
 We say the auxiliary prime $d\nmid N\ell$ is \emph{clean} for $\overline{\rho}_{f,\lambda}$ if the degeneracy maps $(\pi_{1,d}, \pi_{2,d})$ on $\rmX_{d}$ induces an isomorphism
\begin{equation*}
(\pi_{1,d, \ast}, \pi_{2,d, \ast}): \rmH^{1}(\rmX_{d}\otimes\QQ^{\ac},\calO_{\lambda})_{/\frakm}\xrightarrow{\sim}  \rmH^{1}(\rmX\otimes{\QQ^{\ac}},\calO_{\lambda})^{\oplus 2}_{/\frakm}
\end{equation*}
and the degeneracy maps $(\overline{\pi}_{1,d}, \overline{\pi}_{2,d})$ on $\rmZ_{d}(\overline{B})$ induces an isomorphism
\begin{equation*}
(\overline{\pi}_{1,d, \ast}, \overline{\pi}_{2,d, \ast}): \Gamma(\rmZ_{d}(\overline{B}),\calO_{\lambda})_{/\frakm^{[p]}}\xrightarrow{\sim}  \Gamma(\rmZ(\overline{B}),\calO_{\lambda})^{\oplus 2}_{/\frakm^{[p]}}.
\end{equation*}
\end{definition}

\begin{remark}
It is easy to find clean primes for $\overline{\rho}_{f,\lambda}$. Let $\rmX_{0}(d)$ be the Shimura curve by adding an  Iwahori level at $d$ to $\rmX$. Then by Ihara's lemma, 
\begin{equation*}
\rmH^{1}(\rmX_{0}(d)\otimes\QQ^{\ac},\calO_{\lambda})_{/\frakm}\rightarrow  \rmH^{1}(\rmX\otimes{\QQ^{\ac}},\calO_{\lambda})^{\oplus 2}_{/\frakm} 
\end{equation*}
is surjective. Moreover the natural map 
\begin{equation*}
\rmH^{1}(\rmX_{d}\otimes\QQ^{\ac},\calO_{\lambda})_{/\frakm}\rightarrow\rmH^{1}(\rmX_{0}(d)\otimes\QQ^{\ac},\calO_{\lambda})_{/\frakm} 
\end{equation*}
is surjective as $\rmH^{1}(\rmX_{0}(d)\otimes\QQ^{\ac},\calO_{\lambda})_{/\frakm}$ can be identified with the fixed part of $\rmH^{1}(\rmX_{d}\otimes\QQ^{\ac},\calO_{\lambda})_{/\frakm}$ under $(\ZZ/d)^{\times}$. Therefore the map
\begin{equation*}
(\pi_{1,d,\ast}, \pi_{2,d,\ast}): \rmH^{1}(\rmX_{d}\otimes\QQ^{\ac},\calO_{\lambda})_{/\frakm}\rightarrow \rmH^{1}(\rmX\otimes{\QQ^{\ac}},\calO_{\lambda})^{\oplus 2}_{/\frakm} 
\end{equation*}
is surjective.  Suppose that $\ker(\pi_{1,d, \ast}, \pi_{2,d, \ast})$ is non-zero, then there exists a $d$-new form congruent to $f$. Then Carayol's classification, see the list above \cite[Theorem A]{DT}, implies that $d^{2}\equiv 1\mod \ell$ or $a_{d}(f)^{2}\equiv(d+1)^{2}\mod \ell$. Then we simply need to avoid these primes. Similar argument works also for 
\begin{equation*}
(\overline{\pi}_{1,d, \ast}, \overline{\pi}_{2,d, \ast}): \Gamma(\rmZ_{d}(\overline{B}),\calO_{\lambda})_{/\frakm^{[p]}}\xrightarrow{\sim}  \Gamma(\rmZ(\overline{B}),\calO_{\lambda})^{\oplus 2}_{/\frakm^{[p]}}.
\end{equation*}
From here on, we will always fix a clean $d$ for $\overline{\rho}_{f,\lambda}$. Furthermore, we will impose the following assumptions on $\overline{\rho}_{f,\lambda}$.
\end{remark}

\begin{assumption}\label{assump}
We make the following assumptions on $\overline{\rho}_{f, \lambda}$ and thus on the maximal ideal $\frakm$:
\begin{enumerate}
\item $\bar{\rho}_{f, \lambda}\vert_{\rmG_{\QQ(\zeta_{\ell})}}$ is absolutely irreducible;
\item $\bar{\rho}_{f, \lambda}$ is minimally ramified at primes in $\Sigma^{+}\cup\Sigma^{-}_{\ram}$ and is ramified at primes in $\Sigma^{-}_{\ram}$;
\item The image of $\overline{\rho}_{f, \lambda}$ contains $\GL_{2}(\FF_{\ell})$.
\end{enumerate}
\end{assumption}

\begin{proposition}\label{multi-one}
Suppose Assumption \ref{assump} holds,  we have an isomorphism 
\begin{equation*}
\rmH^{1}(\rmX\otimes{\QQ^{\ac}},\calO_{\lambda}(1))_{/\frakm}\cong \overline{\rho}_{f, \lambda} 
\end{equation*}
which induces an isomorphism
\begin{equation*}
\rmH^{1}(\rmX_{d}\otimes\QQ^{\ac}, \calO_{\lambda}(1))_{/\frakm}\cong \overline{\rho}^{\oplus 2}_{f, \lambda}.
\end{equation*}
\end{proposition}
\begin{proof}
By \cite{BLR2}, it follows that $\rmH^{1}(\rmX\otimes{\QQ^{\ac}},\calO_{\lambda}(1))_{/\frakm}$ is isomorphic to $\overline{\rho}^{\oplus d_{\frakm}}_{f,\lambda}$ for some $d_{\frakm}\geq 1$. Therefore $\rmH^{1}(\rmX_{d}\otimes\QQ^{\ac},\calO_{\lambda}(1))_{/\frakm}$ is isomorphic to $2d_{\frakm}$ copies of $\overline{\rho}_{f,\lambda}$ by the cleaness of $d$. On the other hand, we know that $\Gamma(\rmZ(\overline{B}),\calO_{\lambda})_{/\frakm}$ is one dimensional over $k_{\lambda}$ under the Assumption \ref{assump} by \cite[Proposition 6.8]{CH-1} by noting that our assumption is stronger than the (CR+) assumption in \cite[Proposition 6.8]{CH-1}. By the same argument of \cite[Lemma 6.4.2]{LTXZZ}, $d_{\frakm}$ is equal to the dimension of $\Gamma(\rmZ(\overline{B}),\calO_{\lambda})_{/\frakm}$ over $k_{\lambda}$ and hence is equal to $1$.
\end{proof}
\begin{remark}\label{clean-cohomology}
Note that $\rmH^{1}(\rmX\otimes{\QQ^{\ac}},\calO_{\lambda}(1))_{\frakm}$ provide a $\rmG_{\QQ}$-stable $\calO_{\lambda}$-lattice $\rho_{\calO_{\lambda}}$ of $\rho_{f, \lambda}$. Then we have 
\begin{equation*}
\rmH^{1}(\rmX_{d}\otimes\QQ^{\ac},\calO_{\lambda}(1))_{\frakm}\cong \rho^{\oplus 2}_{\calO_{\lambda}}
\end{equation*}
by the above proposition and 
\begin{equation*}
\rmH^{1}(\rmX_{d}\otimes\QQ^{\ac},\calO_{\lambda})_{/\frakp_{n}}\cong \rho^{\oplus 2}_{\calO_{\lambda, n}}
\end{equation*}
for the natural reduction $\rho_{\calO_{\lambda, n}}$ of $\rho_{\calO_{\lambda}}$ by $\lambda^{n}$. 
\end{remark}

Let  $\mathrm{cl}: \mathrm{CH}^{1}(\overline{\rmX}_{d})\rightarrow \rmH^{2}(\overline{\rmX}_{d}, \calO_{\lambda}(1))$ be the cycle class map for $\overline{\rmX}_{d}$.
Suppose that $\frakm$ satisfies Assumption \ref{assump}. Using the Hochschild--Serre spectral sequence and the fact that $\rmH^{0}$ and $\rmH^{2}$ of $\overline{\rmX}_{d}\otimes{\FF^{\ac}_{p}}$ are both Eisenstein \cite[Lemma 3]{DT}, we have an isomorphism 
\begin{equation*}
\rmH^{2}(\overline{\rmX}_{d}, \calO_{\lambda}(1))_{/\frakp^{[p]}_{n}}\cong \rmH^{1}(\FF_{p^{2}}, \rmH^{1}(\overline{\rmX}_{d}\otimes\FF^{\ac}_{p}, \calO_{\lambda}(1))_{/\frakp^{[p]}_{n}}).
\end{equation*}
Then the cycle class map $\mathrm{cl}$ induces the Abel--Jacobi map $\mathrm{AJ}_{f, n}: \mathrm{CH}^{1}(\overline{\rmX}_{d})\rightarrow  \rmH^{1}(\FF_{p^{2}}, \rmH^{1}(\overline{\rmX}_{d}\otimes\FF^{\ac}_{p}, \calO_{\lambda}(1))_{/\frakp^{[p]}_{n}})$.
The Abel--Jacobi image $\mathrm{AJ}_{f, n}([\overline{\rmX}^{\ss}_{d}])$ of the class of the supersingular locus $[\overline{\rmX}^{\ss}_{d}]\in\mathrm{CH}^{1}(\overline{\rmX}_{d})$ can be calculated by the following exact sequence
\begin{equation*}
0\rightarrow\rmH^{1}(\overline{\rmX}_{d}\otimes\FF^{\ac}_{p}, \calO_{\lambda}(1))_{/\frakp^{[p]}_{n}}\rightarrow \rmH^{1}(\overline{\rmX}^{\ord}_{d}\otimes\FF^{\ac}_{p}, \calO_{\lambda}(1))_{/\frakp^{[p]}_{n}} \rightarrow \rmH^{0}(\overline{\rmX}^{\ss}_{d}\otimes\FF^{\ac}_{p}, \calO_{\lambda})_{/\frakp^{[p]}_{n}}\rightarrow 0. 
\end{equation*}
More precisely, the above exact sequence induces the following connecting homomorphism
\begin{equation}\label{Phin}
\Phi_{n}: \rmH^{0}(\overline{\rmX}^{\ss}_{d}\otimes\FF^{\ac}_{p}, \calO_{\lambda})^{\rmG_{\FF_{p^{2}}}}_{/\frakp^{[p]}_{n}}\rightarrow  \rmH^{1}(\FF_{p^{2}}, \rmH^{1}(\overline{\rmX}_{d}\otimes \FF^{\ac}_{p}, \calO_{\lambda}(1))_{/\frakp^{[p]}_{n}}).
\end{equation}
The Abel--Jacobi image $\mathrm{AJ}_{f, n}([\overline{\rmX}^{\ss}_{d}])$ of $[\overline{\rmX}^{\ss}_{d}]$ is represented by the image of the characteristic function $\mathbf{1}_{\overline{\rmX}^{\ss}_{d}}$ of the supersingular locus $\overline{\rmX}^{\ss}_{d}$ under $\Phi_{n}$. On the other hand, we can consider the excision exact sequence 
\begin{equation}\label{Psin}
0\rightarrow\rmH^{0}(\overline{\rmX}^{\ss}_{d}\otimes{\FF^{\ac}_{p}}, \calO_{\lambda})_{/\frakp^{[p]}_{n}}\rightarrow \rmH^{1}_{c}(\overline{\rmX}^{\ord}_{d}\otimes{ \FF^{\ac}_{p}}, \calO_{\lambda})_{/\frakp^{[p]}_{n}}\rightarrow \rmH^{1}(\overline{\rmX}_{d}\otimes{\FF^{\ac}_{p}}, \calO_{\lambda})_{/\frakp^{[p]}_{n}}\rightarrow 0
\end{equation}
whose connecting homomorphism gives the map
\begin{equation*}
\Psi_{n}: \rmH^{1}(\overline{\rmX}_{d}\otimes{\FF^{\ac}_{p}}, \calO_{\lambda})^{\rmG_{\FF_{p^{2}}}}_{/\frakp^{[p]}_{n}}\rightarrow   \rmH^{1}(\FF_{p^{2}}, \rmH^{0}(\overline{\rmX}^{\ss}_{d}\otimes\FF^{\ac}_{p}, \calO_{\lambda})_{/\frakp^{[p]}_{n}}).
\end{equation*}

Since $\overline{\rmX}^{\ss}_{d}$ is naturally defined over $\FF_{p^{2}}$, we can naturally identify the source of $\Phi_{n}$ and the target of $\Psi_{n}$ with $\Gamma(\rmZ_{d}(\overline{B}), \calO_{\lambda})_{/\frakp^{[p]}_{n}}$. It is clear that the two  maps $\Phi_{n}$ and $\Psi_{n}$ are dual to each other under the Tate local duality for $\FF_{p^{2}}$ and the Poincar\'e duality for $\overline{\rmX}_{d}\otimes{\FF^{\ac}_{p}}$. The theorem below is usually referred to as the unramified arithmetic level raising theorem for Shimura curves. It is proved in \cite[Proposition 4.8]{LT} and \cite{Xiao}. We will give a slightly different proof of this theorem following the strategy of \cite{Xiao}.

\begin{proposition}\label{Phi-n}
The map $\Phi_{n}$ is surjective and $\Psi_{n}$ is injective.
\end{proposition}

\begin{proof}
We prove that $\Phi_{n}$ is surjective and the injectivity of $\Psi_{n}$ follows by duality. We consider the weight spectral sequence for $\rmH^{1}(\rmXbar_{d}(p)\otimes{\FF^{\ac}_{p}}, \rmR\Psi(\calO_{\lambda})(1))$ and its induced monodromy filtration for $\rmH^{1}(\rmXbar_{d}(p)\otimes{\FF^{\ac}_{p}}, \rmR\Psi(\calO_{\lambda})(1))_{/\frakp^{[p]}_{n}}$ given by
\begin{equation}\label{mono}
\begin{aligned}
&0\subset^{\rmE^{1,0}_{2,n}} \rmM_{1}\rmH^{1}(\rmXbar_{d}(p)\otimes{\FF^{\ac}_{p}},\rmR\Psi(\calO_{\lambda})(1))_{/\frakp^{[p]}_{n}}\subset^{\rmE^{0,1}_{2,n}} \rmM_{0}\rmH^{1}(\rmXbar_{d}(p)\otimes{\FF^{\ac}}, \rmR\Psi(\calO_{\lambda})(1))_{_{/\frakp^{[p]}_{n}}}\\
&\subset^{\rmE^{-1,2}_{2, n}} \rmM_{-1}\rmH^{1}(\rmXbar_{d}(p)\otimes{\FF^{\ac}}, \rmR\Psi(\calO_{\lambda})(1))_{/\frakp^{[p]}_{n}}.\\
\end{aligned}
\end{equation}
By the discussions in \ref{one-mono} and Lemma \ref{gamma0p},  we have 
\begin{equation*}
\begin{aligned}
&\rmE^{1,0}_{2,n}= \rmH^{0}(\overline{\rmX}^{\ss}_{d}\otimes{\FF^{\ac}_{p}}, \calO_{\lambda}(1))_{/\frakp^{[p]}_{n}};\\
&\rmE^{0,1}_{2, n}=\rmH^{1}(\overline{\rmX}_{d}\otimes {\FF^{\ac}_{p}},\calO_{\lambda}(1))^{\oplus 2}_{/\frakp^{[p]}_{n}};\\
&\rmE^{-1,2}_{2, n}=\rmH^{0}(\overline{\rmX}^{\ss}_{d}\otimes {\FF^{\ac}_{p}}, \calO_{\lambda})_{/\frakp^{[p]}_{n}}.\\
\end{aligned}
\end{equation*}
Next we consider the pushforward map 
\begin{equation*}
(\pi_{1,p,*}, \pi_{2,p,*}): \rmH^{1}(\rmXbar_{d}(p)\otimes{\FF^{\ac}_{p}}, \rmR\Psi(\calO_{\lambda})(1))_{/\frakp^{[p]}_{n}}\rightarrow \rmH^{1}(\rmXbar_{d}\otimes{\FF^{\ac}_{p}}, \rmR\Psi(\calO_{\lambda})(1))_{/\frakp^{[p]}_{n}}^{\oplus 2}.
\end{equation*}
This is surjective by Theorem \ref{DT} (Ihara's lemma). It is well-known that the composite 
\begin{equation*}
\rmE^{1,0}_{2,n}\hookrightarrow \rmH^{1}(\rmX_{d}(p)\otimes{\FF^{\ac}_{p}}, \rmR\Psi(\calO_{\lambda})(1))_{/\frakp^{[p]}_{n}}\xrightarrow{(\pi_{1,p,*}, \pi_{2,p,*})} \rmH^{1}(\rmX_{d}\otimes{\FF^{\ac}_{p}}, \rmR\Psi(\calO_{\lambda})(1))^{\oplus 2}_{/\frakp^{[p]}_{n}}
\end{equation*}
is zero. Indeed, the term $\rmE^{1,0}_{2,n}$ corresponds to the toric part of the Neron model of the Jacobian of $\rmX_{d}(p)$ over $\QQ_{p}$ and projects to zero to the $p$-old part of $\rmX_{d}(p)$, see \cite[Theorem 3.10]{Ri-100}. Therefore we obtain the following commutative diagram 
\begin{equation*}
\begin{tikzcd}
\rmH^{1}(\rmXbar_{d}\otimes{\FF^{\ac}_{p}}, \calO_{\lambda}(1))^{\oplus 2}_{/\frakp^{[p]}_{n}} \arrow[r, "({i_{1*}, i_{2*}})"] \arrow[d, Rightarrow, no head] & \frac{\rmH^{1}(\rmXbar_{d}(p)\otimes{\FF^{\ac}_{p}}, \rmR\Psi(\calO_{\lambda})(1))_{/\frakp^{[p]}_{n}}}{\rmH^{0}(\rmXbar^{\ss}_{d}\otimes{\FF^{\ac}_{p}}, \calO_{\lambda}(1))_{/\frakp^{[p]}_{n}}} \arrow[r] \arrow[d, "({\pi_{1,p,*}, \pi_{2,p,*}})"] & \rmH^{0}(\rmXbar^{\ss}_{d}\otimes{\FF^{\ac}_{p}}, \calO_{\lambda})_{/\frakp^{[p]}_{n}}  \arrow[d, "\widetilde{\Phi}_{n}"] \\
\rmH^{1}(\rmXbar_{d}\otimes{\FF^{\ac}_{p}},\calO_{\lambda}(1))^{\oplus 2}_{/\frakp^{[p]}_{n}} \arrow[r, "\nabla_{n}"]      & \rmH^{1}(\rmXbar_{d}\otimes{\FF^{\ac}_{p}}, \calO_{\lambda}(1))^{\oplus 2}_{/\frakp^{[p]}_{n}} \arrow[r]                & \coker(\nabla_{n})             
\end{tikzcd}
\end{equation*}
where the top row of the diagram is the exact sequence 
\begin{equation*}
0\rightarrow\Gr^{\rmM}_{0}\rmH^{1}_{\Iw}(\calO_{\lambda}(1))_{/\frakp^{[p]}_{n}}=\rmE^{1,0}_{2,n}\rightarrow \frac{\rmH^{1}_{\Iw}(\calO_{\lambda}(1))_{/\frakp^{[p]}_{n}}}{\rmM_{1}\rmH^{1}_{\Iw}(\calO_{\lambda}(1))_{/\frakp^{[p]}_{n}}}\rightarrow \Gr^{\rmM}_{-1}\rmH^{1}_{\Iw}(\calO_{\lambda}(1))_{/\frakp^{[p]}_{n}}=\rmE^{-1,2}_{2, n}\rightarrow 0
\end{equation*}
where  we put $\rmH^{1}_{\Iw}(\calO_{\lambda}(1))_{_{/\frakp^{[p]}_{n}}}=\rmH^{1}(\rmXbar_{d}(p)\otimes{\FF^{\ac}_{p}},\rmR\Psi(\calO_{\lambda})(1))_{/\frakp^{[p]}_{n}}$. The map $\widetilde{\Phi}_{n}$ is the one naturally induced by $({\pi_{1,p,*}, \pi_{2,p,*}})$. The map $\nabla_{n}$ is by definition given by the composite of $({\pi_{1,p,*}, \pi_{2,p,*}})$ with $({i_{1*}, i_{2*}})$. The map $\nabla_{n}$ is given by the matrix
\begin{equation*}
\begin{pmatrix}
\mathrm{id} &   \Frob_{p}\\
\Frob_{p}\rmS_{p} & \mathrm{id}\\ 
\end{pmatrix}=
\begin{pmatrix}
\mathrm{id} &   \Frob_{p}\\
\Frob_{p} & \mathrm{id}\\ 
\end{pmatrix}
\end{equation*}
since the central element $\rmS_{p}$ has trivial action on $\rmH^{1}(\rmXbar_{d}\otimes{\FF^{\ac}_{p}},\rmR\Psi(\calO_{\lambda})(1))_{/\frakp^{[p]}_{n}}$. It follows then by a simple computation that we have an isomorphism
\begin{equation*}
\coker(\nabla_{n})\cong \rmH^{1}(\FF_{p^{2}}, \rmH^{1}(\rmXbar_{d}\otimes{\FF^{\ac}_{p}}, \calO_{\lambda}(1))_{/\frakp^{[p]}_{n}}).
\end{equation*}
Since  $({\pi_{1,p,*}, \pi_{2,p,*}})$ is surjective, the map  $\widetilde{\Phi}_{n}$  is surjective as well. 

Therefore we are left to show that $\widetilde{\Phi}_{n}$ agree with the map $\Phi_{n}$. To show this, we rely on some results proved in \cite[Proposition 1.5]{Illusie2}. More precisely, the natural quotient map 
\begin{equation*}
\rmH^{1}(\rmXbar_{d}(p)\otimes{\FF^{\ac}_{p}}, \rmR\Psi(\calO_{\lambda})(1))\rightarrow \rmH^{0}(\overline{\rmX}^{\ss}_{d}\otimes{\FF^{\ac}_{p}}, \calO_{\lambda})
\end{equation*}
in the monodromy filtration factors through $\rmH^{1}(\overline{\rmX}^{\ord}_{d}\otimes{\FF^{\ac}_{p}}, \calO_{\lambda})$:
\begin{equation*}
\rmH^{1}(\rmX_{d}(p)\otimes{\FF^{\ac}_{p}},\rmR\Psi (\calO_{\lambda})(1))\xrightarrow{i^{*}_{1}}\rmH^{1}(\overline{\rmX}^{\ord}_{d}\otimes{\FF^{\ac}_{p}}, \calO_{\lambda}(1)) \rightarrow \rmH^{0}(\overline{\rmX}^{\ss}_{d}\otimes{\FF^{\ac}_{p}}, \calO_{\lambda})\rightarrow 0
\end{equation*}
where $\rmH^{1}(\overline{\rmX}^{\ord}_{d}\otimes{\FF^{\ac}_{p}}, \calO_{\lambda}(1)) \rightarrow \rmH^{0}(\overline{\rmX}^{\ss}_{d}\otimes{\FF^{\ac}_{p}}, \calO_{\lambda})$ comes from the natural excision exact sequence for  $\rmH^{1}(\overline{\rmX}_{d}\otimes{\FF^{\ac}_{p}}, \calO_{\lambda}(1))$ and the $i^{*}_{1}$ is the pullback of the cohomology of nearby cycles
\begin{equation*}
\rmH^{1}(\overline{\rmX}_{d}(p)\otimes{\FF^{\ac}_{p}}, \rmR\Psi(\calO_{\lambda})(1))\xrightarrow{i^{*}_{1}}\rmH^{1}(\overline{\rmX}_{d}\otimes{\FF^{\ac}_{p}},\rmR\Psi(\calO_{\lambda})(1))\cong\rmH^{1}(\overline{\rmX}^{\ord}_{d}\otimes{\FF^{\ac}_{p}},\calO_{\lambda}(1)).
\end{equation*}
Let $x\in \rmH^{0}(\overline{\rmX}^{\ss}_{d}\otimes{\FF^{\ac}_{p}}, \calO_{\lambda})_{/\frakp^{[p]}_{n}}$ and $\tilde{x}$ be a preimage of $x$ in $\rmH^{1}(\overline{\rmX}^{\ord}\otimes{\FF^{\ac}_{p}}, \calO_{\lambda}(1))_{/\frakp^{[p]}_{n}}$. Since $i^{*}_{1}i_{1*}$ is the identity map, we can  take $i_{1*}(\tilde{x})$ as a preimage of $\tilde{x}$ in $\rmH^{1}(\overline{\rmX}_{d}(p)\otimes{\FF^{\ac}_{p}}, \rmR\Psi(\calO_{\lambda})(1))_{/\frakp^{[p]}_{n}}$. Therefore for $x\in \rmH^{0}(\overline{\rmX}^{\ss}_{d}\otimes{\FF^{\ac}_{p}}, \calO_{\lambda})_{/\frakp^{[p]}_{n}}$, we have $\widetilde{\Phi}(x)=(\pi_{1, p,*}i_{1*}(\tilde{x}), \pi_{2,p,*}i_{2*}(\tilde{x}))=(\tilde{x}, \Frob_{p}(\tilde{x}))$. Since the natural quotient map 
\begin{equation*}
\rmH^{1}(\rmXbar_{d}\otimes{\FF^{\ac}_{p}}, \calO_{\lambda}(1))_{/\frakp^{[p]}_{n}}^{\oplus 2} \rightarrow \coker(\nabla_{n})
\end{equation*}
is given by sending $(x, y)\in \rmH^{1}(\rmXbar_{d}\otimes{\FF^{\ac}_{p}}, \calO_{\lambda})(1))^{\oplus 2}$  to $(x-\Frob_{p}(y))$ by the definition of $\nabla_{n}$, we have 
$\widetilde{\Phi}_{n}(x)=(1-\Frob^{2}_{p})\tilde{x}$. But this is precisely the definition of  $\Phi_{n}(x)$. 
\end{proof}

\begin{remark}\label{jump-the-grading}
By \cite[Lemma 9.1]{BD}, the surjectivity of ${\Phi}_{n}$ implies that $\rmT_{p}$ acts on 
\begin{equation*}
\rmH^{1}(\FF_{p^{2}}, \rmH^{1}(\rmXbar_{d}\otimes{\FF^{\ac}_{p}}, \calO_{\lambda}(1))_{/\frakp^{[p]}_{n}}) 
\end{equation*}
by $\epsilon_{p}(p+1)$ and hence 
\begin{equation*}
\rmH^{1}(\FF_{p^{2}}, \rmH^{1}(\rmXbar_{d}\otimes{\FF^{\ac}_{p}}, \calO_{\lambda}(1))_{/\frakp^{[p]}_{n}})\cong \rmH^{1}(\FF_{p^{2}}, \rmH^{1}(\rmXbar_{d}\otimes{\FF^{\ac}_{p}}, \calO_{\lambda}(1))_{/\frakp_{n}}).
\end{equation*}
Therefore we obtain a surjective map 
\begin{equation*}
\Phi_{n}: \rmH^{0}(\overline{\rmX}^{\ss}_{d}\otimes{\FF^{\ac}_{p}}, \calO_{\lambda})_{/\frakp^{[p]}_{n}}^{\rmG_{\FF_{p^{2}}}}\rightarrow  \rmH^{1}(\FF_{p^{2}}, \rmH^{1}(\overline{\rmX}_{d} \otimes{\FF^{\ac}_{p}}, \calO_{\lambda}(1))_{/\frakp_{n}}). 
\end{equation*}
Similar reasoning applies to $\Psi_{n}$ and we have an injection 
\begin{equation*}
\Psi_{n}: \rmH^{1}(\overline{\rmX}_{d}\otimes \FF^{\ac}_{p}, \calO_{\lambda})^{\rmG_{\FF_{p^{2}}}}_{/\frakp_{n}}\rightarrow   \rmH^{1}(\FF_{p^{2}}, \rmH^{0}(\overline{\rmX}^{\ss}_{d}\otimes\FF^{\ac}_{p}, \calO_{\lambda})_{/\frakp^{[p]}_{n}}).
\end{equation*}
\end{remark}

We recall the definition of an $n$-admissible prime for $f$ following \cite{BD}.
\begin{definition}
Let $n\geq 1$ be an integer. A prime $p$ is called an $n$-admissible prime for $f$ if 
\begin{enumerate}
\item $p\nmid N\ell$;
\item $\ell\nmid p^{2}-1$;
\item $\varpi^{n}\mid p+1-\epsilon_{p}(f)a_{p}(f)$ with $\epsilon_{p}(f)\in\{-1, 1\}$. 
\end{enumerate}
\end{definition} 

The following result will be referred to as the unramified arithmetic level raising theorem for the Shimura curve $\rmX_{d}$ following the terminology of \cite{Xiao} as we are considering a Shimura variety at a prime of good reduction and hence we are concerned with the unramified part of the Galois cohomology at this prime.

\begin{proposition}[unramified level raising for Shimura curve]\label{level-raise-curve}
Let $p$ be an $n$-admissible prime for $f$.  We assume that Assumption \ref{assump} holds for $\overline{\rho}_{f,\lambda}$, then we have the following statements.
\begin{enumerate}
\item We have an isomorphism
\begin{equation*}
\Phi_{n}: \rmH^{0}(\overline{\rmX}^{\ss}_{d}\otimes{\FF^{\ac}_{p}}, \calO_{\lambda})_{/\frakp^{[p]}_{n}}^{\rmG_{\FF_{p^{2}}}}\xrightarrow{\sim}  \rmH^{1}(\FF_{p^{2}}, \rmH^{1}(\overline{\rmX}_{d} \otimes{\FF^{\ac}_{p}}, \calO_{\lambda}(1))_{/\frakp_{n}})
\end{equation*}
which can be identified with an isomorphism
\begin{equation*}
\Phi_{n}: \Gamma(\rmZ_{d}(\overline{B}), \calO_{\lambda})_{/\frakp^{[p]}_{n}}\xrightarrow{\sim}  \rmH^{1}(\FF_{p^{2}}, \rmH^{1}(\overline{\rmX}_{d}\otimes{\FF^{\ac}_{p}}, \calO_{\lambda}(1))_{/\frakp_{n}}).
\end{equation*}

\item Similarly, we have a canonical isomorphism
\begin{equation*}
\Psi_{n}: \rmH^{1}(\overline{\rmX}_{d}\otimes{\FF^{\ac}_{p}}, \calO_{\lambda})^{\rmG_{\FF_{p^{2}}}}_{/\frakp_{n}}\xrightarrow{\sim} \rmH^{1}(\FF_{p^{2}}, \rmH^{0}(\overline{\rmX}^{\ss}_{d}\otimes{\FF^{\ac}_{p}}, \calO_{\lambda})_{/\frakp^{[p]}_{n}})
\end{equation*}
which can be identified with an isomorphism
\begin{equation*}
\Psi_{n}: \rmH^{1}(\overline{\rmX}_{d}\otimes{\FF^{\ac}_{p}}, \calO_{\lambda})_{/\frakp_{n}}^{\rmG_{\FF_{p^{2}}}}\xrightarrow{\sim}  \Gamma(\rmZ_{d}(\overline{B}), \calO_{\lambda})_{/\frakp^{[p]}_{n}}.
\end{equation*}
\end{enumerate}
\end{proposition}

\begin{proof}
We only need to show that the two sides of $\Phi_{n}$ have the same cardinality. But by Proposition \ref{multi-one}, we know that 
\begin{equation*}
\rmH^{1}(\FF_{p^{2}}, \rmH^{1}(\overline{\rmX}_{d}\otimes{\FF^{\ac}_{p}}, \calO_{\lambda}(1))_{/\frakp_{n}})=\rmH^{1}(\FF_{p^{2}}, \overline{\rho}^{\oplus2}_{\lambda, n})
\end{equation*}
is free of rank two over $\calO_{\lambda, n}$. Note Assumption \ref{assump} implies that $\Gamma(\rmZ(\overline{B}), k_{\lambda})_{\frakm^{[p]}}$ is one-dimensional by \cite[Proposition 6.8]{CH-1}. By the cleanness of $d$, it follows that $\Gamma(\rmZ_{d}(\overline{B}), k_{\lambda})_{\frakm^{[p]}}$ is two dimensional and hence $\Gamma(\rmZ_{d}(\overline{B}), \calO_{\lambda})_{/\frakp^{[p]}_{n}}$ is free of rank two over $\calO_{\lambda, n}$. The theorem follows.
\end{proof}
\begin{definition}\label{Phi-ast-n}
Under the assumption of the above theorem, we let 
\begin{equation*}
\Phi^{\ast}_{n}: \rmH^{1}(\FF_{p^{2}}, \rmH^{1}(\overline{\rmX}_{d} \otimes{\FF^{\ac}_{p}}, \calO_{\lambda}(1))_{/\frakp_{n}}) \xrightarrow{\sim}\rmH^{0}(\overline{\rmX}^{\ss}_{d}\otimes{\FF^{\ac}_{p}}, \calO_{\lambda})_{/\frakp^{[p]}_{n}}^{\rmG_{\FF_{p^{2}}}}\cong \Gamma(\rmZ_{d}(\overline{B}), \calO_{\lambda})_{/\frakp^{[p]}_{n}}.
\end{equation*}
be the inverse isomorphism of $\Phi_{n}$.
\end{definition}
\begin{lemma}\label{canonical-decomposition}
Let $p$ be an $n$-admissible prime for $f$ and suppose that Assumption \ref{assump} holds for $\overline{\rho}_{f,\lambda}$, then we have the following exact sequence of $\calO_{\lambda, n}[\rmG_{\FF_{p^{2}}}]$-modules
\begin{equation*}
0\rightarrow\Gamma(\rmZ_{d}(\overline{B}), \calO_{\lambda}(1))_{/\frakp^{[p]}_{n}}\rightarrow \rmH^{1}(\overline{\rmX}_{d}\otimes{\FF^{\ac}_{p}}, \calO_{\lambda}(1))_{/\frakp_{n}}\rightarrow \Gamma(\rmZ_{d}(\overline{B}), \calO_{\lambda})_{/\frakp^{[p]}_{n}}\rightarrow0
\end{equation*}
which splits.
\end{lemma}

\begin{proof}
Since $p$ is $n$-admissible and Proposition \ref{multi-one} holds, we have an exact sequence of $\calO_{\lambda, n}[\rmG_{\FF_{p^{2}}}]$-modules
\begin{equation*}
0\rightarrow \rmH^{1}(\overline{\rmX}_{d}\otimes{\FF^{\ac}_{p}}, \calO_{\lambda})_{/\frakp_{n}}^{\rmG_{\FF_{p^{2}}}}(1)\rightarrow \rmH^{1}(\overline{\rmX}_{d}\otimes{\FF^{\ac}_{p}}, \calO_{\lambda}(1))_{/\frakp_{n}}\rightarrow \rmH^{1}(\FF_{p^{2}}, \rmH^{1}(\overline{\rmX}_{d}\otimes{\FF^{\ac}_{p}}, \calO_{\lambda}(1))_{/\frakp_{n}})\rightarrow 0.
\end{equation*}
This exact sequence clearly splits and we obtain an identification of this exact sequence with 
\begin{equation*}
0\rightarrow\Gamma(\rmZ_{d}(\overline{B}), \calO_{\lambda}(1))_{/\frakp^{[p]}_{n}}\rightarrow\rmH^{1}(\overline{\rmX}_{d}\otimes{\FF^{\ac}_{p}}, \calO_{\lambda}(1))_{/\frakp_{n}}\rightarrow \Gamma(\rmZ_{d}(\overline{B}), \calO_{\lambda})_{/\frakp^{[p]}_{n}}\rightarrow0
\end{equation*}
using $\Psi_{n}$ and $\Phi^{\ast}_{n}$. 
\end{proof}

Next we will give a refinement of the arithmetic level raising results which will play an important role in latter applications. We need the following definition.
\begin{definition}
We put
\begin{equation*}
\rmH^{1}(\rmX_{d}(p)\otimes{\Qbar}, \calO_{\lambda}(1))^{\bullet}_{/\frakp^{[p]}_{n}}=\mathrm{ker}[\rmH^{1}(\rmX_{d}(p)\otimes{\Qbar}, \calO_{\lambda}(1))_{/\frakp^{[p]}_{n}}\xrightarrow{(\pi_{1\ast}, \pi_{2\ast})}\rmH^{1}(\rmX_{d}\otimes{\Qbar}, \calO_{\lambda}(1))_{/\frakp^{[p]}_{n}}^{\oplus2}].
\end{equation*}
We call  $\rmH^{1}(\rmX_{d}(p)\otimes{\Qbar}, \calO_{\lambda}(1))^{\bullet}_{/\frakp^{[p]}_{n}}$ the {new part} of $\rmH^{1}(\rmX_{d}(p)\otimes{\Qbar}, \calO_{\lambda}(1))_{/\frakp^{[p]}_{n}}$ at $p$.
\end{definition}

\begin{proposition}\label{new-old-part}
Let $p$ be an $n$-admissible prime for $f$ and suppose that Assumption \ref{assump} holds for $\overline{\rho}_{f,\lambda}$. Then the new part $\rmH^{1}({\rmX}_{d}(p)\otimes{\QQ^{\ac}}, \calO_{\lambda}(1))^{\bullet}_{/\frakp^{[p]}_{n}}$ is unramified at $p$ and we have the following split exact sequence
\begin{equation*}
0\rightarrow\Gamma(\rmZ_{d}(\overline{B}), \calO_{\lambda}(1))_{/\frakp^{[p]}_{n}}\rightarrow \rmH^{1}({\rmX}_{d}(p)\otimes{\QQ^{\ac}_{p}}, \calO_{\lambda}(1))^{\bullet}_{/\frakp^{[p]}_{n}}\rightarrow \Gamma(\rmZ_{d}(\overline{B}), \calO_{\lambda})_{/\frakp^{[p]}_{n}}\rightarrow0
\end{equation*}
of $\calO_{\lambda, n}[\rmG_{\FF_{p^{2}}}]$-modules.
Moreover the map $\alpha^{\ast}=(\pi^{\ast}_{1,p}, \pi^{\ast}_{2,p})$ induces an isomorphism 
\begin{equation*}
\alpha^{\ast}: \rmH^{1}({\rmX}_{d}\otimes{\Qbar}, \calO_{\lambda}(1))_{/\frakp_{n}}\xrightarrow{\sim} \rmH^{1}(\rmX_{d}(p)\otimes{\Qbar}, \calO_{\lambda}(1))^{\bullet}_{/\frakp^{[p]}_{n}}
\end{equation*}
of $\calO_{\lambda, n}[\rmG_{\QQ}]$-modules.
\end{proposition}

\begin{proof}
We have a commutative diagram
\begin{equation}\label{dia1}
\begin{tikzcd}
\ker(\overline{\alpha}_{\ast}) \arrow[d] \arrow[r]            &\ker({\alpha}_{\ast})   \arrow[d] \arrow[r]          &   \rmH^{1}(\overline{\rmX}_{d}(p)\otimes{\FF^{\ac}_{p}}, \rmR\Phi(\calO_{\lambda})(1))_{/\frakp^{[p]}_{n}}    \arrow[d, Rightarrow, no head]       \\
\rmH^{1}(\overline{\rmX}_{d}(p)\otimes{\FF^{\ac}_{p}}, \calO_{\lambda}(1))_{/\frakp^{[p]}_{n}} \arrow[r, "sp"] \arrow[d, "\overline{\alpha}_{\ast}"]      & \rmH^{1}(\overline{\rmX}_{d}(p)\otimes{\FF^{\ac}_{p}}, \rmR\Psi(\calO_{\lambda})(1))_{/\frakp^{[p]}_{n}} \arrow[r] \arrow[d, "\alpha_{\ast}"] & \rmH^{1}(\overline{\rmX}_{d}(p)\otimes{\FF^{\ac}_{p}}, \rmR\Phi(\calO_{\lambda})(1))_{/\frakp^{[p]}_{n}} \arrow[d] \\
\rmH^{1}(\overline{\rmX}_{d}\otimes{\FF^{\ac}_{p}}, \calO_{\lambda}(1))^{\oplus 2}_{/\frakp^{[p]}_{n}}\arrow[r, Rightarrow, no head]\arrow[d] & \rmH^{1}(\overline{\rmX}_{d}\otimes {\FF^{\ac}_{p}}, \calO_{\lambda}(1))^{\oplus 2}_{/\frakp^{[p]}_{n}}\arrow[r] \arrow[d]     & 0      \\
\coker(\overline{\alpha}_{\ast})\arrow[r] &0\\    
\end{tikzcd}
\end{equation}
where $\overline{\alpha}_{\ast}$ and $\alpha_{\ast}$ are both induced by the degeneracy maps $(\pi_{1,p,\ast}, \pi_{2,p,\ast})$. 

To understand $\ker(\overline{\alpha}_{\ast})$, we consider the following commutative diagram
\begin{equation}\label{dia2}
\begin{tikzcd}
\rmH^{0}(\overline{\rmX}^{\ss}_{d}\otimes{\FF^{\ac}_{p}}, \calO_{\lambda}(1))_{/\frakp^{[p]}_{n}}\arrow[r] \arrow[d, Rightarrow, no head] & \ker(\overline{\alpha}_{\ast}) \arrow[d] \arrow[r] & \ker(\nabla_{n}) \arrow[d] \\
\rmH^{0}(\overline{\rmX}^{\ss}_{d}\otimes{\FF^{\ac}_{p}}, \calO_{\lambda}(1))_{/\frakp^{[p]}_{n}} \arrow[r] \arrow[d, "\overline{\alpha}_{\ast}"] & \rmH^{1}(\overline{\rmX}_{d}(p)\otimes{\FF^{\ac}_{p}}, \calO_{\lambda}(1))_{/\frakp^{[p]}_{n}} \arrow[r] \arrow[d, "\overline{\alpha}_{\ast}"] & \rmH^{1}(\overline{\rmX}_{d}\otimes{\FF^{\ac}_{p}}, \calO_{\lambda}(1))^{\oplus 2}_{/\frakp^{[p]}_{n}} \arrow[d, "\nabla_{n}"] \\
0 \arrow[r]  & \rmH^{1}(\overline{\rmX}_{d}\otimes{\FF^{\ac}_{p}}, \calO_{\lambda}(1))^{\oplus 2}_{/\frakp^{[p]}_{n}} \arrow[r, Rightarrow, no head] \arrow[d] & \rmH^{1}(\overline{\rmX}_{d}\otimes{\FF^{\ac}_{p}}, \calO_{\lambda}(1))^{\oplus2}_{/\frakp^{[p]}_{n}} \arrow[d] \\
          & \coker(\overline{\alpha}_{\ast}) \arrow[r]           & \coker(\nabla_{n})     
\end{tikzcd}
\end{equation}
which implies by the snake lemma that $\ker(\overline{\alpha}_{\ast})$ sits in the exact sequence
\begin{equation}\label{ker-alpha}
0\rightarrow \rmH^{0}(\overline{\rmX}^{\ss}_{d}\otimes{\FF^{\ac}_{p}}, \calO_{\lambda}(1))_{/\frakp^{[p]}_{n}}\rightarrow \ker(\overline{\alpha}_{\ast})\rightarrow \ker(\nabla_{n})\rightarrow0
\end{equation}
and that $\coker(\overline{\alpha}_{\ast})=\coker(\nabla_{n})$. By the proof of Proposition \ref{canonical-decomposition}, we have 
\begin{equation*}
\coker(\nabla_{n})\cong\rmH^{1}(\FF_{p^{2}}, \rmH^{1}(\overline{\rmX}_{d}\otimes{\FF^{\ac}_{p}}, \calO_{\lambda}(1))_{/\frakp^{[p]}_{n}}). 
\end{equation*}
Note the connecting map  
\begin{equation*}
\rmH^{1}(\overline{\rmX}_{d}(p)\otimes{\FF^{\ac}_{p}}, \rmR\Phi(\calO_{\lambda})(1))_{/\frakp^{[p]}_{n}}\cong \Gamma(\rmZ_{d}(\overline{B}), \calO_{\lambda})_{/\frakp^{[p]}_{n}}\rightarrow \coker(\overline{\alpha}_{\ast})\cong \rmH^{1}(\FF_{p^{2}}, \rmH^{1}(\overline{\rmX}_{d}\otimes{\FF^{\ac}_{p}}, \calO_{\lambda}(1))_{/\frakp^{[p]}_{n}})
\end{equation*}
is by definition the same as the map $\Phi_{n}$ and hence is an isomorphism.
Therefore by the snake lemma applied to diagram \ref{dia1}, we have 
\begin{equation*}
\ker(\overline{\alpha}_{\ast})=\ker(\alpha_{\ast}).
\end{equation*} 
Note also that $ \ker(\nabla_{n})\cong \coker(\nabla_{n})$ by the canonical isomorphism 
\begin{equation*}
\rmH^{0}(\FF_{p^{2}}, \rmH^{1}(\overline{\rmX}_{d}\otimes{\FF^{\ac}_{p}}, \calO_{\lambda}(1))_{/\frakp^{[p]}_{n}}\cong \rmH^{1}(\FF_{p^{2}}, \rmH^{1}(\overline{\rmX}_{d}\otimes{\FF^{\ac}_{p}}, \calO_{\lambda}(1))_{/\frakp^{[p]}_{n}}
\end{equation*}
which in turn is also isomorphic to $\Gamma(\rmZ_{d}(\overline{B}), \calO_{\lambda}(1))_{/\frakp^{[p]}_{n}}$ via the map $\Phi^{\ast}_{n}$.
Therefore the exact sequence \ref{ker-alpha} is isomorphic to
 \begin{equation*}
0\rightarrow \Gamma(\rmZ_{d}(\overline{B}), \calO_{\lambda}(1))_{/\frakp^{[p]}_{n}}\rightarrow \ker(\overline{\alpha}_{\ast})\rightarrow \Gamma(\rmZ_{d}(\overline{B}), \calO_{\lambda})_{/\frakp^{[p]}_{n}}\rightarrow0
\end{equation*}
which splits as $\ker(\overline{\alpha}_{\ast})$ is clearly unramified as an $\calO_{\lambda, n}[\rmG_{\FF_{p^{2}}}]$-module. This combines with the isomorphism $\ker(\overline{\alpha}_{\ast})=\ker(\alpha_{\ast})$ gives the exact sequence
\begin{equation*}
0\rightarrow \Gamma(\rmZ_{d}(\overline{B}), \calO_{\lambda}(1))_{/\frakp^{[p]}_{n}}\rightarrow \rmH^{1}({\rmX}_{d}(p)\otimes {\QQ^{\ac}_{p}}, \calO_{\lambda}(1))^{\bullet}_{/\frakp^{[p]}_{n}}\rightarrow \Gamma(\rmZ_{d}(\overline{B}), \calO_{\lambda})_{/\frakp^{[p]}_{n}}\rightarrow0.
\end{equation*}

Since the composite map $\alpha_{\ast}\alpha^{\ast}$
\begin{equation*}
\rmH^{1}({\rmX}_{d}\otimes{\Qbar}, \calO_{\lambda}(1))^{\oplus2}_{/\frakp^{[p]}_{n}}\xrightarrow{\alpha^{\ast}} \rmH^{1}(\rmX_{d}(p)\otimes{\Qbar}, \calO_{\lambda}(1))_{/\frakp^{[p]}_{n}}\xrightarrow{\alpha_{\ast}} \rmH^{1}({\rmX}_{d}\otimes{\Qbar}, \calO_{\lambda}(1))^{\oplus2}_{/\frakp^{[p]}_{n}}
\end{equation*}
is given by the matrix 
\begin{equation*}
\Delta_{n}=
\begin{pmatrix}
p+1 &   \rmT_{p}\\
\rmT_{p} & p+1\\ 
\end{pmatrix}
\end{equation*}
whose kernel is isomorphic to $\rmH^{1}({\rmX}_{d}\otimes{\Qbar}, \calO_{\lambda}(1))_{/\frakp^{[p]}_{n}}$ by the map 
\begin{equation}\label{1/2}
x\mapsto \frac{1}{2}(x, -\epsilon_{p}(f)x), 
\end{equation}
we have a canonical map 
\begin{equation*}
\alpha^{\ast}: \ker(\alpha_{\ast}\alpha^{\ast})\cong\rmH^{1}({\rmX}_{d}\otimes{\Qbar}, \calO_{\lambda}(1))_{/\frakp^{[p]}_{n}} \rightarrow \ker(\alpha_{\ast})=\rmH^{1}(\rmX_{d}(p)\otimes{\Qbar}, \calO_{\lambda}(1))^{\bullet}_{/\frakp^{[p]}_{n}} 
\end{equation*}
which is injective by Ihara's lemma. 

Note that Proposition \ref{Phi-n} and Proposition \ref{canonical-decomposition} furnish a map of exact sequences
\begin{equation*}
\begin{tikzcd}
0 \arrow[r] &\Gamma(\rmZ_{d}(\overline{B}), \calO_{\lambda}(1))_{/\frakp^{[p]}_{n}} \arrow[r] \arrow[d]      & \rmH^{1}({\rmX}_{d}\otimes{\QQ^{\ac}_{p}}, \calO_{\lambda}(1))_{/\frakp^{[p]}_{n}} \arrow[r] \arrow[d, "\alpha^{\ast}"] & \Gamma(\rmZ_{d}(\overline{B}), \calO_{\lambda})_{/\frakp^{[p]}_{n}} \arrow[d] \arrow[r] & 0\\
0 \arrow[r] &\Gamma(\rmZ_{d}(\overline{B}), \calO_{\lambda}(1))_{/\frakp^{[p]}_{n}} \arrow[r] &\rmH^{1}({\rmX}_{d}(p)\otimes {\QQ^{\ac}_{p}}, \calO_{\lambda}(1))^{\bullet}_{/\frakp^{[p]}_{n}}\arrow[r] & \Gamma(\rmZ_{d}(\overline{B}), \calO_{\lambda})_{/\frakp^{[p]}_{n}}\arrow[r] & 0\\  
\end{tikzcd}
\end{equation*}
of $\calO_{\lambda,n}[\rmG_{\FF_{p^{2}}}]$-modules. It follows that 
\begin{equation*}
\rmH^{1}(\rmX_{d}(p)\otimes{\Qbar}, \calO_{\lambda}(1))^{\bullet}_{/\frakp^{[p]}_{n}}\cong \rmH^{1}(\rmX_{d}\otimes{\Qbar}, \calO_{\lambda}(1))_{/\frakp_{n}}
\end{equation*}
as they are of the same cardinality by the above discussion and $\alpha^{\ast}$ is injective.
\end{proof}
\begin{remark}\label{special-new-part}
Let $\rmH^{1}(\overline{\rmX}_{d}(p)\otimes {\FF^{\ac}_{p}}, \calO_{\lambda}(1))^{\bullet}_{/\frakp^{[p]}_{n}}=\ker(\overline{\alpha}_{\ast})$ We note for later applications that we have obtained an isomorphism
\begin{equation*}
\rmH^{1}(\rmX_{d}(p)\otimes{\Qbar_{p}}, \calO_{\lambda}(1))^{\bullet}_{/\frakp^{[p]}_{n}}\cong \rmH^{1}(\overline{\rmX}_{d}(p)\otimes {\FF^{\ac}_{p}}, \calO_{\lambda}(1))^{\bullet}_{/\frakp^{[p]}_{n}}
\end{equation*}
of $\calO_{\lambda}[\rmG_{\FF_{p^{2}}}]$-modules. There is an isomorphism of short exact sequences 
\begin{equation*}
\begin{tikzcd}
&\rmH^{1}(\overline{\rmX}_{d}\otimes{\FF^{\ac}_{p}}, \calO_{\lambda})_{/\frakp_{n}}^{\rmG_{\FF_{p^{2}}}}(1) \arrow[r] \arrow[d]      & \rmH^{1}(\overline{\rmX}_{d}\otimes{\FF^{\ac}_{p}}, \calO_{\lambda}(1))_{/\frakp^{[p]}_{n}} \arrow[r] \arrow[d, "\alpha^{\ast}"] & \rmH^{1}(\FF_{p^{2}}, \rmH^{1}(\overline{\rmX}_{d}\otimes{\FF^{\ac}_{p}}, \calO_{\lambda}(1))_{/\frakp_{n}})\arrow[d] \\
& \rmH^{0}(\overline{\rmX}^{\ss}_{d}\otimes{\FF^{\ac}_{p}}, \calO_{\lambda}(1))_{/\frakp^{[p]}_{n}} \arrow[r] & \rmH^{1}(\overline{\rmX}_{d}(p)\otimes {\FF^{\ac}_{p}}, \calO_{\lambda}(1))^{\bullet}_{/\frakp^{[p]}_{n}}\arrow[r] & \rmH^{1}(\FF_{p^{2}}, \rmH^{1}(\overline{\rmX}_{d}\otimes{\FF^{\ac}_{p}}, \calO_{\lambda}(1))_{/\frakp_{n}})\\  
\end{tikzcd}
\end{equation*}
of $\calO_{\lambda,n}[\rmG_{\FF_{p^{2}}}]$-modules where the first vertical map is nothing but $\Psi_{n}$ and the third vertical map is the identity map.
\end{remark}

Next, we derive a dual version of the previous Proposition \ref{new-old-part}. This result will not be used later but we include it for completeness. 
\begin{definition}
We put  
\begin{equation*}
\rmH^{1}(\rmX_{d}(p)\otimes{\Qbar}, \calO_{\lambda}(1))_{\bullet/\frakp^{[p]}_{n}}:=\mathrm{coker}[\rmH^{1}(\rmX_{d}\otimes {\Qbar}, \calO_{\lambda}(1))^{\oplus 2}_{/\frakp^{[p]}_{n}}\xrightarrow{(\pi^{\ast}_{1,p}, \pi^{\ast}_{2,p})}\rmH^{1}(\rmX_{d}(p)\otimes{\Qbar}, \calO_{\lambda}(1))_{/\frakp^{[p]}_{n}}].
\end{equation*}
We call $\rmH^{1}(\rmX_{d}(p)\otimes{\Qbar}, \calO_{\lambda}(1))_{\bullet/\frakp^{[p]}_{n}}$ the {new quotient}
of  $\rmH^{1}(\rmX_{d}(p)\otimes{\Qbar}, \calO_{\lambda}(1))_{/\frakp^{[p]}_{n}}$ at $p$.
\end{definition}

\begin{proposition}\label{new-old-quotient}
Let $p$ be an $n$-admissible prime for $f$.  We assume that Assumption \ref{assump} holds. Then the Galois module $\rmH^{1}({\rmX}_{d}(p)\otimes{\QQ^{\ac}_{p}}, \calO_{\lambda}(1))_{\bullet/\frakp^{[p]}_{n}}$ is unramified at $p$ and we have the following split exact sequence
\begin{equation*}
0\rightarrow\Gamma(\rmZ_{d}(\overline{B}), \calO_{\lambda}(1))_{/\frakp^{[p]}_{n}}\rightarrow \rmH^{1}({\rmX}_{d}(p)\otimes{\QQ^{\ac}_{p}}, \calO_{\lambda}(1))_{\bullet/\frakp^{[p]}_{n}}\rightarrow \Gamma(\rmZ_{d}(\overline{B}), \calO_{\lambda})_{/\frakp^{[p]}_{n}}\rightarrow0
\end{equation*}
of $\calO_{\lambda, n}[\rmG_{\FF_{p^{2}}}]$-modules. Moreover the degeneracy maps $\alpha_{\ast}=(\pi_{1,p,\ast}, \pi_{2,p,\ast})$ induces an isomorphism 
\begin{equation*}
\alpha_{\ast}: \rmH^{1}(\rmX_{d}(p)\otimes{\Qbar}, \calO_{\lambda}(1))_{\bullet/\frakp^{[p]}_{n}}\xrightarrow{\sim}\rmH^{1}(\overline{\rmX}_{d}\otimes{\Qbar}, \calO_{\lambda}(1))_{/\frakp_{n}}
\end{equation*}
of $\calO_{\lambda, n}[\rmG_{\QQ}]$-modules.
\end{proposition}
\begin{proof}
We identify the cohomology $\rmH^{1}({\rmX}_{d}(p)\otimes{\QQ^{\ac}_{p}}, \calO_{\lambda}(1))_{/\frakp^{[p]}_{n}}$ with $\rmH^{1}(\overline{\rmX}_{d}(p)\otimes{\FF^{\ac}_{p}}, \rmR\Psi(\calO_{\lambda})(1))_{/\frakp^{[p]}_{n}}$.
The specialization exact sequence induces an exact sequence
\begin{equation*}
0\rightarrow \rmH^{1}(\overline{\rmX}_{d}(p)\otimes{\FF^{\ac}_{p}}, \calO_{\lambda}(1))_{/\frakp^{[p]}_{n}}\rightarrow \rmH^{1}(\overline{\rmX}_{d}(p)\otimes{\FF^{\ac}_{p}}, \rmR\Psi(\calO_{\lambda})(1))_{/\frakp^{[p]}_{n}} \rightarrow  \rmH^{1}(\overline{\rmX}_{d}(p)\otimes{\FF^{\ac}_{p}}, \rmR\Phi(\calO_{\lambda})(1))_{/\frakp^{[p]}_{n}} \rightarrow 0.
\end{equation*}
We define the new quotient $\rmH^{1}(\rmX_{d}(p)\otimes{\FF^{\ac}_{p}}, \calO_{\lambda}(1))_{\bullet/\frakp^{[p]}_{n}}$ of $\rmH^{1}(\rmX_{d}(p)\otimes{\FF^{\ac}_{p}}, \calO_{\lambda}(1))_{/\frakp^{[p]}_{n}}$ by
\begin{equation*}
\mathrm{coker}[\rmH^{1}(\rmX_{d}\otimes {\FF}^{\ac}_{p}, \calO_{\lambda}(1))^{\oplus 2}_{/\frakp^{[p]}_{n}}\xrightarrow{\overline{\alpha}^{\ast}=(\pi^{\ast}_{1,p}, \pi^{\ast}_{2,p})}\rmH^{1}(\rmX_{d}(p)\otimes{\FF}^{\ac}_{p}, \calO_{\lambda}(1))_{/\frakp^{[p]}_{n}}].
\end{equation*}

The Ihara's lemma implies that  $\alpha^{\ast}=(\pi^{\ast}_{1,p}, \pi^{\ast}_{2,p})$ and $\overline{\alpha}^{\ast}$ is injective and hence we have an exact sequence
\begin{equation*}
0\rightarrow \rmH^{1}(\overline{\rmX}_{d}(p)\otimes{\FF^{\ac}_{p}}, \calO_{\lambda}(1))_{\bullet/\frakp^{[p]}_{n}} \xrightarrow{sp} \rmH^{1}(\overline{\rmX}_{d}(p)\otimes{\FF^{\ac}_{p}}, \rmR\Psi(\calO_{\lambda})(1))_{\bullet/\frakp^{[p]}_{n}} \rightarrow \rmH^{1}(\overline{\rmX}_{d}(p)\otimes{\FF^{\ac}_{p}}, \rmR\Phi(\calO_{\lambda})(1))_{/\frakp^{[p]}_{n}} \rightarrow 0.
\end{equation*}
We have a canonical isomorphism
\begin{equation*}
\rmH^{1}(\overline{\rmX}_{d}(p)\otimes{\FF^{\ac}_{p}}, \rmR\Phi(\calO_{\lambda})(1))_{/\frakp^{[p]}_{n}}\cong \Gamma(Z_{d}(\overline{B}), \calO_{\lambda})_{/\frakp^{[p]}_{n}} 
\end{equation*}
and hence it rests to understand $\rmH^{1}(\overline{\rmX}_{d}(p)\otimes{\FF^{\ac}_{p}}, \calO_{\lambda}(1))_{\bullet/\frakp^{[p]}_{n}}$.
Consider the following commutative diagram
\begin{equation*}
\begin{tikzcd}
\mathrm{ker}(\nabla_{n}) \arrow[r]\arrow[d]& \rmH^{0}(\rmX^{\ss}_{d}\otimes{\FF^{\ac}_{p}}, \calO_{\lambda}(1))_{/\frakp^{[p]}_{n}} \arrow[d]  \\
\rmH^{1}(\overline{\rmX}_{d}\otimes{\FF^{\ac}_{p}}, \calO_{\lambda}(1))^{\oplus 2}_{/\frakp^{[p]}_{n}} \arrow[r, "\overline{\alpha}^{\ast}"] \arrow[d, "\nabla_{n}"] & \rmH^{1}(\overline{\rmX}_{d}(p)\otimes{\FF^{\ac}_{p}}, \calO_{\lambda}(1))_{/\frakp^{[p]}_{n}} \arrow[r] \arrow[d, "a^{\ast}_{0}"] & \rmH^{1}(\overline{\rmX}_{d}\otimes{\FF^{\ac}_{p}}, \calO_{\lambda}(1))_{\bullet/\frakp^{[p]}_{n}}\\
\rmH^{1}(\overline{\rmX}_{d}\otimes{\FF^{\ac}_{p}}, \calO_{\lambda}(1))^{\oplus 2}_{/\frakp^{[p]}_{n}} \arrow[r]  & \rmH^{1}(\overline{\rmX}_{d}\otimes{\FF^{\ac}_{p}}, \calO_{\lambda}(1))^{\oplus 2}_{/\frakp^{[p]}_{n}}  \\  
\end{tikzcd}
\end{equation*}
where $a^{\ast}_{0}$ is the pull-back of the normalization map $a_{0}:\overline{\rmX}^{(0)}_{d}(p)=\overline{\rmX}_{d}\sqcup\overline{\rmX}_{d}\rightarrow \overline{\rmX}_{d}(p)$ and the middle column is the natural exact sequence as in \ref{normaliz-exact}.

The map $\nabla_{n}$ is given by the composite $ a^{\ast}_{0}\circ\overline{\alpha}^{\ast}$ which can be represented by the matrix
\begin{equation*}
\begin{pmatrix}
\mathrm{id} &   \Frob_{p}\\
\Frob_{p}\rmS_{p} & \mathrm{id}\\ 
\end{pmatrix}=
\begin{pmatrix}
\mathrm{id} &   \Frob_{p}\\
\Frob_{p} & \mathrm{id}\\ 
\end{pmatrix}
\end{equation*}
and thus $\mathrm{ker}(\nabla_{n})\cong\rmH^{1}(\overline{\rmX}_{d}\otimes{\FF^{\ac}_{p}}, \calO_{\lambda})^{\rmG_{\FF_{p^{2}}}}_{/\frakp^{[p]}_{n}}(1)$. Note that the top horizontal map is nothing but $\Psi_{n}$ by the proof of \cite[Proposition 4.8]{LT} and hence is an isomorphism by Theorem \ref{level-raise-curve}. Then a simple diagram chase shows that $\rmH^{1}(\overline{\rmX}_{d}\otimes{\FF^{\ac}_{p}}, \calO_{\lambda}(1))_{\bullet/\frakp^{[p]}_{n}}$ is isomorphic to 
\begin{equation*}
\coker(\nabla_{n})\cong\rmH^{1}(\FF_{p^{2}}, \rmH^{1}(\overline{\rmX}_{d}\otimes{\FF^{\ac}_{p}}, \calO_{\lambda})_{/\frakp^{[p]}_{n}})(1)
\end{equation*}
which in turn is isomorphic to $\rmH^{1}(\overline{\rmX}_{d}\otimes{\FF^{\ac}_{p}}, \calO_{\lambda})^{\rmG_{\FF_{p^{2}}}}_{/\frakp^{[p]}_{n}}(1)$ via the canonical isomorphism 
\begin{equation*}
\rmH^{1}(\overline{\rmX}_{d}\otimes{\FF^{\ac}_{p}}, \calO_{\lambda})^{\rmG_{\FF_{p^{2}}}}_{/\frakp^{[p]}_{n}}(1)\rightarrow \rmH^{1}(\FF_{p^{2}}, \rmH^{1}(\overline{\rmX}_{d}\otimes{\FF^{\ac}_{p}}, \calO_{\lambda})_{/\frakp^{[p]}_{n}})(1).
\end{equation*} 
Since $\Psi_{n}$ induces an isomorphism
$\Gamma(Z_{d}(\overline{B}), \calO_{\lambda})_{/\frakp^{[p]}_{n}}(1)\cong\rmH^{1}(\overline{\rmX}_{d}\otimes{\FF^{\ac}_{p}}, \calO_{\lambda}(1))_{\bullet/\frakp^{[p]}_{n}}$, we have an exact sequence 
\begin{equation}\label{quotient-exact}
0\rightarrow\Gamma(\rmZ_{d}(\overline{B}), \calO_{\lambda}(1))_{/\frakp^{[p]}_{n}}\rightarrow \rmH^{1}({\rmX}_{d}(p)\otimes{\QQ^{\ac}_{p}}, \calO_{\lambda}(1))_{\bullet/\frakp^{[p]}_{n}}\rightarrow \Gamma(\rmZ_{d}(\overline{B}), \calO_{\lambda})_{/\frakp^{[p]}_{n}}\rightarrow0
\end{equation}
of $\calO_{\lambda, n}[\rmG_{\FF_{p^{2}}}]$-modules. This exact sequence is unramified since the monodromy operator acts by zero by the Picard--Lefschetz formula \ref{picard-lef} and therefore it splits.

Since the composite map $\alpha_{\ast}\alpha^{\ast}$
\begin{equation*}
\rmH^{1}({\rmX}_{d}\otimes{\Qbar}, \calO_{\lambda}(1))^{\oplus2}_{/\frakp^{[p]}_{n}}\xrightarrow{\alpha^{\ast}} \rmH^{1}(\rmX_{d}(p)\otimes{\Qbar}, \calO_{\lambda}(1))_{/\frakp^{[p]}_{n}}\xrightarrow{\alpha_{\ast}} \rmH^{1}({\rmX}_{d}\otimes{\Qbar}, \calO_{\lambda}(1))^{\oplus2}_{/\frakp^{[p]}_{n}}
\end{equation*}
is given by the matrix
\begin{equation*}
\Delta_{n}=
\begin{pmatrix}
p+1 &   \rmT_{p}\\
\rmT_{p} & p+1\\ 
\end{pmatrix}
\end{equation*}
whose cokernel is isomorphic to $\rmH^{1}({\rmX}_{d}\otimes{\Qbar}, \calO_{\lambda}(1))_{/\frakp^{[p]}_{n}}$ by the map $(x, y)\mapsto (p+1)x-\rmT_{p}y$, we have a canonical map 
\begin{equation*}
\alpha_{\ast}: \coker(\alpha^{\ast})=\rmH^{1}({\rmX}_{d}\otimes{\Qbar}, \calO_{\lambda}(1))_{\bullet/\frakp^{[p]}_{n}} \rightarrow \coker(\alpha_{\ast}\alpha^{\ast})\cong\rmH^{1}(\rmX_{d}(p)\otimes{\Qbar}, \calO_{\lambda}(1))_{/\frakp^{[p]}_{n}} 
\end{equation*}
which is surjective by Ihara's lemma. Note that Proposition \ref{Phi-n} and Proposition \ref{canonical-decomposition} furnishes a map of exact sequences
\begin{equation*}
\begin{tikzcd}
\Gamma(\rmZ_{d}(\overline{B}), \calO_{\lambda}(1))_{/\frakp^{[p]}_{n}} \arrow[r] \arrow[d]      & \rmH^{1}({\rmX}_{d}(p)\otimes{\QQ^{\ac}_{p}}, \calO_{\lambda}(1))_{\bullet/\frakp^{[p]}_{n}} \arrow[r] \arrow[d, "\alpha_{\ast}"] & \Gamma(\rmZ_{d}(\overline{B}), \calO_{\lambda})_{/\frakp^{[p]}_{n}} \arrow[d] \\
\Gamma(\rmZ_{d}(\overline{B}), \calO_{\lambda}(1))_{/\frakp^{[p]}_{n}} \arrow[r] & \rmH^{1}({\rmX}_{d}\otimes {\QQ^{\ac}_{p}}, \calO_{\lambda}(1))_{/\frakp^{[p]}_{n}}\arrow[r] & \Gamma(\rmZ_{d}(\overline{B}), \calO_{\lambda})_{/\frakp^{[p]}_{n}}\\  
\end{tikzcd}
\end{equation*}
which has to be an isomorphism as $\alpha_{\ast}$ is a surjective morphism of modules of the same cardinality, we see that 
\begin{equation*}
\rmH^{1}(\rmX_{d}(p)\otimes{\Qbar_{p}}, \calO_{\lambda}(1))_{\bullet/\frakp^{[p]}_{n}}\cong \rmH^{1}(\rmX_{d}\otimes{\Qbar_{p}}, \calO_{\lambda}(1))_{/\frakp_{n}}
\end{equation*}
as desired. 
\end{proof}

\begin{remark}
There is an isomorphism of short exact sequences 
\begin{equation*}
\begin{tikzcd}
&\rmH^{1}(\overline{\rmX}_{d}\otimes{\FF^{\ac}_{p}}, \calO_{\lambda})_{/\frakp_{n}}^{\rmG_{\FF_{p^{2}}}}(1) \arrow[r] \arrow[d]      & \rmH^{1}(\overline{\rmX}_{d}(p)\otimes{\FF^{\ac}_{p}}, \calO_{\lambda}(1))_{\bullet/\frakp^{[p]}_{n}} \arrow[r] \arrow[d, "\alpha_{\ast}"] & \rmH^{0}(\rmX^{\ss}_{d}\otimes{\FF^{\ac}_{p}}, \calO_{\lambda})_{/\frakp^{[p]}_{n}}\arrow[d] \\
&\rmH^{1}(\overline{\rmX}_{d}\otimes{\FF^{\ac}_{p}}, \calO_{\lambda})_{/\frakp_{n}}^{\rmG_{\FF_{p^{2}}}}(1) \arrow[r] & \rmH^{1}(\overline{\rmX}_{d}\otimes {\FF^{\ac}_{p}}, \calO_{\lambda}(1))_{/\frakp^{[p]}_{n}}\arrow[r] & \rmH^{1}(\FF_{p^{2}},\rmH^{1}(\overline{\rmX}_{d}\otimes{\FF^{\ac}_{p}}, \calO_{\lambda}(1))_{/\frakp_{n}})\\  
\end{tikzcd}
\end{equation*}
of $\calO_{\lambda,n}[\rmG_{\FF_{p^{2}}}]$-modules where the first vertical map is the identity map and the third vertical map is $\Phi_{n}$.
\end{remark}

\section{Level raising on triple product of Shimura curves}
\subsection{The triple tensor product Galois representation} Let $\underline{\mathbf{f}}=(f_{1}, f_{2}, f_{3})\in S^{\new}_{2}(\Gamma_{0}(N))^{3}$ be a triple of normalized newforms of level $\Gamma_{0}(N)$ with $q$-expansions: 
\begin{equation*}
\begin{aligned}
&f_{1}=\sum_{n\geq 1} a_{n}(f_{1})q^{n};\\
&f_{2}=\sum_{n\geq 1} a_{n}(f_{2})q^{n};\\
&f_{3}=\sum_{n\geq 1} a_{n}(f_{3})q^{n}.\\
\end{aligned}
\end{equation*} 

We assume $N=N^{+}N^{-}$ such that $(N^{+}, N^{-})=1$ and $N^{-}$ is square-free with \emph{even} number of prime divisors.  For $i\in\{1, 2, 3\}$, let $E_{i}=\QQ(f_{i})$ be the Hecke field of $f_{i}$. Let $\lambda_{i}$ be a place in $E_{i}$ above $\ell$ and write $E_{\lambda_{i}}$ for the completion of $E_{i}$ at $\lambda_{i}$. Let $\calO_{\lambda_{i}}$ be the valuation ring of $E_{\lambda_{i}}$. Let $\lambda_{i}$ be the maximal ideal of $\calO_{\lambda_{i}}$ and denote by $k_{\lambda_{i}}$ its residue field. We denote by 
\begin{equation*}
\rho_{f_{i}, \lambda_{i}}:\rmG_{\QQ}\rightarrow \GL_{2}({E_{\lambda_{i}}})=\GL_{2}(\rmV_{f_{i},\lambda_{i}})
\end{equation*}
the Galois representation attached to $f_{i}$ whose residue representation is denoted by $\overline{\rho}_{f_{i},\lambda_{i}}$. Let $d$ be a common clean prime for all $\overline{\rho}_{f_{i},\lambda_{i}}$. We have the morphism $\phi_{i}: \TT\rightarrow \calO_{\lambda_{i}}$ corresponding to the Hecke eigensystem of $f_{i}$. Let $S$ be a set of primes away from $Nd$. For any integer $n\geq 1$,  we define some ideals of $\TT^{[S]}$ associated to $f_{i}$ as in \ref{ideals} by
\begin{equation*}
\begin{aligned}
&\frakm^{[S]}_{i}=\mathrm{ker}[\TT \xrightarrow{\phi_{i}} \calO_{\lambda}\rightarrow \calO_{\lambda}/\lambda]\cap\TT^{[S]}\\ 
&\frakp^{[S]}_{i, n}=\mathrm{ker}[\TT\xrightarrow{\phi_{i}}\calO_{\lambda}\rightarrow \calO_{\lambda}/\lambda^{n}]\cap\TT^{[S]}.\\
\end{aligned}
\end{equation*}
We introduce the triple of ideals $\frakm^{[S]}_{\triplef}=(\frakm^{[S]}_{1}, \frakm^{[S]}_{2}, \frakm^{[S]}_{3})$ and $\frakp^{[S]}_{\triplef, n}=(\frakp^{[S]}_{i, n}, \frakp^{[S]}_{2, n}, \frakp^{[S]}_{3, n})$.
 By Proposition \ref{multi-one}, $\rmH^{1}(\rmX_{d}\otimes {\QQ^{\ac}}, \calO_{\lambda_{i}}(1))_{\frakm_{i}}$ defines a $\rmG_{\QQ}$-stable $\calO_{\lambda_{i}}$-lattice $\rho_{\calO_{\lambda_{i}}}$ in $\rho_{f_i, \lambda_{i}}$. We denote by $\rho_{\calO_{\lambda_{i}, n}}$ the reduction of $\rho_{\calO_{\lambda_{i}}}$ by $\lambda^{n}_{i}$. Then we have $\rmH^{1}(\rmX_{d}\otimes {\QQ^{\ac}}, \calO_{\lambda_{i}}(1))_{\frakm_{i}}\cong \rho^{\oplus 2}_{\calO_{\lambda_{i}}}$ and $\rmH^{1}(\rmX_{d}\otimes{\QQ^{\ac}}, \calO_{\lambda_{i}}(1))_{/\frakp_{i,n}}\cong \rho^{\oplus 2}_{\calO_{\lambda_{i}, n}}$
by Remark \ref{clean-cohomology}.  

We define the $\calO_{\undlamb}[\rmG_{\QQ}]$-module 
$\rmM(\triplef,d)$ by 
\begin{equation*}
\rmM(\triplef, d)=\threetensor\rmH^{1}(\rmX_{d}\otimes \QQ^{\ac}, \calO_{\lambda_{i}}(1))_{\frakm_{i}}. 
\end{equation*}
And for each integer $n\geq 1$, we define the $\calO_{\undlamb, n}[\rmG_{\QQ}]$-module $\rmM_{n}(\triplef,d)$ by
\begin{equation*}
\rmM_{n}(\triplef, d)= \threetensor\rmH^{1}(\rmX_{d}\otimes {\QQ^{\ac}}, \calO_{\lambda_{i}}(1))_{/\frakp_{i, n}} 
\end{equation*}
where we put $\calO_{\undlamb, n}=\calO_{\lambda_{1},n}\otimes \calO_{\lambda_{2}, n}\otimes \calO_{\lambda_{3}, n}$. Similarly for a prime $p\nmid Nd$, we put 
\begin{equation*}
\rmM^{[p]}(\triplef, d)=\threetensor\rmH^{1}(\rmX_{d}(p)\otimes \QQ^{\ac}, \calO_{\lambda_{i}}(1))_{\frakm^{[p]}_{i}}
\end{equation*}
and for each integer $n\geq 1$, we define the $\calO_{\undlamb, n}[\rmG_{\QQ}]$-module $\rmM^{[p]}_{n}(\triplef,d)$ by
\begin{equation*}
\rmM^{[p]}_{n}(\triplef, d)= \threetensor\rmH^{1}(\rmX_{d}(p)\otimes {\QQ^{\ac}}, \calO_{\lambda_{i}}(1))_{/\frakp^{[p]}_{i, n}}.
\end{equation*}

The Galois module $\rmM(\triplef, d)$ appears naturally in the middle degree cohomology $\rmH^{3}(\rmX^{3}_{d}\otimes{\Qbar}, \calO_{\undlamb}(2))$ of the triple fiber product  $\rmX^{3}_{d}$. By the K\"unneth formula,  the triple tensor product Hecke algebra $\TT\otimes \TT\otimes \TT$ acts on $\rmH^{3}(\rmX^{3}_{d}\otimes{\Qbar}, \calO_{\undlamb}(2))$. Therefore we can localize $\rmH^{3}(\rmX^{3}_{d}\otimes{\Qbar}, \calO_{\undlamb}(2))$ at $\frakm_{\triplef}=(\frakm_{1}, \frakm_{2},\frakm_{3})$. Similarly, the  Galois module $\rmM^{[p]}(\triplef, d)$ appears naturally in $\rmH^{3}(\rmX^{3}_{d}(p)\otimes{\Qbar}, \calO_{\undlamb}(2))$. The triple tensor product Hecke algebra $\TT^{[p]}\otimes \TT^{[p]}\otimes \TT^{[p]}$ acts on $\rmH^{3}(\rmX^{3}_{d}(p)\otimes{\Qbar}, \calO_{\undlamb}(2))$ and therefore we can localize it at the triple of maximal ideals $\frakm^{[p]}_{\triplef}=(\frakm^{[p]}_{1}, \frakm^{[p]}_{2}, \frakm^{[p]}_{3})$.
In fact, we have the following lemma.
\begin{lemma}\label{kunneth}
There is an isomorphism of $\calO_{\undlamb}[\rmG_{\QQ}]$-modules
\begin{equation*}
\rmM(\triplef, d)(-1)\cong \rmH^{3}(\rmX^{3}_{d}\otimes{\Qbar}, \calO_{\undlamb}(2))_{\frakm_{\triplef}}
\end{equation*} 
and an isomorphism of $\calO_{\undlamb, n}[\rmG_{\QQ}]$-modules
\begin{equation*}
\rmM_{n}(\triplef, d)(-1)\cong \rmH^{3}(\rmX^{3}_{d}\otimes{\Qbar}, \calO_{\undlamb}(2))_{/\frakp_{\triplef,n}} 
\end{equation*} 
for each $n\geq 1$. Similarly, there is an isomorphism of $\calO_{\undlamb}[\rmG_{\QQ}]$-modules
\begin{equation*}
\rmM^{[p]}(\triplef, d)(-1)\cong \rmH^{3}(\rmX^{3}_{d}(p)\otimes{\Qbar}, \calO_{\undlamb}(2))_{\frakm^{[p]}_{\triplef}}
\end{equation*} 
and an isomorphism of $\calO_{\undlamb, n}[\rmG_{\QQ}]$-modules
\begin{equation*}
\rmM^{[p]}_{n}(\triplef, d)(-1)\cong \rmH^{3}(\rmX^{3}_{d}(p)\otimes{\Qbar}, \calO_{\undlamb}(2))_{/\frakp^{[p]}_{\triplef,n}}.
\end{equation*}
\end{lemma}
\begin{proof}
This follows from an easy application of the K\"{u}nneth formula using the fact that $\rmH^{0}$ and $\rmH^{2}$ of $\rmX_{d}\otimes{\QQ}^{\ac}$ are both Eisenstein as Hecke modules, see \cite[Lemma 3]{DT}. 
\end{proof}
We define the pull-back map
\begin{equation*}
\underline{\mathbf{a}}^{\ast}: \rmM_{n}(\triplef, d)^{\oplus 8}\rightarrow \rmM^{[p]}_{n}(\triplef, d)
\end{equation*}
to be the map given by
\begin{equation*}
\threetensor\alpha^{\ast}_{i}:\threetensor(\rmH^{1}(\rmX_{d}\otimes \QQ^{\ac}, \calO_{\lambda_{i}}(1))^{\oplus 2}_{/\frakp_{i,n}})\rightarrow \threetensor\rmH^{1}(\rmX_{d}(p)\otimes \QQ^{\ac}, \calO_{\lambda_{i}}(1))_{/\frakp^{[p]}_{i, n}}
\end{equation*}
induced by the pull-back maps $\alpha^{\ast}_{i}=\pi^{\ast}_{1,p}+\pi^{\ast}_{2,p}$ on the $i$-th copy of the tensor product $\threetensor(\rmH^{1}(\rmX_{d}\otimes \QQ^{\ac}, \calO_{\lambda_{i}}(1))^{\oplus 2}_{/\frakp_{i,n}})$.  

We define the push-forward map
\begin{equation*}
\underline{\mathbf{a}}_{\ast}:\rmM^{[p]}_{n}(\triplef, d)\rightarrow \rmM_{n}(\triplef, d)^{\oplus 8}
\end{equation*}
to be the map given by
\begin{equation*}
\threetensor\alpha_{i\ast}:\threetensor\rmH^{1}(\rmX_{d}(p)\otimes \QQ^{\ac}, \calO_{\lambda_{i}}(1))_{/\frakp^{[p]}_{i, n}}\rightarrow \threetensor(\rmH^{1}(\rmX_{d}\otimes \QQ^{\ac}, \calO_{\lambda_{i}}(1))^{\oplus 2}_{/\frakp_{i,n}})
\end{equation*}
induced by the push-forward maps $\alpha_{i\ast}=(\pi_{1,p,\ast}, \pi_{2,p,\ast})$ on the $i$-th copy of the tensor product $\threetensor\rmH^{1}(\rmX_{d}(p)\otimes \QQ^{\ac}, \calO_{\lambda_{i}}(1))_{/\frakp^{[p]}_{i, n}}$.  

We define the new part 
$\rmM^{[p]}_{n}(\triplef, d)^{\bullet}$ of $\rmM^{[p]}_{n}(\triplef, d)$ as $\ker(\underline{\mathbf{a}}_{\ast})$. We also define the new part 
\begin{equation*}
\rmH^{3}(\rmX^{3}_{d}(p)\otimes\QQ^{\ac}, \calO_{\undlamb}(2))^{\bullet}_{/\frakp^{[p]}_{\triplef,n}}
\end{equation*}
of $\rmH^{3}(\rmX^{3}_{d}(p)\otimes\QQ^{\ac},  \calO_{\undlamb}(2))_{/\frakp^{[p]}_{\triplef,n}}$ as the kernel of the map
\begin{equation*}
\underset{?_{1}, ?_{2}, ?_{3}\in\{1, 2\}^{3}}\oplus(\pi_{?_{1, p, \ast}}, \pi_{?_{2,p,\ast}}, \pi_{?_{3,p, \ast}}): \rmH^{3}(\rmX^{3}_{d}(p)\otimes\QQ^{\ac}, \calO_{\undlamb}(2))_{/\frakp^{[p]}_{\triplef,n}}\rightarrow \rmH^{3}(\rmX^{3}_{d}\otimes\QQ^{\ac}, \calO_{\undlamb}(2))^{\oplus 8}_{/\frakp_{\triplef,n}}.
\end{equation*}

\begin{lemma}\label{new-part-threefold}
We have an isomorphism
\begin{equation*}
\rmM^{[p]}_{n}(\triplef, d)^{\bullet}\cong \rmH^{3}(\rmX^{3}_{d}(p)\otimes\QQ^{\ac}, \calO_{\undlamb}(2))^{\bullet}_{/\frakp^{[p]}_{\triplef,n}}
\end{equation*}
of $\calO_{\undlamb,n}[\rmG_{\QQ}]$-modules.
\end{lemma}
\begin{proof}
This is a direct consequence of Lemma \ref{kunneth} and the K\"unneth formula.
\end{proof}

\subsection{Unramified level raising for triple product of Shimura curves}\label{unram-raise} We recall next the definition of an \emph{$n$-admissible prime for $\triplef$} introduced in \cite[Definition 4.3]{Wang}. 
\begin{definition}
Let $n\geq 1$ be an integer. We say that a prime $p$ is \emph{$n$-admissible} for $\triplef=(f_{1}, f_{2}, f_{3})$ if 
\begin{enumerate}
\item $p\nmid N\ell$;
\item $\ell\nmid p^{2}-1$;
\item $\varpi_{i}^{n}\mid p+1-\epsilon_{p, i}a_{p}(f_{i})$ with $\epsilon_{p,i}=\pm1$ for $i\in\{1, 2, 3\}$;
\item $\epsilon_{p, 1}\epsilon_{p, 2}\epsilon_{p, 3}=1$. 
\end{enumerate}
\end{definition} 
Under the assumption \ref{assump} for each member of $\triplef$, there are infinitely many $n$-admissible primes for $\triplef$ by \cite[Lemma 5.5]{Wang}. Let $p$ from here on be an $n$-admissible prime for $\triplef$.

Let $i\in\{1, 2, 3\}$,  we denote by $\overline{\rmX}_{i}$  the $i$-th copy of $\overline{\rmX}_{d}$ in the triple product $\overline{\rmX}^{3}_{d}$  and let
\begin{equation*}
\rmT_{i, n}=\rmH^{1}(\overline{\rmX}_{i}\otimes{\QQ^{\ac}_{p}},\calO_{\lambda_{i}}(1))_{/\frakp_{i,n}}. 
 \end{equation*}
 By Proposition \ref{canonical-decomposition}, we have a natural split exact sequence
 \begin{equation}\label{fil-of-T}
 0\rightarrow \rmT^{+}_{i, n}\rightarrow \rmT_{i, n}\rightarrow \rmT^{-}_{i, n}\rightarrow 0
 \end{equation}
 where 
\begin{enumerate}
\item  $\rmT^{+}_{i, n}$ is given by $\rmH^{1}(\overline{\rmX}_{i}\otimes{\FF^{\ac}_{p}}, \calO_{\lambda_{i}})_{/\frakp_{i,n}}^{\rmG_{\FF_{p^{2}}}}(1)$ which is identified with $\Gamma(\rmZ_{d}(\overline{B}),\calO_{\lambda_{i}}(1))_{/\frakp^{[p]}_{i,n}}$ via the  map $\Psi_{i,n}$ in \ref{Psin};  
\item $\rmT^{-}_{i, n}$ is given by $\rmH^{1}(\FF_{p^{2}},\rmH^{1}(\overline{\rmX}_{i}\otimes{\FF^{\ac}_{p}}, \calO_{\lambda_{i}}(1))_{/\frakp_{i,n}})$ which is identified with $\Gamma(\rmZ_{d}(\overline{B}),\calO_{\lambda_{i}})_{/\frakp^{[p]}_{i, n}}$ via the map $\Phi^{\ast}_{i,n}$ in Definition \ref{Phi-ast-n}. 
\end{enumerate}
By smooth base change theorem, we have $\rmT_{i, n}\cong\rmH^{1}(\overline{\rmX}_{i}\otimes{{\FF}^{\ac}}, \calO_{\lambda_{i}}(1))_{/\frakp_{i,n}}$.

Let $i\in\{1, 2, 3\}$, we denote by $\overline{\rmX}_{i}(p)$ the $i$-th copy of $\overline{\rmX}_{d}(p)$ in the triple product $\overline{\rmX}^{3}_{d}(p)$ and let
\begin{equation*}
\rmT^{\bullet}_{i, n}=\rmH^{1}({\rmX}_{d}(p)\otimes{{\QQ}^{\ac}_{p}}, \calO_{\lambda_{i}}(1))^{\bullet}_{/\frakp_{i,n}} 
\end{equation*}
be the new part of $\rmH^{1}({\rmX}_{d}(p)\otimes{\overline{\QQ}^{\ac}_{p}}, \calO_{\lambda_{i}}(1))_{/\frakp_{i,n}}$. By Proposition \ref{new-old-part}, we have a natural split exact sequence
 \begin{equation}\label{fil-T-new}
 0\rightarrow \rmT^{\bullet+}_{i, n}\rightarrow \rmT^{\bullet}_{i, n}\rightarrow \rmT^{\bullet-}_{i, n}\rightarrow 0
 \end{equation}
 where 
 \begin{enumerate}
 \item $\rmT^{\bullet+}_{i, n}$ is given by $\rmH^{0}(\overline{\rmX}^{\ss}_{d}\otimes{\FF^{\ac}_{p}}, \calO_{\lambda_{i}}(1))_{/\frakp^{[p]}_{i,n}}$ which is naturally isomorphic to $\Gamma(\rmZ_{d}(\overline{B}),\calO_{\lambda_{i}}(1))_{/\frakp^{[p]}_{i,n}}$; 
 \item $\rmT^{\bullet-}_{i, n}$ is given by $\coker(\nabla_{n})\cong \rmH^{1}(\FF_{p^{2}},\rmH^{1}(\overline{\rmX}_{i}\otimes{\FF^{\ac}_{p}}, \calO_{\lambda_{i}}(1))_{/\frakp^{[p]}_{i,n}})$ which is identified with $\Gamma(\rmZ_{d}(\overline{B}),\calO_{\lambda_{i}})_{/\frakp^{[p]}_{i, n}}$ via the map $\Phi^{\ast}_{i,n}$ in Definition \ref{Phi-ast-n}. 
 \end{enumerate}
 By Remark \ref{special-new-part}, we have $\rmT^{\bullet}_{i, n}\cong\rmH^{1}(\overline{\rmX}_{i}(p)\otimes{{\FF}^{\ac}_{p}}, \calO_{\lambda_{i}}(1))^{\bullet}_{/\frakp_{i,n}}$ and we have an isomorphism of exact sequences of $\calO_{\lambda_{i}, n}[\rmG_{\FF_{p^{2}}}]$-modules
\begin{equation}\label{comm-T}
\begin{tikzcd}
0 \arrow[r]& \rmT^{+}_{i, n} \arrow[r] \arrow[d]      & \rmT_{i, n}  \arrow[r] \arrow[d, "\alpha^{\ast}"] &  \rmT^{-}_{i, n }\arrow[d] \arrow[r] &0 \\
0 \arrow[r]&\rmT^{\bullet+}_{i, n} \arrow[r] & \rmT^{\bullet}_{i, n}\arrow[r] & \rmT^{\bullet-}_{i, n}\arrow[r] &0\\  
\end{tikzcd}
\end{equation}
where the first vertical map is $\Psi_{i,n}$ and the third vertical map is the identity map on $\rmH^{1}(\FF_{p^{2}},\rmH^{1}(\overline{\rmX}_{i}\otimes{\FF^{\ac}_{p}}, \calO_{\lambda_{i}}(1))_{/\frakp^{[p]}_{i,n}})$.

Since the integral model $\interX^{3}_{d}$ of $\rmX^{3}_{d}$ aquires good reduction at $p$, $\rmM_{n}(\triplef, d)$ is unramified at $p$ as an $\calO_{\undlamb, n}[\rmG_{\QQ}]$-module and therefore it makes sense to consider $\rmH^{1}(\FF_{p^{2}}, \rmM_{n}(\triplef, d)(-1))$. The following is the analogue of Proposition \ref{level-raise-curve} for the threefold $\rmX^{3}_{d}$ which will be referred to as the unramified level raising theorem for the triple product of Shimura curves.
\begin{proposition}[unramified level raising for triple product]\label{level-raising-decomp}
Let $p$ be an \emph{$n$-admissible} prime for the triple $\triplef$. We assume that each maximal ideal in the triple $\frakm_{\triplef}=(\frakm_{1}, \frakm_{2}, \frakm_{3})$ satisfies Assumption \ref{assump}. 
\begin{enumerate}
\item The map $\underline{\mathbf{a}}^{\ast}$ induces an isomorphism
\begin{equation*}
\underline{\mathbf{a}}^{\ast}: \rmM_{n}(\triplef, d)\cong \rmM^{[p]}_{n}(\triplef, d)^{\bullet}
\end{equation*}
of $\calO_{\undlamb, n}[\rmG_{\QQ}]$-modules which are both unramified at $p$.

\item We have isomorphisms $\Phi_{\triplef, n}$ and $\Phi^{[p]}_{\triplef, n}$ of $\calO_{\undlamb, n}[\rmG_{\FF_{p^{2}}}]$ modules which fit in the following commutative diagram
\begin{equation*}
\begin{tikzcd}
\rmH^{1}(\FF_{p^{2}}, \rmM_{n}(\triplef, d)(-1)) \arrow[r, "\Phi_{\triplef, n}"] \arrow[d, "\underline{\mathbf{a}}^{\ast}"] & \threesum(\threetensor\Gamma(\rmZ_{d}(\overline{B}),\calO_{\lambda_{i}})_{/\frakp^{[p]}_{i,n}})\arrow[d, Rightarrow, no head] \\
\rmH^{1}(\FF_{p^{2}}, \rmM^{[p]}_{n}(\triplef, d)^{\bullet}(-1))\arrow[r, "\Phi^{[p]}_{\triplef, n}"] & \threesum(\threetensor\Gamma(\rmZ_{d}(\overline{B}),\calO_{\lambda_{i}})_{/\frakp^{[p]}_{i,n}} ).                      
\end{tikzcd}
\end{equation*}
\end{enumerate}
\end{proposition}

\begin{proof}
The first part follows from Proposition \ref{new-old-part} and Lemma \ref{new-part-threefold}. The second part follows from the split exact sequences in Proposition \ref{canonical-decomposition} and Proposition \ref{new-old-part}. Indeed, for $(?,\square)\in\{(\emptyset, \emptyset), ([p], \bullet)\}$, we have
\begin{equation*}
\begin{aligned}
\rmM^{?}_{n}(\triplef, d)&= \rmT^{\square}_{1, n}\otimes\rmT^{\square}_{2, n}\otimes \rmT^{\square}_{3, n}\\
&\cong\underset{?_{\ast}\in\{\pm\}}\oplus\rmT^{\square?_{1}}_{1, n}\otimes \rmT^{\square?_{2}}_{2, n}\otimes\rmT^{\square?_{3}}_{3, n} \\
&\cong \rmZ_{n}(\triplef, d)\oplus  \rmZ^{\oplus 3}_{n}(\triplef, d)(1) \oplus  \rmZ^{\oplus 3}_{n}(\triplef, d)(2)\oplus  Z_{n}(\triplef, d)(3)\\
\end{aligned}
\end{equation*}
where we put $\rmZ_{n}(\triplef, d)=\threetensor\Gamma(\rmZ_{d}(\overline{B}),\calO_{\lambda_{i}})_{/\frakp^{[p]}_{i,n}}$. It follows then that
\begin{equation*}
\begin{aligned}
\rmH^{1}(\FF_{p^{2}}, \rmM^{?}_{n}(\triplef, d)(-1))&\cong\rmH^{1}(\FF_{p^{2}}, \rmZ_{n}(\triplef, d)(-1)\oplus \rmZ^{\oplus 3}_{n}(\triplef, d) \oplus \rmZ^{\oplus 3}_{n}(\triplef, d)(1)\oplus \rmZ_{n}(\triplef, d)(2))\\ 
&\cong\rmH^{1}(\FF_{p^{2}}, \rmZ^{\oplus 3}_{n}(\triplef, d))\\
&\cong\threesum(\threetensor\Gamma(\rmZ_{d}(\overline{B}),\calO_{\lambda_{i}})_{/\frakp^{[p]}_{i,n}})\\
\end{aligned}
\end{equation*}
and we define the composite of these isomorphisms as $\Phi^{?}_{\triplef, n}$. The commutativity of the diagram follows from the commutativity of \ref{comm-T}.
\end{proof}

For $(?,\square)\in\{(\emptyset, \emptyset), ([p], \bullet)\}$, we formally define the \emph{weight zero part} $\bfW_{0}\rmM^{?}_{n}(\triplef, d)(-1)$ of $\rmM^{?}_{n}(\triplef, d)(-1)=\threetensor\rmT^{\square}_{i,n}(-1)$ using the split exact sequence \ref{fil-of-T} and the split exact sequence \ref{fil-T-new} such that we have
\begin{equation*}
\begin{aligned}
\bfW_{0}\rmM^{?}_{n}(\triplef, d)(-1)&=\rmT^{\square-}_{1, n}\otimes\rmT^{\square-}_{2, n}\otimes\rmT^{\square+}_{3, n}(-1)\\
&\oplus\rmT^{\square-}_{1, n}\otimes\rmT^{\square+}_{2, n}\otimes\rmT^{\square-}_{3, n}(-1)\\
&\oplus\rmT^{\square-}_{1, n}\otimes\rmT^{\square-}_{2, n}\otimes\rmT^{\square+}_{3, n}(-1)\\
&\cong\threesum (\threetensor\Gamma(\rmZ_{d}(\overline{B}),\calO_{\lambda_{i}})_{/\frakp^{[p]}_{i,n}}).\\
\end{aligned}
\end{equation*}

We note that the map $\Phi_{\triplef, n}$ in the second part of the above theorem comes from composing the following series of maps
\begin{equation*}
\begin{aligned}
\rmH^{1}(\FF_{p^{2}}, \rmM_{n}(\triplef, d)(-1))&\rightarrow \rmH^{1}(\FF_{p^{2}}, {\bfW}_{0}\rmM_{n}(\triplef, d)(-1))\\
&\cong \rmT^{-}_{1, n}\otimes\rmT^{-}_{2, n}\otimes \rmT^{+}_{3, n}(-1)\\
&\oplus\rmT^{-}_{1, n}\otimes\rmT^{+}_{2, n}\otimes \rmT^{-}_{3, n}(-1)\\
&\oplus\rmT^{+}_{1, n}\otimes\rmT^{-}_{2, n}\otimes \rmT^{-}_{3, n}(-1)\\
&\cong \threesum(\threetensor\Gamma(\rmZ_{d}(\overline{B}),\calO_{\lambda_{i}})_{/\frakp^{[p]}_{i,n}})\\
\end{aligned}
\end{equation*}
where we note that the last isomorphism is given by $(\Phi^{\ast}_{n}\otimes\Phi^{\ast}_{n}\otimes\Psi_{n})\oplus(\Phi^{\ast}_{n}\otimes\Psi_{n}\otimes \Phi^{\ast}_{n})\oplus(\Psi_{n}\otimes\Phi^{\ast}_{n}\otimes\Phi^{\ast}_{n})$. Similarly, the map $\Phi^{[p]}_{\triplef, n}$ factors as
\begin{equation*}
\begin{aligned}
\rmH^{1}(\FF_{p^{2}}, \rmM^{[p]}_{n}(\triplef, d)^{\bullet}(-1))&\rightarrow \rmH^{1}(\FF_{p^{2}}, {\bfW}_{0}\rmM^{[p]}_{n}(\triplef, d)^{\bullet}(-1))\\
&\cong \rmT^{\bullet-}_{1, n}\otimes\rmT^{\bullet-}_{2, n}\otimes \rmT^{\bullet+}_{3, n}(-1)\\
&\oplus\rmT^{\bullet-}_{1, n}\otimes\rmT^{\bullet+}_{2, n}\otimes \rmT^{\bullet-}_{3, n}(-1)\\
&\oplus\rmT^{\bullet+}_{1, n}\otimes\rmT^{\bullet-}_{2, n}\otimes \rmT^{\bullet-}_{3, n}(-1)\\
&\cong \threesum(\threetensor\Gamma(\rmZ_{d}(\overline{B}),\calO_{\lambda_{i}})_{/\frakp^{[p]}_{i,n}}).\\
\end{aligned}
\end{equation*}
Note that last isomorphism is given by $(\Phi^{\ast}_{n}\otimes\Phi^{\ast}_{n}\otimes 1)\oplus(\Phi^{\ast}_{n}\otimes 1\otimes \Phi^{\ast}_{n})\oplus(1\otimes\Phi^{\ast}_{n}\otimes\Phi^{\ast}_{n})$.

The previous discussion can be summarized in the following commutative diagram
\begin{equation}\label{a-commu}
\begin{tikzcd}
\rmH^{1}(\FF_{p^{2}}, \rmM_{n}(\triplef, d)(-1) \arrow[r, "\underline{\mathbf{a}}^{\ast}"] \arrow[d] & \rmH^{1}(\FF_{p^{2}}, \rmM^{[p]}_{n}(\triplef, d)^{\bullet}(-1) )\arrow[d] \\
\rmH^{1}(\FF_{p^{2}},{\bfW}_{0}\rmM_{n}(\triplef, d)(-1))\arrow[r, "{\bfW}_{0}\underline{\mathbf{a}}^{\ast}"] \arrow[d] & \rmH^{1}(\FF_{p^{2}},{\bfW}_{0}\rmM^{[p]}_{n}(\triplef, d)^{\bullet}(1)) \arrow[d] \\
\threesum(\threetensor\Gamma(\rmZ_{d}(\overline{B}),\calO_{\lambda_{i}})_{/\frakp^{[p]}_{i,n}})   \arrow[r, Rightarrow, no head]       &\threesum(\threetensor\Gamma(\rmZ_{d}(\overline{B}),\calO_{\lambda_{i}})_{/\frakp^{[p]}_{i,n}}) 
\end{tikzcd}
\end{equation}
The composite of the vertical maps on the left is nothing but $\Phi_{\triplef, n}$ whereas the composite of the right vertical map is nothing but $\Phi^{[p]}_{\triplef, n}$.
The horizontal map ${\bfW}_{0}\underline{\mathbf{a}}^{\ast}$ on the second row is thus the isomorphism given by $(1\otimes1\otimes\Psi_{n})\oplus(1\otimes\Psi_{n}\otimes 1)\oplus (\Psi_{n}\otimes1\otimes1)$.

\begin{remark}\label{special-new}
As  an $\calO_{\undlamb, n}[\rmG_{\FF_{p^{2}}}]$-module, we can identify $\rmM_{n}(\triplef, d)(-1)$ with 
$\rmH^{3}(\overline{\rmX}^{3}_{d}\otimes{\FF}^{\ac}_{p}, \calO_{\undlamb}(2))_{/\frakp^{[p]}_{i,n}}$
by the smooth base change theorem. On the other hand, as an $\calO_{\undlamb, n}[\rmG_{\FF_{p^{2}}}]$-module, $\rmM^{[p]}_{n}(\triplef, d)^{\bullet}(-1)$ can be identified with 
\begin{equation*}
\rmH^{3}(\overline{\rmX}^{3}_{d}(p)\otimes{\FF}^{\ac}_{p}, \calO_{\undlamb}(2))^{\bullet}_{/{\frakp}^{[p]}_{\triplef, n}}=\ker[\rmH^{3}(\overline{\rmX}^{3}_{d}(p)\otimes{\FF}^{\ac}_{p}, \calO_{\undlamb}(2))_{/{\frakp}^{[p]}_{\triplef, n}}\xrightarrow{\underline{\mathbf{a}}_{\ast}} \rmH^{3}(\overline{\rmX}^{3}_{d}\otimes{\FF}^{\ac}_{p}, \calO_{\undlamb}(2))^{\oplus 8}_{/{\frakp}_{\triplef, n}}]
\end{equation*}
by  Remark \ref{special-new-part} and Lemma \ref{new-part-threefold}. Therefore the isomorphism $\underline{\mathbf{a}}^{\ast}$ in Theorem \ref{level-raising-decomp} can be regarded as an isomorphism of the cohomologies on the special fibers. 
\end{remark}

\begin{corollary}\label{level-raise-Q}
Let $p$ be an $n$-admissible prime for $\triplef$ and suppose that each maximal ideal in the triple $\frakm_{\triplef}=(\frakm_{1}, \frakm_{2}, \frakm_{3})$ satisfies Assumption \ref{assump}. Then we have the following isomorphism
\begin{equation*}
\Phi_{\triplef, n}: \rmH^{1}_{\mathrm{fin}}(\QQ_{p}, \rmM_{n}(\triplef, d)(-1))\cong \threesum(\threetensor\Gamma(\rmZ_{d}(\overline{B}),\calO_{\lambda_{i}})_{/\frakp^{[p]}_{i, n}}).
\end{equation*}
\end{corollary}
\begin{proof}
Since $\rmM_{n}(\triplef, d)$ is unramified, we have 
\begin{equation*}
\rmH^{1}_{\mathrm{fin}}(\QQ_{p}, \rmM_{n}(\triplef, d)(-1))\cong\rmH^{1}(\FF_{p}, \rmM_{n}(\triplef, d)(-1)).  
\end{equation*}
By the same argument as in \cite[Proposition 3.8]{Ri-100}, the non-trivial element in $\Gal(\FF_{p^{2}}/\FF_{p})$ acts on the space $\Gamma(\rmZ_{d}(\overline{B}),\calO_{\lambda_{i}})_{/\frakp^{[p]}_{i, n}}$ by $\epsilon_{i}$. Therefore the non-trivial element in $\Gal(\FF_{p^{2}}/\FF_{p})$ acts on the space
$\threesum(\threetensor\Gamma(\rmZ_{d}(\overline{B}),\calO_{\lambda_{i}})_{/\frakp^{[p]}_{i, n}})$
by the product of the sign $(\epsilon_{1}, \epsilon_{2}, \epsilon_{3})$ which is $1$ by the definition of an $n$-admissible prime for $\triplef$. Since the map $\Phi_{\triplef, n}$ commutes with the action of $\Gal(\FF_{p^{2}}/\FF_{p})$ by construction, the conclusion immediately follows. 
\end{proof}

\subsection{Reciprocity laws for Gross--Kudla--Schoen diagonal cycles} In this subsection, we will always fix an $n$-admissible prime $p$ for $\triplef$. Let $\frakm_{\triplef}=(\frakm_{1}, \frakm_{2}, \frakm_{3})$ be the triple of maximal ideals that all satisfy Assumption \ref{assump}. Recall we have the Shimura curve $\rmX_{d}$ over $\QQ$ and its integral model $\interX_{d}$ over $\ZZ[1/Nd]$. We consider the diagonal embedding  $\theta: \interX_{d}\rightarrow \interX^{3}_{d}$ of $\interX_{d}$ whose restriction to its generic fiber $\rmX_{d}$ will be denoted by the same notation. This gives a class 
\begin{equation*}
\Delta_{d}=\theta_{*}[\rmX_{d}]\in \mathrm{CH}^{2}(\rmX^{3}_{d}) 
\end{equation*}
which will be referred to as the \emph{Gross--Kudla--Schoen diagonal cycle}. We consider the cycle class map 
$\mathrm{cl}: \mathrm{CH}^{2}(\rmX^{3}_{d})\rightarrow \rmH^{4}(\rmX^{3}_{d}, \calO_{\undlamb}(2))$.
Since $ \rmH^{4}(\rmX^{3}_{d}\otimes{\QQ^{\ac}}, \calO_{\undlamb}(2))_{\frakm_{\triplef}}$ is zero by the K\"{u}nneth formula and our assumption that $\frakm_{i}$ is abosultely irreducible, the cycle class map induces the $\underline{\lambda}$-adic Abel--Jacobi map
\begin{equation}\label{AJ}
\mathrm{AJ}_{\triplef}: \mathrm{CH}^{2}(\rmX^{3}_{d})\rightarrow \rmH^{1}(\QQ,  \rmH^{3}(\rmX^{3}_{d}\otimes{\QQ^{\ac}}, \calO_{\undlamb}(2))_{\frakm_{\triplef}})\cong \rmH^{1}(\QQ,  \rmM(\triplef, d)(-1))
\end{equation} 
and the mod $\underline{\lambda}^{n}$ Abel--Jacobi map
\begin{equation}\label{AJ-n}
\mathrm{AJ}_{\triplef, n}: \mathrm{CH}^{2}(\rmX^{3}_{d})\rightarrow \rmH^{1}(\QQ,  \rmH^{3}(\rmX^{3}_{d}\otimes{\QQ^{\ac}}, \calO_{\undlamb}(2))_{/\frakp_{\triplef,n}})\cong \rmH^{1}(\QQ,  \rmM_{n}(\triplef, d)(-1))
\end{equation} 
via the Hochschild--Serre spectral sequence of $\rmH^{4}(\rmX^{3}_{d}, \calO_{\undlamb}(2))$. Let 
\begin{equation*}
\Theta_{n}(\triplef,d)=\mathrm{AJ}_{\triplef, n}(\Delta_{d})\in \rmH^{1}(\QQ,  \rmM_{n}(\triplef, d)(-1)).
\end{equation*}
These cohomological classes will be referred to as the \emph{Gross--Schoen--Kudla diagonal classess}. 

\begin{lemma}
Let $p$ be an $n$-admissible prime for $\triplef$. The class 
\begin{equation*}
\loc_{p}(\Theta_{n}(\triplef, d))\in \rmH^{1}(\QQ_{p}, \rmM_{n}(\triplef, d)(-1)) 
\end{equation*}
lies in the finite part $\rmH^{1}_{\mathrm{fin}}(\QQ_{p}, \rmM_{n}(\triplef, d)(-1))\cong \rmH^{1}(\FF_{p}, \rmM_{n}(\triplef, d)(-1))$ of  $\rmH^{1}(\QQ_{p}, \rmM_{n}(\triplef, d)(-1))$. 
\end{lemma}
\begin{proof}
This follows immediately from the fact that the threefold $\interX^{3}_{d}$ admits good reduction at the prime $p$. See \cite[Lemma 3.4]{Liu-HZ}. 
\end{proof}
The above lemma allows us to consider the class $\loc_{p}(\Theta_{n}(\triplef, d))$ as an element in $\threesum(\threetensor\Gamma(\rmZ_{d}(\overline{B}), \calO_{\lambda_{i}})_{/\frakp^{[p]}_{i,n}})$
via the isomorphism $\Phi_{\triplef, n}$ in Corollary \ref{level-raise-Q}. We will denote by 
$\loc^{(j)}_{p}(\Theta_{n}(\triplef, d))$
the component of $\loc_{p}(\Theta_{n}(\triplef, d))$ in the $j$-th copy of $\threesum(\threetensor\Gamma(\rmZ_{d}(\overline{B}), \calO_{\lambda_{i}})_{/\frakp^{[p]}_{i, n}})$. 

On the other hand, we have a bilinear pairing 
\begin{equation*}
(\hphantom{a}, \hphantom{b}): \threetensor\Gamma(\rmZ_{d}(\overline{B}), \calO_{\lambda_{i}})\times  \threetensor\Gamma(\rmZ_{d}(\overline{B}), E_{\lambda_{i}}/\calO_{\lambda_{i}})\rightarrow \calO_{\undlamb}
\end{equation*}
given by the formula
\begin{equation*}
(\threetensor\zeta_{i}, \threetensor\phi_{i})=\sum_{(z_{1}, z_{2}, z_{3})\in Z_{d}(\overline{B})^{3}}\zeta_{1}\phi_{1}(z_{1})\otimes\zeta_{2}\phi_{2}(z_{2})\otimes\zeta_{3}\phi_{3}(z_{3})
\end{equation*}
for $\threetensor\zeta_{i}\in  \threetensor\Gamma(\rmZ_{d}(\overline{B}), \calO_{\lambda_{i}})$ and $\threetensor\phi_{i}\in \threetensor\Gamma(\rmZ_{d}(\overline{B}), E_{\lambda_{i}}/\calO_{\lambda_{i}})$ induced by the Poincar\'e duality on $\rmZ_{d}(\overline{B})^{3}$. This paring gives rise naturally to a pairing 
\begin{equation*}
 (\hphantom{a}, \hphantom{b}): \threetensor\Gamma(\rmZ_{d}(\overline{B}), \calO_{\lambda_{i}})_{/\frakp^{[p]}_{i, n}}\times\threetensor\Gamma(\rmZ_{d}(\overline{B}), E_{\lambda_{i}}/\calO_{\lambda_{i}})[\frakp^{[p]}_{i, n}]\rightarrow \calO_{\undlamb, n}.
\end{equation*}

Following the terminology in \cite{BD}, we will refer to the formula in the theorem below as the \emph{second explicit reciprocity law} for the Gross--Kudla--Schoen diagonal classes.
\begin{theorem}[The second reciprocity law]\label{2-law}
Let $p$ be an $n$-admissible prime for $\triplef$. Suppose each maximal ideal in $\frakm_{\triplef}$ satisfies Assumption \ref{assump}. Then the following formula
\begin{equation*}
(\loc^{(j)}_{p}(\Theta_{n}(\triplef, d)), \phi_{1}\otimes\phi_{2}\otimes\phi_{3})=\sum_{z\in \Delta_{d}(\overline{B})} \phi_{1}(z)\otimes \phi_{2}(z)\otimes \phi_{3}(z)
\end{equation*}
holds for any $\phi_{1}\otimes \phi_{2}\otimes \phi_{2}\in \threetensor\Gamma(Z_{d}(\overline{B}),E_{\lambda_{i}}/ \calO_{\lambda_{i}})[\frakp^{[p]}_{i, n}]$ and $j\in\{1, 2, 3\}$. 
\end{theorem}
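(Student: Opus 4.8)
The plan is to compute the local class $\loc_p(\Theta_n)$ geometrically on the special fiber at $p$ and to match it, under the level-raising isomorphism $\Phi_{\triplef,n}$ of Corollary~\ref{level-raise-Q}, with the push-forward along the diagonal of Shimura sets $\vartheta\colon X^D\to(X^D)^3$ of the constant function $\mathbf{1}_{X^D}$. The reciprocity formula then follows from a direct pairing computation.

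\emph{Step 1: reduction to the special fiber.} Since $\interX^3$ is smooth and proper over $\ZZ_{p^2}$, the class $\loc_p(\Theta_n)$ lies in $\rmH^1_{\mathrm{fin}}(\QQ_p,\rmM_n(\triplef)(-1))\cong\rmH^1(\FF_{p^2},\rmM_n(\triplef)(-1))$ by the preceding lemma, and by the standard description of the finite part in the good-reduction case it equals the image of the reduction of $\theta_*[\interX\otimes\QQ]$, a cycle in $\mathrm{CH}^2(X^3_{\FF_{p^2}})$, under the finite-field Abel--Jacobi map
\begin{equation*}
\mathrm{AJ}^{\FF_{p^2}}\colon \mathrm{CH}^2(X^3_{\FF_{p^2}})\longrightarrow \rmH^1\big(\FF_{p^2},\rmH^3(X^3_{\FF^{\ac}_p},\ZZ_l(2))_{\frakm_{\triplef}}\big)\xrightarrow{\ \sim\ }\rmH^1\big(\FF_{p^2},\rmM_n(\triplef)(-1)\big)
\end{equation*}
coming from the Hochschild--Serre spectral sequence for $X^3_{\FF^{\ac}_p}/\FF_{p^2}$; here the secondary class exists because $\rmH^4(\interX^3\otimes\QQ^{\ac},\ZZ_l(2))_{\frakm_{\triplef}}$ vanishes and the Künneth components $\rmH^{\le 2}$, $\rmH^{\ge 4}$ of the curves are Eisenstein, as recorded above in the construction of $\mathrm{AJ}_{\triplef,n}$. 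Compare \cite[Lemma 3.4]{Liu-HZ}.

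\emph{Step 2: unwinding $\Phi_{\triplef,n}$ against the diagonal.} Recall that $\Phi_{\triplef,n}$ is assembled from the single-curve maps $\Phi_{i,n}$ of Theorem~\ref{level-raise-curve} and the maps $\Phi^{\heartsuit}_{i,n}$ built from Corollary~\ref{dual-level-raise-curve}, and that the content of the proof of Theorem~\ref{level-raise-curve} is that $\Phi_{i,n}$ is the coboundary of the excision sequence for the supersingular locus $X^{\ss}_{\FF^{\ac}_p}\subset X_{\FF^{\ac}_p}$, computed on the supersingular class by $x\mapsto(1-\Frob_p^2)\tilde x$ after lifting through $\rmH^1(X^{\ord}_{\FF^{\ac}_p},\calO_i(1))$. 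The geometric input is that the diagonal $\theta$ restricts over the ordinary locus to the diagonal $X^{\ord}\hookrightarrow(X^{\ord})^3$ and over the supersingular locus to $\vartheta\colon X^D\hookrightarrow(X^D)^3$; since the maps $\Phi_{i,n}$ detect exactly the supersingular part of a cycle class (as already in the identity $\mathrm{AJ}_f([X^{\ss}])=\Phi(\mathbf{1}_{X^{\ss}})$ from \S 2.3), chasing the reduction of $\theta_*[\interX\otimes\QQ]$ through the three excision sequences identifies $\Phi_{\triplef,n}^{-1}(\loc_p(\Theta_n))$, in each of its three components $j=1,2,3$, with $\vartheta_*(\mathbf{1}_{X^D})$; the three components give the same answer because $\theta$ is symmetric under permutation of the factors while the components of $\Phi_{\triplef,n}$ differ only in which factor carries the twist $\Phi^{\heartsuit}$. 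I would prove this by repeating the diagram chase in the proof of Theorem~\ref{level-raise-curve} on the triple product, in close parallel with the proof of the first reciprocity law (Theorem~\ref{rami-intro}) in \cite{Wang}.

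\emph{Step 3: the pairing, and the main obstacle.} For $(z_1,z_2,z_3)\in(X^D)^3$ one has $\vartheta_*(\mathbf{1}_{X^D})(z_1,z_2,z_3)=1$ if $z_1=z_2=z_3$ and $0$ otherwise, i.e. $\vartheta_*(\mathbf{1}_{X^D})=\sum_{z\in X^D}\mathbf{1}_{\{z\}}\otimes\mathbf{1}_{\{z\}}\otimes\mathbf{1}_{\{z\}}$ under $\Gamma((X^D)^3,\calO)\cong\Gamma(X^D,\calO)^{\otimes 3}$; applying the pairing $(\hphantom{a},\hphantom{b})$ defined above to $\phi_1\otimes\phi_2\otimes\phi_3\in\otimes^3_{i=1}\Gamma(X^D,\calO_{E_{\lambda_i}})[I^{[p]}_{i,n}]$ then gives
\begin{equation*}
(\loc^{(j)}_p(\Theta_n),\phi_1\otimes\phi_2\otimes\phi_3)=(\vartheta_*(\mathbf{1}_{X^D}),\phi_1\otimes\phi_2\otimes\phi_3)=\sum_{z\in X^D}\phi_1(z)\otimes\phi_2(z)\otimes\phi_3(z),
\end{equation*}
which is the asserted formula. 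The main obstacle is Step 2: one must commute the diagonal embedding past the excision triangles for the supersingular loci and past the push-forward $(\pi_{1*},\pi_{2*})$ used to pin down $\Phi_{i,n}$ — objects that themselves live on the semistable model $\interX_0(p)$ and its monodromy (Rapoport--Zink) spectral sequence — while tracking the $\Frob_{p^2}$-actions and discarding the Eisenstein Künneth pieces. This bookkeeping is the technical heart of the argument, and I expect it to run essentially parallel to \cite{LT} and \cite{Wang}.
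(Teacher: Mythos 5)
Your proposal follows essentially the same route as the paper's proof: reduce $\loc_p(\Theta_n)$ to the Abel--Jacobi image of the reduced diagonal cycle on the special fiber (good reduction), identify it under $\Phi_{\triplef,n}$ — built from the cycle-class/coboundary maps $\Phi_{i,n}$ and $\Phi^{\heartsuit}_{i,n}$ — with the push-forward of $\mathbf{1}_{X^D}$ along $\vartheta\colon X^D\to (X^D)^3$, and then evaluate the pairing termwise to get $\sum_{z\in X^D}\phi_1(z)\otimes\phi_2(z)\otimes\phi_3(z)$. The compatibility you flag in Step 2 as the technical heart is exactly the point the paper also settles by appealing to the fact that $\Phi_{1,n}\otimes\Phi_{2,n}\otimes\Phi^{\heartsuit}_{3,n}$ is induced by the cycle class map and that $\theta$ restricts to $\vartheta$ on the supersingular locus, so your plan matches the paper's argument in both structure and level of detail.
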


Let $\overline{\theta}: \overline{\rmX}_{d}\rightarrow \overline{\rmX}^{3}_{d}$ be the map induced by $\theta: \interX_{d}\rightarrow\interX^{3}_{d}$ on the special fiber. Consider the Abel--Jacobi map 
\begin{equation*}
{\AJ}_{\triplef, n}: \mathrm{CH}^{2}(\overline{\rmX}^{3}_{d})\rightarrow \rmH^{1}(\FF_{p^{2}}, \rmM_{n}(\triplef, d)(-1)) 
\end{equation*}
for $\rmXbar^{3}_{d}$ constructed in the same way as in \ref{AJ-n}. Let $\overline{\Delta}_{d}=[\overline{\theta}_{\ast}\overline{\rmX}_{d}]\in  \mathrm{CH}^{2}(\overline{\rmX}^{3}_{d})$ be the diagonal element, we denote by 
\begin{equation}\label{modtheta}
\overline{\Theta}_{n}(\triplef, d) 
\end{equation}
the class given by $\mathrm{AJ}_{\triplef, n}(\overline{\Delta}_{d})\in \rmH^{1}(\FF_{p^{2}}, \rmM_{n}(\triplef,d)(-1))$ and define 
\begin{equation}\label{j-theta}
\overline{\Theta}_{n}^{(j)}(\triplef, d) 
\end{equation}
to be the projection to the $j$-th component of $\threesum(\threetensor\Gamma(\rmZ_{d}(\overline{B}), \calO_{\lambda_{i}})_{/\frakp^{[p]}_{i,n}})$
using the isomorphism $\Phi_{\triplef, n}$ for $j\in\{1, 2, 3\}$.  Since $\interX^{3}_{d}$ has good reduction at $p$ and thus $\rmM_{n}(\triplef,d)(-1)$ is unramified at $p$, the class $\overline{\Theta}_{n}(\triplef, d)$ agrees with $\loc_{p}(\Theta_{n}(\triplef, d))$. Similarly, we can identify  $\overline{\Theta}^{(j)}_{n}(\triplef, d)$ with $\loc^{(j)}_{p}(\Theta_{n}(\triplef, d))$.
To finish the proof, we will show, without loss of generality, 
\begin{equation}\label{1-finish}
\begin{aligned}
(\bar{\Theta}^{(1)}_{n}(\triplef, d), \phi_{1}\otimes\phi_{2}\otimes\phi_{3})=\sum_{z\in \Delta_{d}(\overline{B})} \phi_{1}(z)\otimes \phi_{2}(z)\otimes \phi_{3}(z).
\end{aligned}
\end{equation}

Recall that the class  $\overline{\Theta}^{(1)}_{n}(\triplef, d)$ is the image of $\overline{\Theta}_{n}(\triplef, d)$ under the following maps 
\begin{equation*}
\begin{aligned}
\rmH^{1}(\FF_{p^{2}}, \rmM_{n}(\triplef, d)(-1))&= \rmH^{1}(\FF_{p^{2}}, \rmT_{1, n}\otimes\rmT_{2, n}\otimes \rmT_{3, n}(-1))\\
&\rightarrow\rmH^{1}(\FF_{p^{2}},\rmT^{+}_{1, n}(-1)\otimes \rmT^{-}_{2, n}\otimes\rmT^{-}_{3, n}) \\
&\xrightarrow{\sim} \rmT^{+}_{1, n}(-1)\otimes \rmT^{-}_{2, n}\otimes\rmT^{-}_{3, n} \\
&\xrightarrow{\sim}\Gamma(\rmZ_{d}(\overline{B}),\calO_{\lambda_{1}})_{/\frakp^{[p]}_{1, n}}\otimes \Gamma(\rmZ_{d}(\overline{B}),\calO_{\lambda_{2}})_{/\frakp^{[p]}_{2, n}}\otimes \Gamma(\rmZ_{d}(\overline{B}),\calO_{\lambda_{3}})_{/\frakp^{[p]}_{3, n}}\\
&\xrightarrow{\sim}\Gamma(\rmZ_{d}(\overline{B})^{3}, \calO_{\undlamb})_{/\triplepnp}.
\end{aligned}
\end{equation*}
We note that the second to last isomorphism is given by $\Psi_{n}\otimes\Phi^{\ast}_{n}\otimes\Phi^{\ast}_{n}$.
We would like to prove that the element $\overline{\Theta}^{(1)}_{n}(\triplef, d)\in \Gamma(\rmZ_{d}(\overline{B})^{3}, \calO_{\undlamb})_{/\frakp^{[p]}_{\triplef,n}}$ agrees with the characteristic function 
\begin{equation*}
\mathbf{1}_{\overline{B}}\in \Gamma(Z_{d}(\overline{B})^{3}, \calO_{\undlamb})_{/\triplepnp} 
\end{equation*}
of the diagonal cycle 
\begin{equation}\label{def-diagonal}
\Delta_{d}(\overline{B})=\vartheta_{\ast}\rmZ_{d}(\overline{B}) 
\end{equation}
under the diagonal embedding $\vartheta:Z_{d}(\overline{B})\rightarrow \rmZ_{d}(\overline{B})^{3}$.  Once this is proved, we have
\begin{equation*}
\begin{aligned}
(\bar{\Theta}^{(1)}_{n}(\triplef, d), \phi_{1}\otimes\phi_{2}\otimes\phi_{3})&=(\mathbf{1}_{\overline{B}}, \phi_{1}\otimes\phi_{2}\otimes\phi_{3})\\
&=\sum_{z\in \Delta_{d}(\overline{B})} \phi_{1}(z)\otimes \phi_{2}(z)\otimes \phi_{3}(z).\\
\end{aligned}
\end{equation*} 

To prove this, we will use the isomorphism $\underline{\mathbf{a}}^{\ast}:\rmM_{n}(\triplef, d)(-1)\cong\rmM^{[p]}_{n}(\triplef, d)^{\bullet}(-1)$ in Lemma \ref{level-raising-decomp} and consider the space $\rmM^{[p]}_{n}(\triplef, d)^{\bullet}(-1)$ as a subspace of $\rmH^{3}(\overline{\rmX}^{3}_{d}(p)\otimes\FF^{\ac}_{p}, \rmR\Psi(\calO_{\undlamb})(2))_{/\frakp^{[p]}_{\triplef,n}}$.
This leads to the study the monodromy filtration on $\rmH^{3}(\overline{\rmX}^{3}_{d}(p)\otimes\FF^{\ac}_{p}, \rmR\Psi(\calO_{\undlamb})(2))_{\frakm^{[p]}_{\triplef}}$ using the localized weight spectral sequence which we will treat in the next section.

\section{Proof of the second reciprocity laws}
\subsection{Semistable model of triple product of Shimura curves}
Let $p$ be a prime. We recall the construction of a strict semistable model of the triple product $\rmX^{3}_{d}(p)$  over $\ZZ_{p^{2}}$ following \cite{GS}. 
Let $\mathfrak{X}^{3}_{d}(p)$ be the triple fiber product of $\mathfrak{X}_{d}(p)$ over $\ZZ_{p^{2}}$.  First, we analyze the reduction of  $\mathfrak{X}^{3}_{d}(p)$. We denote by $\overline{\rmX}^{3}_{d}(p)$ the special fiber of  $\mathfrak{X}^{3}_{d}(p)$. By Lemma \ref{gamma0p}, we know each $\overline{\rmX}_{d}(p)$ can be described as the union $\overline{\rmX}_{+}\cup \overline{\rmX}_{-}$ where $\rmXbar_{+}:=\rmXbar_{+,d}$ and $\rmXbar_{-}:=\rmXbar_{-,d}$ are both isomorphic to $\rmXbar_{d}$ glued along the supersingular locus $\rmXbar_{\pm}:=\rmXbar_{\pm,d}\cong \rmXbar^{\ss}_{d}$ and therefore the special fiber $\overline{\rmX}^{3}_{d}(p)$ can be described by the cube given below.
\begin{equation*}
\begin{tikzpicture}
  \matrix (m) [matrix of math nodes, row sep=2em,
    column sep=2em]{
     &  {\rmXbar}^{045}_{[+-+]} & &  {\rmXbar}^{024}_{[+++]}\\
         {\rmXbar}^{015}_{[--+]}& &    {\rmXbar}^{012}_{[-++]} & \\
    &  {\rmXbar}^{345}_{[+--]}& &  {\rmXbar}^{234}_{[++-]} \\
        {\rmXbar}^{135}_{[---]} & &     {\rmXbar}^{123}_{[-+-]} & \\}  ;
     \path
    (m-1-2) edge (m-1-4) edge (m-2-1) edge (m-3-2) 
    (m-1-4) edge (m-3-4) edge (m-2-3) 
    (m-2-1) edge [-,line width=6pt,draw=white] (m-2-3) edge (m-2-3) edge (m-4-1) 
    (m-3-2) edge  (m-3-4) edge (m-4-1) 
    (m-4-1) edge (m-4-3) 
    (m-3-4) edge (m-4-3) 
    (m-2-3) edge [-,line width=6pt,draw=white] (m-4-3) edge (m-4-3) ;
\end{tikzpicture}
\end{equation*}
We will explain the meaning of the simplices in this cube.
\begin{itemize}
\item The $0$-simplices are the vertices of the cube. They correspond to $3$-dimensional strata in $\rmXbar^{3}_{d}(p)$. Consider the vertex labeled by $\rmXbar^{123}_{[-+-]}$ for example. The superscript $123$ has no real meaning and is simply used for ordering the vertices. This ordering is inherited from Liu's paper \cite{Liu-cubic} where the labels have real meanings in terms of achimedean places of a cubic field. The subscript $[-+-]$ means that $\rmXbar^{123}_{[-+-]}$ is of the form
\begin{equation*}
\rmXbar_{-}\times \rmXbar_{+}\times \rmXbar_{-}. 
\end{equation*}
\item The $1$-simplices are the edges of the cube. They correspond to $2$-dimensional strata in $\rmXbar^{3}_{d}(p)$. For example, we will label the edge between  ${\rmXbar}^{135}_{[---]}$ and ${\rmXbar}^{123}_{[-+-]}$ by $\rmXbar^{1235}_{[-\pm-]}$. This means we will take the union on the superscript and take the intersection on the subscript. Then $\rmXbar^{1235}_{[-\pm-]}$ is of the form
\begin{equation*}
\rmXbar_{-}\times \rmXbar_{\pm} \times \rmXbar_{-}.
\end{equation*}
\item The $2$-simplices are the faces of the cube. They correspond to $1$-dimensional strata in $\rmXbar^{3}_{d}(p)$. We use similar convention as in the last point. For example
\begin{equation*}
\rmXbar^{01235}_{[-\pm\pm]}=\rmXbar_{-}\times \rmXbar_{\pm} \times  \rmXbar_{\pm}.
\end{equation*}
\item Finally, the $3$-simplex is the zero dimensional components given by 
\begin{equation*}
\rmXbar^{012345}_{[\pm\pm\pm]}=\rmXbar_{\pm}\times \rmXbar_{\pm} \times \rmXbar_{\pm}.
\end{equation*}
\end{itemize}
We will sometimes drop the subscripts or the superscripts to simplify the notations. By an easy computation on the local rings, we see that $\mathfrak{X}^{3}_{d}(p)$ is not regular. Following the procedure in \cite[Example 6.15]{GS}, we can obtain a strict semistable model denoted by $\mathfrak{Y}_{d}(p)$  of $\mathfrak{X}^{3}_{d}(p)$ over $\ZZ_{p^{2}}$. More precisely, to obtain $\mathfrak{Y}_{d}(p)$, we blow-up $\mathfrak{X}^{3}_{d}(p)$ along the closed subscheme $\rmXbar^{135}$, then we blow-up the strict transform of $\rmXbar^{024}$. We denote by $\pi: \mathfrak{Y}_{d}(p)\rightarrow \mathfrak{X}^{3}_{d}(p)$ the natural morphism between these two schemes given by the aforementioned process. The generic fiber $\mathfrak{Y}_{d}(p)$ agree with the generic fiber $\rmX^{3}_{d}(p)$ of $\mathfrak{X}^{3}_{d}(p)$. The special fiber of $\mathfrak{Y}_{d}(p)$ will be denoted by $\rmYbar=\rmYbar_{d}(p)$ and its reduction complex can be described by the following cube. The densely dotted line on the cube correspond to the new intersections between three dimensional strata caused by the blow-ups and they give new two dimensional strata. 
\begin{equation}\label{primitive-cube}
\begin{tikzpicture}
  \matrix (m) [matrix of math nodes, row sep=2em,
    column sep=2em]{
     &  {\rmYbar}^{045}_{[+-+]} \vphantom{f^{*}}  & &   {\rmYbar}^{024}_{[+++]}\hphantom{EEE}\\
      {\rmYbar}^{015}_{[--+]} \vphantom{f^{*}}& &         {\rmYbar}^{012}_{[-++]} \hphantom{Ee}& \\
    & {\rmYbar}^{345}_{[+--]} \vphantom{U}  & &       {\rmYbar}^{234}_{[++-]} \vphantom{E}\\
       {\rmYbar}^{135}_{[---]}  \vphantom{M} & &    {\rmYbar}^{123}_{[-+-]} \vphantom{N} & \\}  ;
     \path
    (m-1-2) edge (m-1-4) edge (m-2-1) edge (m-3-2) edge[densely dotted](m-4-1) 
    (m-1-4) edge (m-3-4) edge (m-2-3) edge[densely dotted] (m-3-2) edge[densely dotted](m-4-1)  edge[densely dotted] (m-2-1) edge[densely dotted] (m-4-3)
    (m-2-1) edge [-,line width=6pt,draw=white] (m-2-3) edge (m-2-3) edge (m-4-1) 
    (m-3-2) edge  (m-3-4) edge (m-4-1) 
    (m-4-1) edge (m-4-3) 
    (m-3-4) edge (m-4-3) edge[densely dotted](m-4-1)
    (m-2-3) edge [-,line width=6pt,draw=white] (m-4-3) edge (m-4-3) edge[densely dotted](m-4-1) ;
\end{tikzpicture}
\end{equation}
We will explain the meaning of some of the simplices of this cube. 
\begin{itemize}
\item The $0$-simplices are the vertices of the cube. They correspond to $3$-dimensional strata in ${\rmYbar}$. These are the strict transform of the corresponding strata in $\rmXbar^{3}_{d}(p)$. For example, $\rmYbar^{012}$ is the strict transform of $\rmXbar^{012}$ under $\pi$. 

\item The $1$-simplices are the edges on the cube. They correspond to $2$-dimensional strata in ${\rmYbar}$. Notice that there are three types of edges: 
\begin{enumerate}
\item those correspond to the original edges in the cube for $\rmXbar^{3}_{d}(p)$, for example $\rmYbar^{0125}=\rmYbar^{012}\cap \rmYbar^{015}$; 
\item those correspond to the faces in the  cube for $\rmXbar^{3}_{d}(p)$, for example $\rmY^{01235}=\rmYbar^{012}\cap \rmYbar^{135}$; 
\item the one correspond to the ``main diagonal" of the cube $\rmYbar^{012345}=\rmYbar^{024}\cap \rmYbar^{135}$.
\end{enumerate}
\end{itemize}

\begin{proposition} \label{Ybar}
We can describe the strata of $\rmYbar$ in terms of the strata of $\rmXbar^{3}_{d}(p)$. The three dimensional strata are given by the following list.
\begin{enumerate}
\item The three dimensional stratum 
\begin{equation*}
\rmYbar^{i(i+1)(i+2)}
\end{equation*}
is the blow-up of $\rmXbar^{i(i+1)(i+2)}$ along the one dimensional strata $\rmXbar^{i(i+1)(i+2)(i+3)(i+5)}$ for $i\in \{0,1, 2, 3, 4, 5\}$. For example, the stratum $\rmYbar^{012}$ is the blow-up of $\rmXbar^{012}=\rmXbar_{[-++]}$ along $\rmX^{01235}=\rmX_{[-+\pm]}$ whose exceptional divisor is given by the $\PP^{1}$-bundle over $\rmXbar_{[-\pm\pm]}$;

\item The three dimensional stratum $$\rmYbar^{024}$$
is the blow-up of $\rmXbar^{024}$ along the zero dimensional stratum $\rmXbar^{012345}$ followed by the blow-up of the strict transform of $\rmXbar^{01234}\cup \rmXbar^{01245}\cup \rmXbar^{02345}$. The exceptional divisor is given by a $\PP^{2}$-bundle over $\rmXbar_{[\pm\pm\pm]}$ and a $\PP^{1}$-bundle over $\rmXbar_{[+\pm\pm]}\cup\rmXbar_{[\pm+\pm]}\cup\rmXbar_{[\pm\pm+]}$;

\item The three dimensional stratum $$\rmYbar^{135}$$
is the blow-up of $\rmXbar^{135}$ along the zero dimensional stratum $\rmXbar^{012345}$ followed by the blow-up of the strict transform of $\rmXbar^{01235}\cup \rmXbar^{01345}\cup \rmXbar^{12345}$. The exceptional divisor is given by a $\PP^{2}$-bundle over $\rmXbar_{[\pm\pm\pm]}$ and a $\PP^{1}$-bundle over $\rmXbar_{[-\pm\pm]}\cup\rmXbar_{[\pm-\pm]}\cup\rmXbar_{[\pm\pm-]}$.
\end{enumerate}

The two dimensional strata are given by the following list.
\begin{enumerate}
\item The two dimensional stratum $$\rmYbar^{i(i+1)(i+2)(i+3)}$$
maps isomorphically to $\rmXbar^{i(i+1)(i+2)(i+3)}$ for $i\in\{0, 1, 2, 3, 4, 5\}$.

\item The two dimensional stratum $$\rmYbar^{i(i+1)(i+2)(i+4)}$$
is the blow-up of $\rmXbar^{i(i+1)(i+2)(i+4)}$ along $\rmXbar^{012345}$ for $i\in\{0, 1, 2, 3, 4, 5\}$. For example, the stratum $\rmYbar^{0124}$ is the blow-up of $\rmXbar^{0124}=\rmXbar_{[\pm++]}$ along $\rmXbar_{[\pm\pm\pm]}$ whose exceptional locus is given by a $\PP^{1}$-bundle over $\overline{\rmX}_{[\pm\pm\pm]}$.

\item The two dimensional stratum $$\rmYbar^{i(i+1)(i+2)(i+3)(i+5)}$$
is a $\PP^{1}$-bundle over $\rmXbar^{i(i+1)(i+2)(i+3)(i+5)}$ for $i\in\{0, 1, 2, 3, 4, 5\}$. In fact, $\rmYbar^{i(i+1)(i+2)(i+3)(i+5)}$
is the exceptional divisor of the blow-up 
$\pi: \rmYbar^{i(i+1)(i+2)}\rightarrow \rmXbar^{i(i+1)(i+2)}$. For example, $\rmYbar^{01235}$ is a $\PP^{1}$-bundle over $\rmXbar^{01235}=\rmXbar_{[-\pm\pm]}$.

\item The two dimensional stratum 
\begin{equation*}
\rmYbar^{012345}
\end{equation*}
is a $\PP^{2}$-bundle over $\rmXbar^{012345}=\rmXbar_{[\pm\pm\pm]}$.
\end{enumerate}
Moreover all the strata of dimension $2$ or dimension $3$ are given in the above list.

The one dimensional strata are given by the following list.
\begin{enumerate}
\item There is a stratum of the form $\rmXbar^{01234}=\rmXbar_{[\pm+\pm]}$ with multiplicity $2$, similarly for the stratum labelled by $01245$, $02345$, $01345$, $12345$, $01234$;
\item There is a stratum which is a $\PP^{1}$-bundle over $\rmXbar^{012345}=\rmXbar_{[\pm\pm\pm]}$ with multiplicity $6$.
\end{enumerate}
The zero dimensional strata are given by $6$ copies of $\rmXbar^{012345}=\rmXbar_{[\pm\pm\pm]}$.
\end{proposition}

\begin{proof}
The proof of these statements are exactly the same as these given in \cite[Proposition B.39]{Liu-cubic}. Although we are working with different Shimura varities, the underlying local models are the same. 
\end{proof}
\def\calOlamb{\calO_{\undlamb}}
\subsection{Monodromy filtration of the nearby cycle cohomology} In this subsection, we let $\Lambda=\calO_{\undlamb}$. Using the above Proposition, we can calculate all the terms in \ref{E1-primitive} for the localized weight spectral sequence for
\begin{equation*}
\rmH^{3}_{\Iw,\frakm}(\Lambda(2))=\rmH^{3}(\overline{\rmY}\otimes\FF^{\ac}_{p}, \rmR\Psi(\Lambda)(2))_{\frakm^{[p]}_{\triplef}}
\end{equation*}
at a triple of maximal ideals $\frakm^{[p]}_{\triplef}=(\frakm^{[p]}_{1}, \frakm^{[p]}_{2}, \frakm^{[p]}_{3})$ and hence we can also describe the monodromy filtration
\begin{equation*}
\begin{aligned}
0&\subset^{\Gr^{\rmM}_{3}\rmH^{3}_{\Iw,\frakm}(\Lambda(2))}\rmM_{3}\rmH^{3}_{\Iw,\frakm}(\Lambda(2))\subset^{\Gr^{\rmM}_{2}\rmH^{3}_{\Iw,\frakm}(\Lambda(2))} \rmM_{2}\rmH^{3}_{\Iw,\frakm}(\Lambda(2))\subset^{\Gr^{\rmM}_{1}\rmH^{3}_{\Iw,\frakm}(\Lambda(2))} \rmM_{1}\rmH^{3}_{\Iw,\frakm}(\Lambda(2))\\&\subset^{\Gr^{\rmM}_{0}\rmH^{3}_{\Iw,\frakm}(\Lambda(2))} \rmM_{0}\rmH^{3}_{\Iw,\frakm}(\Lambda(2))\subset^{\Gr^{\rmM}_{-1}\rmH^{3}_{\Iw,\frakm}(\Lambda(2))}\rmM_{-1}\rmH^{3}_{\Iw,\frakm}(\Lambda(2))\subset^{\Gr^{\rmM}_{-2}\rmH^{3}_{\Iw,\frakm}(\Lambda(2))} \rmM_{-2}\rmH^{3}_{\Iw,\frakm}(\Lambda(2))\\
&\subset^{\Gr^{\rmM}_{-3}\rmH^{3}_{\Iw,\frakm}(\Lambda(2))} \rmM_{-3}\rmH^{3}_{\Iw,\frakm}(\Lambda(2))=\rmH^{3}_{\Iw,\frakm}(\Lambda(2)) \\
\end{aligned}
\end{equation*}
of $\rmH^{3}_{\Iw,\frakm}(\Lambda(2))$. For the moment, we only assume that each maximal ideal in $\frakm^{[p]}_{\triplef}$ is absolutely irreducible.

%\rmH^{3}(\overline{\rmX}^{3}_{d}(p)\otimes\FF^{\ac}_{p}, \rmR\Psi(\Lambda)(2))_{\frakm^{[p]}_{\triplef}}
\begin{proposition}\label{WS}
The localized weight spectral sequence for 
\begin{equation*}
\rmH^{3}_{\Iw,\frakm}(\Lambda(2))=\rmH^{3}(\overline{\rmY}\otimes\FF^{\ac}_{p}, \rmR\Psi(\Lambda)(2))_{\frakm^{[p]}_{\triplef}}
\end{equation*} 
converges at the $\rmE_{2}$-page. The graded pieces of the monodromy filtration $\rmM_{\bullet}\rmH^{3}_{\Iw,\frakm}(\Lambda(2))$ are given by
\begin{equation*}
\begin{aligned}
&\Gr^{\rmM}_{3}\rmH^{3}_{\Iw,\frakm}(\Lambda(2))=\rmH^{0}(\rmXbar_{[\pm\pm\pm]}\otimes\FF^{\ac}_{p}, \Lambda(2))_{\frakm^{[p]}_{\triplef}}\\
&\Gr^{\rmM}_{2}\rmH^{3}_{\Iw,\frakm}(\Lambda(2))=\underset{?\in\{\pm\}}\oplus\rmH^{1}(\rmXbar_{[?\pm\pm]}\otimes\FF^{\ac}_{p}, \Lambda(2))_{\frakm^{[p]}_{\triplef}}\oplus \underset{?\in\{\pm\}}\oplus\rmH^{1}(\rmXbar_{[\pm?\pm]}\otimes{\FF^{\ac}_{p}}, \Lambda(2))_{\frakm^{[p]}_{\triplef}}\oplus \\
&\phantom{aaaaaaaaaaaaa}\underset{?\in\{\pm\}}\oplus\rmH^{1}(\rmXbar_{[\pm\pm?]}\otimes{\FF^{\ac}_{p}}, \Lambda(2))_{\frakm^{[p]}_{\triplef}} \\
&\Gr^{\rmM}_{1}\rmH^{3}_{\Iw,\frakm}(\Lambda(2))=\underset{?_{1}, ?_{2}\in\{\pm\}}\oplus\rmH^{2}(\rmXbar_{[?_{1}?_{2}\pm]}\otimes{\FF^{\ac}_{p}}, \Lambda(2))_{\frakm^{[p]}_{\triplef}}\oplus \underset{?_{1}, ?_{2}\in\{\pm\}}\oplus\rmH^{2}(\rmXbar_{[?_{1}\pm?_{2}]}\otimes{\FF^{\ac}_{p}}, \Lambda(2))_{\frakm^{[p]}_{\triplef}}\oplus\\
&\phantom{aaaaaaaaaaaaa}\underset{?_{1}, ?_{2}\in\{\pm\}}\oplus\rmH^{2}(\rmXbar_{[\pm?_{1}?_{2}]}\otimes{\FF^{\ac}_{p}}, \Lambda(2))_{\frakm^{[p]}_{\triplef}}\oplus\rmH^{0}(\rmX_{[\pm\pm\pm]}\otimes\FF^{\ac}_{p}, \Lambda(1)))^{\oplus 3}_{\frakm^{[p]}_{\triplef}} \\
&\Gr^{\rmM}_{0}\rmH^{3}_{\Iw,\frakm}(\Lambda(2))=\underset{?_{1}, ?_{2}, ?_{3}\in\{\pm\}}\oplus\rmH^{3}(\rmXbar_{[?_{1}?_{2}?_{3}]}\otimes{\FF^{\ac}_{p}}, \Lambda(2))_{\frakm^{[p]}_{\triplef}}\oplus \underset{?\in\{\pm\}}\oplus\rmH^{1}(\rmXbar_{[?\pm\pm]}\otimes{\FF^{\ac}_{p}}, \Lambda(1))^{\oplus2}_{\frakm^{[p]}_{\triplef}}\oplus\\
&\phantom{aaaaaaaaaaaaa}\underset{?\in\{\pm\}}\oplus\rmH^{1}(\rmXbar_{[\pm?\pm]}\otimes{\FF^{\ac}_{p}}, \Lambda(1))^{\oplus2}_{\frakm^{[p]}_{\triplef}}\oplus\underset{?\in\{\pm\}}\oplus\rmH^{1}(\rmXbar_{[\pm\pm?]}\otimes{\FF^{\ac}_{p}}, \Lambda(1))^{\oplus2}_{\frakm^{[p]}_{\triplef}}\\
&\Gr^{\rmM}_{-1}\rmH^{3}_{\Iw,\frakm}(\Lambda(2))=\underset{?_{1}, ?_{2}\in\{\pm\}}\oplus\rmH^{2}(\rmXbar_{[?_{1}?_{2}\pm]}\otimes{\FF^{\ac}_{p}}, \Lambda(1))_{\frakm^{[p]}_{\triplef}}\oplus \underset{?_{1}, ?_{2}\in\{\pm\}}\oplus\rmH^{2}(\rmXbar_{[?_{1}\pm?_{2}]}\otimes{\FF^{\ac}_{p}}, \Lambda(1))_{\frakm^{[p]}_{\triplef}}\oplus\\
&\phantom{aaaaaaaaaaaaa}\underset{?_{1}, ?_{2}\in\{\pm\}}\oplus\rmH^{2}(\rmXbar_{[\pm?_{1}?_{2}]}\otimes{\FF^{\ac}_{p}}, \Lambda(1))_{\frakm^{[p]}_{\triplef}}\oplus\rmH^{0}(\rmX_{[\pm\pm\pm]}\otimes\FF^{\ac}_{p}, \Lambda)^{\oplus3}_{\frakm^{[p]}_{\triplef}} \\
&\Gr^{\rmM}_{-2}\rmH^{3}_{\Iw,\frakm}(\Lambda(2))=\underset{?\in\{\pm\}}\oplus\rmH^{1}(\rmXbar_{[?\pm\pm]}\otimes{\FF^{\ac}_{p}}, \Lambda)_{\frakm^{[p]}_{\triplef}}\oplus \underset{?\in\{\pm\}}\oplus\rmH^{1}(\rmXbar_{[\pm?\pm]}\otimes{\FF^{\ac}_{p}}, \Lambda)_{\frakm^{[p]}_{\triplef}}\oplus\\
&\phantom{aaaaaaaaaaaaaa}\underset{?\in\{\pm\}}\oplus\rmH^{1}(\rmXbar_{[\pm\pm?]}\otimes{\FF^{\ac}_{p}}, \Lambda)_{\frakm^{[p]}_{\triplef}} \\
&\Gr^{\rmM}_{-3}\rmH^{3}_{\Iw,\frakm}(\Lambda(2))=\rmH^{0}(\rmXbar_{[\pm\pm\pm]}\otimes{\FF^{\ac}_{p}}, \Lambda(-1))_{\frakm^{[p]}_{\triplef}}.\\
\end{aligned}
\end{equation*}
\end{proposition}
\begin{proof}
This follows from an explicit computation using the descriptions of $\rmYbar$ in Proposition \ref{Ybar} and \ref{E1-primitive}.
\end{proof}
\begin{remark}\label{extra-comp}
We note for later use that we can write the graded piece $\Gr^{\rmM}_{0}\rmH^{3}_{\Iw,\frakm}(\Lambda(2))$ more compactly as 
\begin{equation*}
\rmH^{3}(\rmYbar^{(0)}\otimes{\FF^{\ac}_{p}}, \Lambda(2))_{\frakm^{[p]}_{\triplef}}=\underset{?_{1}, ?_{2}, ?_{3}\in\{\pm\}}\oplus\rmH^{3}(\rmYbar_{[?_{1}?_{2}?_{3}]}\otimes{\FF^{\ac}_{p}}, \Lambda(2))_{\frakm^{[p]}_{\triplef}}.
\end{equation*}
In fact, we are identifying the cohomologies 
\begin{equation*}
\underset{?\in\{\pm\}}\oplus\rmH^{1}(\rmXbar_{[?\pm\pm]}\otimes{\FF^{\ac}_{p}}, \Lambda(1))^{\oplus2}_{\frakm^{[p]}_{\triplef}}\oplus\underset{?\in\{\pm\}}\oplus\rmH^{1}(\rmXbar_{[\pm?\pm]}\otimes{\FF^{\ac}_{p}}, \Lambda(1))^{\oplus2}_{\frakm^{[p]}_{\triplef}}\oplus\underset{?\in\{\pm\}}\oplus\rmH^{1}(\rmXbar_{[\pm\pm?]}\otimes{\FF^{\ac}_{p}}, \Lambda(1))^{\oplus2}_{\frakm^{[p]}_{\triplef}}
\end{equation*}
in $\Gr^{\rmM}_{0}\rmH^{3}_{\Iw,\frakm}(\Lambda(2))$ with the cohomologies
\begin{equation*}
\underset{?\in\{\pm\}}\oplus\rmH^{1}(\PP^{1}(\rmXbar_{[?\pm\pm]})\otimes{\FF^{\ac}_{p}}, \Lambda(1))^{\oplus2}_{\frakm^{[p]}_{\triplef}}\oplus\underset{?\in\{\pm\}}\oplus\rmH^{1}(\PP^{1}(\rmXbar_{[\pm?\pm]})\otimes{\FF^{\ac}_{p}}, \Lambda(1))^{\oplus2}_{\frakm^{[p]}_{\triplef}}\oplus\underset{?\in\{\pm\}}\oplus\rmH^{1}(\PP^{1}(\rmXbar_{[\pm\pm?]})\otimes{\FF^{\ac}_{p}}, \Lambda(1))^{\oplus2}_{\frakm^{[p]}_{\triplef}}
\end{equation*}
of the exceptional divisors in $\rmYbar^{(0)}$.
\end{remark}

Let $\rmH^{3}_{\Iw,\frakm}(\Lambda(2))^{\ur}=\rmH^{3}(\overline{\rmY}\otimes\FF^{\ac}_{p}, \rmR\Psi(\Lambda)(2))^{\ur}_{\frakm^{[p]}_{\triplef}}$ be the part of $\rmH^{3}_{\Iw,\frakm}(\Lambda(2))$ where the monodromy operator $N$ acts trivially.
\begin{corollary}\label{mono-fil-un}
The monodromy filtration $\rmM_{\bullet}\rmH^{3}_{\Iw,\frakm}(\Lambda(2))$ restricts to a filtration 
\begin{equation*}
\begin{aligned}
0&\subset^{\Gr^{\rmM}_{3}\rmH^{3}_{\Iw,\frakm}(\Lambda(2))^{\ur}}\rmM_{3}\rmH^{3}_{\Iw,\frakm}(\Lambda(2))^{\ur}\subset^{\Gr^{\rmM}_{2}\rmH^{3}_{\Iw,\frakm}(\Lambda(2))^{\ur}} \rmM_{2}\rmH^{3}_{\Iw,\frakm}(\Lambda(2))^{\ur}\subset^{\Gr^{\rmM}_{1}\rmH^{3}_{\Iw,\frakm}(\Lambda(2))^{\ur}} \rmM_{1}\rmH^{3}_{\Iw,\frakm}(\Lambda(2))^{\ur}\\&\subset^{\Gr^{\rmM}_{0}\rmH^{3}_{\Iw,\frakm}(\Lambda(2))^{\ur}} \rmM_{0}\rmH^{3}_{\Iw,\frakm}(\Lambda(2))^{\ur}=\rmH^{3}_{\Iw,\frakm}(\Lambda(2))^{\ur}\\
\end{aligned}
\end{equation*}
on $\rmH^{3}_{\Iw,\frakm}(\Lambda(2))^{\ur}$ whose graded pieces are given by
\begin{equation*}
\begin{aligned}
&\Gr^{\rmM}_{3}\rmH^{3}_{\Iw,\frakm}(\Lambda(2))^{\ur}=\rmH^{0}(\rmXbar_{[\pm\pm\pm]}\otimes\FF^{\ac}_{p}, \Lambda(2))_{\frakm^{[p]}_{\triplef}}\\
&\Gr^{\rmM}_{2}\rmH^{3}_{\Iw,\frakm}(\Lambda(2))^{\ur}=\underset{?\in\{\pm\}}\oplus\rmH^{1}(\rmXbar_{[?\pm\pm]}\otimes\FF^{\ac}_{p}, \Lambda(2))_{\frakm^{[p]}_{\triplef}}\oplus \underset{?\in\{\pm\}}\oplus\rmH^{1}(\rmXbar_{[\pm?\pm]}\otimes{\FF^{\ac}_{p}}, \Lambda(2))_{\frakm^{[p]}_{\triplef}}\oplus \\
&\phantom{aaaaaaaaaaaaaaaaa}\underset{?\in\{\pm\}}\oplus\rmH^{1}(\rmXbar_{[\pm\pm?]}\otimes{\FF^{\ac}_{p}}, \Lambda(2))_{\frakm^{[p]}_{\triplef}} \\
&\Gr^{\rmM}_{1}\rmH^{3}_{\Iw,\frakm}(\Lambda(2))^{\ur}=\underset{?_{1}, ?_{2}\in\{\pm\}}\oplus\rmH^{2}(\rmXbar_{[?_{1}?_{2}\pm]}\otimes{\FF^{\ac}_{p}}, \Lambda(2))_{\frakm^{[p]}_{\triplef}}\oplus \underset{?_{1}, ?_{2}\in\{\pm\}}\oplus\rmH^{2}(\rmXbar_{[?_{1}\pm?_{2}]}\otimes{\FF^{\ac}_{p}}, \Lambda(2))_{\frakm^{[p]}_{\triplef}}\oplus\\
&\phantom{aaaaaaaaaaaaaaaaa}\underset{?_{1}, ?_{2}\in\{\pm\}}\oplus\rmH^{2}(\rmXbar_{[\pm?_{1}?_{2}]}\otimes{\FF^{\ac}_{p}}, \Lambda(2))_{\frakm^{[p]}_{\triplef}}\oplus\rmH^{0}(\rmX_{[\pm\pm\pm]}\otimes\FF^{\ac}_{p}, \Lambda(1)))^{\oplus 2}_{\frakm^{[p]}_{\triplef}} \\
&\Gr^{\rmM}_{0}\rmH^{3}_{\Iw,\frakm}(\Lambda(2))^{\ur}=\underset{?_{1}, ?_{2}, ?_{3}\in\{\pm\}}\oplus\rmH^{3}(\rmXbar_{[?_{1}?_{2}?_{3}]}\otimes{\FF^{\ac}_{p}}, \Lambda(2))_{\frakm^{[p]}_{\triplef}}\oplus \underset{?\in\{\pm\}}\oplus\rmH^{1}(\rmXbar_{[?\pm\pm]}\otimes{\FF^{\ac}_{p}}, \Lambda(1))_{\frakm^{[p]}_{\triplef}}\oplus\\
&\phantom{aaaaaaaaaaaaaaaaaaa}\underset{?\in\{\pm\}}\oplus\rmH^{1}(\rmXbar_{[\pm?\pm]}\otimes{\FF^{\ac}_{p}}, \Lambda(1))_{\frakm^{[p]}_{\triplef}}\oplus\underset{?\in\{\pm\}}\oplus\rmH^{1}(\rmXbar_{[\pm\pm?]}\otimes{\FF^{\ac}_{p}}, \Lambda(1))_{\frakm^{[p]}_{\triplef}}.\\
\end{aligned}
\end{equation*}
\end{corollary}
\begin{proof}
This follows from the explicit descriptions of the monodromy operator on $\Gr^{\rmM}_{\bullet}\rmH^{3}_{\Iw,\frakm}(\Lambda(2))$ and Proposition \ref{WS}.
\end{proof}

\subsection{Proof of Theorem \ref{2-law}} Now we return to the proof of Theorem \ref{2-law}. We will consider the following composite map
\begin{equation*}
\begin{aligned}
\mathrm{CH}^{2}(\overline{\rmX}^{3}_{d})&\xrightarrow{{\AJ}_{\triplef, n}} \rmH^{1}(\FF_{p^{2}}, \rmM_{n}(\triplef, d)(-1))\xrightarrow{\mathbf{a}^{\ast}} \rmH^{1}(\FF_{p^{2}}, \rmM^{[p]}_{n}(\triplef, d)^{\bullet}(-1))\\
&\longrightarrow \rmH^{1}(\FF_{p^{2}}, \rmH^{3}(\overline{\rmX}^{3}_{d}(p)\otimes\FF^{\ac}_{p}, \rmR\Psi(\calO_{\undlamb})(2))_{/\frakp^{[p]}_{\triplef,n}})\cong \rmH^{1}(\FF_{p^{2}}, \rmH^{3}(\overline{\rmY}\otimes\FF^{\ac}_{p}, \rmR\Psi(\calO_{\undlamb})(2))_{/\frakp^{[p]}_{\triplef,n}}).\\
\end{aligned}
\end{equation*}

By Remark \ref{special-new}, $\rmM^{[p]}_{n}(\triplef, d)^{\bullet}(-1)$ can be identified with $\rmH^{3}(\overline{\rmX}^{3}_{d}(p)\otimes{\FF}^{\ac}_{p}, \calO_{\undlamb}(2))^{\bullet}_{/\frakp^{[p]}_{\triplef,n}}$.
It is clear that the specialization map $\rmH^{3}(\overline{\rmX}^{3}_{d}(p)\otimes{\FF}^{\ac}_{p}, \calO_{\undlamb}(2))\xrightarrow{sp} \rmH^{3}(\overline{\rmX}^{3}_{d}(p)\otimes\FF^{\ac}_{p}, \rmR\Psi(\calO_{\undlamb})(2))\cong \rmH^{1}(\FF_{p^{2}}, \rmH^{3}(\overline{\rmY}\otimes\FF^{\ac}_{p}, \rmR\Psi(\calO_{\undlamb})(2)))$ factors through
\begin{equation*}
\rmH^{3}(\overline{\rmY}\otimes\FF^{\ac}_{p}, \rmR\Psi(\calO_{\undlamb})(2))^{\ur}=\rmM_{0}\rmH^{3}(\overline{\rmY}\otimes\FF^{\ac}_{p}, \rmR\Psi(\calO_{\undlamb})(2))^{\ur}
\end{equation*}
and hence we obtain a map 
\begin{equation*}
\begin{aligned}
\mathrm{CH}^{2}(\overline{\rmX}^{3}_{d})&\xrightarrow{{\AJ}_{\triplef, n}} \rmH^{1}(\FF_{p^{2}}, \rmM_{n}(\triplef, d)(-1))\xrightarrow{\mathbf{a}^{\ast}} \rmH^{1}(\FF_{p^{2}}, \rmM^{[p]}_{n}(\triplef, d)^{\bullet}(-1))\\
&\longrightarrow \rmH^{1}(\FF_{p^{2}}, \rmM_{0}\rmH^{3}(\overline{\rmY}\otimes\FF^{\ac}_{p}, \rmR\Psi(\calO_{\undlamb})(2))^{\ur}_{/\frakp^{[p]}_{\triplef,n}})\\
&\longrightarrow \rmH^{1}(\FF_{p^{2}}, \Gr^{\rmM}_{0}\rmH^{3}(\overline{\rmY}\otimes\FF^{\ac}_{p}, \rmR\Psi(\calO_{\undlamb})(2))^{\ur}_{/\frakp^{[p]}_{\triplef,n}})\\
\end{aligned}
\end{equation*}
whose target can be identified with 
\begin{equation*}
\begin{aligned}
&\phantom{aa}\rmH^{1}(\FF_{p^{2}}, \underset{?_{1}, ?_{2}, ?_{3}\in\{\pm\}}\oplus\rmH^{3}(\rmXbar_{[?_{1}?_{2}?_{3}]}\otimes{\FF^{\ac}_{p}}, \calO_{\undlamb}(2))_{/\triplepnp})\\
&\oplus\rmH^{1}(\FF_{p^{2}}, \underset{?\in\{\pm\}}\oplus\rmH^{1}(\rmXbar_{[?\pm\pm]}\otimes{\FF^{\ac}_{p}}, \calO_{\undlamb}(1))^{\oplus2}_{/\triplepnp})\\
&\oplus \rmH^{1}(\FF_{p^{2}}, \underset{?\in\{\pm\}}\oplus\rmH^{1}(\rmXbar_{[\pm?\pm]}\otimes{\FF^{\ac}_{p}}, \calO_{\undlamb}(1))^{\oplus2}_{/\triplepnp})\\
&\oplus \rmH^{1}(\FF_{p^{2}}, \underset{?\in\{\pm\}}\oplus\rmH^{1}(\rmXbar_{[\pm\pm?]}\otimes{\FF^{\ac}_{p}}, \calO_{\undlamb}(1))^{\oplus2}_{/\triplepnp}).\\
\end{aligned}
\end{equation*}
Then we can further project it to its direct summand given by
\begin{equation*}
\begin{aligned}
&\phantom{aa}\rmH^{1}(\FF_{p^{2}}, \rmH^{1}(\rmXbar_{[+\pm\pm]}\otimes{\FF^{\ac}_{p}}, \calO_{\undlamb}(1))_{\triplepnp})\\
&\oplus \rmH^{1}(\FF_{p^{2}}, \rmH^{1}(\rmXbar_{[\pm+\pm]}\otimes{\FF^{\ac}_{p}}, \calO_{\undlamb}(1))_{\triplepnp})\\
&\oplus \rmH^{1}(\FF_{p^{2}}, \rmH^{1}(\rmXbar_{[\pm\pm+]}\otimes{\FF^{\ac}_{p}}, \calO_{\undlamb}(1))_{\triplepnp}).\\
\end{aligned}
\end{equation*} 

All in all, we obtain the following series of maps
\begin{equation*}
\begin{aligned}
\mathrm{CH}^{2}(\overline{\rmX}^{3}_{d})&\xrightarrow{{\AJ}_{\triplef, n}} \rmH^{1}(\FF_{p^{2}}, \rmM_{n}(\triplef, d)(-1))\xrightarrow{\mathbf{a}^{\ast}} \rmH^{1}(\FF_{p^{2}}, \rmM^{[p]}_{n}(\triplef, d)^{\bullet}(-1))\\
&\longrightarrow \rmH^{1}(\FF_{p^{2}}, \Gr^{\rmM}_{0}\rmH^{3}(\overline{\rmY}\otimes\FF^{\ac}_{p}, \rmR\Psi(\calO_{\undlamb})(2))^{\ur}_{/\triplepnp})\\
&\longrightarrow \rmH^{1}(\FF_{p^{2}}, \underset{?_{1}, ?_{2}, ?_{3}\in\{\pm\}}\oplus\rmH^{3}(\rmXbar_{[?_{1}?_{2}?_{3}]}\otimes{\FF^{\ac}_{p}}, \calO_{\lambda}(2))_{/\triplepnp})\\
&\phantom{aa}\oplus\rmH^{1}(\FF_{p^{2}}, \underset{?\in\{\pm\}}\oplus\rmH^{1}(\rmXbar_{[?\pm\pm]}\otimes{\FF^{\ac}_{p}}, \calO_{\undlamb}(1))^{\oplus2}_{/\triplepnp})\\
&\phantom{aa}\oplus \rmH^{1}(\FF_{p^{2}}, \underset{?\in\{\pm\}}\oplus\rmH^{1}(\rmXbar_{[\pm?\pm]}\otimes{\FF^{\ac}_{p}}, \calO_{\undlamb}(1))^{\oplus2}_{/\triplepnp})\\
&\phantom{aa}\oplus \rmH^{1}(\FF_{p^{2}}, \underset{?\in\{\pm\}}\oplus\rmH^{1}(\rmXbar_{[\pm\pm?]}\otimes{\FF^{\ac}_{p}}, \calO_{\undlamb}(1))^{\oplus2}_{/\triplepnp})\\
&\longrightarrow\rmH^{1}(\FF_{p^{2}}, \rmH^{1}(\rmXbar_{[+\pm\pm]}\otimes{\FF^{\ac}_{p}}, \calO_{\undlamb}(1))_{/\triplepnp})\\
&\phantom{aa}\oplus \rmH^{1}(\FF_{p^{2}}, \rmH^{1}(\rmXbar_{[\pm+\pm]}\otimes{\FF^{\ac}_{p}}, \calO_{\undlamb}(1))_{/\triplepnp})\\
&\phantom{aa}\oplus \rmH^{1}(\FF_{p^{2}}, \rmH^{1}(\rmXbar_{[\pm\pm+]}\otimes{\FF^{\ac}_{p}}, \calO_{\undlamb}(1))_{/\triplepnp}).\\
\end{aligned}
\end{equation*}
Let $\Xi_{n}$ be the composite of the above maps and let $\Xi_{?, n}$ be the composite of $\Xi_{n}$ with the projection to the component given by $\rmH^{1}(\FF_{p^{2}}, \rmH^{1}(\rmXbar_{?}\otimes{\FF^{\ac}_{p}}, \calO_{\undlamb})_{/\triplepnp})$ for $?\in\{[+\pm\pm], [\pm+\pm], [\pm\pm+]\}$.  We will analyze the image of the diagonal element $\overline{\Delta}_{d}$ under $\Xi_{?, n}$.  Recall the diagonal cycle 
\begin{equation*}
\Delta_{d}(\overline{B})=\vartheta_{\ast}[\rmZ_{d}(\overline{B})]\in \mathrm{CH}^{0}(\rmZ_{d}(\overline{B})^{3})
\end{equation*}
on the triple product of Shimura set $\rmZ_{d}(\overline{B})^{3}$ defined in \ref{def-diagonal}. Note the element $\Delta_{d}(\overline{B})$ can be also regarded as an element in the Chow group $\mathrm{CH}^{1}(\rmXbar_{?})$ for $?\in\{[+\pm\pm], [\pm+\pm], [\pm\pm+]\}$. Thus it gives rise to a cohomology class
\begin{equation*}
\vartheta_{?, n}\in  \rmH^{1}(\FF_{p^{2}}, \rmH^{1}(\rmXbar_{?}\otimes{\FF^{\ac}_{p}}, \calO_{\undlamb})_{/\triplepnp})
\end{equation*}
via the mod $\undlamb^{n}$ Abel--Jacobi map 
\begin{equation*}
\mathrm{CH}^{1}(\rmXbar_{?})\rightarrow\rmH^{1}(\FF_{p^{2}}, \rmH^{1}(\rmXbar_{?}\otimes{\FF^{\ac}_{p}}, \calO_{\undlamb})_{/\triplepnp}) 
\end{equation*}
constructed in the same way as \ref{AJ-n}.
\begin{proposition}\label{ss-diagonal}
The image of the diagonal element  $\overline{\Delta}_{d}$ under $\Xi_{?, n}$ is given by the class
\begin{equation*}
\vartheta_{?, n}\in  \rmH^{1}(\FF_{p^{2}}, \rmH^{1}(\rmXbar_{?}\otimes{\FF^{\ac}_{p}}, \calO_{\undlamb})_{/\triplepnp})
\end{equation*}
for $?\in \{[+\pm\pm], [\pm+\pm], [\pm\pm+]\}$.
\end{proposition}
\begin{proof}
Recall the class $\overline{\Theta}_{n}(\triplef, d)\in \rmH^{1}(\FF_{p^{2}}, \rmM_{n}(\triplef, d)(-1))$ as in \ref{modtheta}. Then by unwinding the definitions, see in particular \ref{1/2},  we find that
\begin{equation*}
\begin{aligned}
2^{3}\mathbf{a}^{\ast}(\overline{\Theta}_{n}(\triplef, d))&=\pi^{\ast}_{[111]}(\overline{\Theta}_{n}(\triplef, d))-\epsilon_{1}\pi^{\ast}_{[211]}(\overline{\Theta}_{n}(\triplef, d))-\epsilon_{2}\pi^{\ast}_{[121]}(\overline{\Theta}_{n}(\triplef, d))-\epsilon_{3}\pi^{\ast}_{[112]}(\overline{\Theta}_{n}(\triplef, d))\\
&\phantom{aa}+\epsilon_{2}\epsilon_{3}\pi^{\ast}_{[122]}(\overline{\Theta}_{n}(\triplef, d))+\epsilon_{1}\epsilon_{3}\pi^{\ast}_{[212]}(\overline{\Theta}_{n}(\triplef, d))+\epsilon_{1}\epsilon_{2}\pi^{\ast}_{[221]}(\overline{\Theta}_{n}(\triplef, d))\\
&\phantom{aa}-\epsilon_{1}\epsilon_{2}\epsilon_{3}\pi^{\ast}_{[222]}(\overline{\Theta}_{n}(\triplef, d))
\end{aligned}
\end{equation*}
where $\pi^{\ast}_{[?_{1}?_{2}?_{3}]}=(\pi^{\ast}_{?_{1},p}, \pi^{\ast}_{?_{2},p}, \pi^{\ast}_{?_{3},p})$ for $?_{1}, ?_{2}, ?_{3}\in\{1, 2\}$. 

This leads to understand the cycles $\pi^{\ast}_{[?_{1}?_{2}?_{3}]}(\overline{\Delta}_{d})$ in $\rmYbar^{(0)}$. In terms of their moduli interpretations in the notations of \S 3.1, these cycles are given by
\begin{equation*}
\begin{aligned}
&\pi^{\ast}_{[111]}(\overline{\Delta}_{d})=\{(\underline{A}_{1}, \underline{A}_{2}, \underline{A}_{3}, C_{1}\subset A_{1}[p], C_{2}\subset A_{2}[p], C_{3}\subset A_{3}[p]): \underline{A}_{1}\cong\underline{A}_{2}\cong\underline{A}_{3}\}\\
&\pi^{\ast}_{[112]}(\overline{\Delta}_{d})=\{(\underline{A}_{1}, \underline{A}_{2}, \underline{A}_{3}, C_{1}\subset A_{1}[p], C_{2}\subset A_{2}[p], C_{3}\subset A_{3}[p]): \underline{A}_{1}\cong \underline{A}_{2}\cong\underline{A}, \underline{A}_{3}/C_{3}\cong\underline{A}\}\\
&\pi^{\ast}_{[121]}(\overline{\Delta}_{d})=\{(\underline{A}_{1}, \underline{A}_{2}, \underline{A}_{3}, C_{1}\subset A_{1}[p], C_{2}\subset A_{2}[p], C_{3}\subset A_{3}[p]): \underline{A}_{1}\cong \underline{A}_{3}\cong\underline{A}, \underline{A}_{2}/C_{2}\cong\underline{A}\}\\
&\pi^{\ast}_{[112]}(\overline{\Delta}_{d})=\{(\underline{A}_{1}, \underline{A}_{2}, \underline{A}_{3}, C_{1}\subset A_{1}[p], C_{2}\subset A_{2}[p], C_{3}\subset A_{3}[p]): \underline{A}_{2}\cong \underline{A}_{3}\cong\underline{A}, \underline{A}_{1}/C_{1}\cong\underline{A}\}\\
&\pi^{\ast}_{[122]}(\overline{\Delta}_{d})=\{(\underline{A}_{1}, \underline{A}_{2}, \underline{A}_{3}, C_{1}\subset A_{1}[p], C_{2}\subset A_{2}[p], C_{3}\subset A_{3}[p]): \underline{A}_{2}/C_{2}\cong \underline{A}_{1}, \underline{A}_{3}/C_{3}\cong\underline{A}_{1}\}\\
&\pi^{\ast}_{[212]}(\overline{\Delta}_{d})=\{(\underline{A}_{1}, \underline{A}_{2}, \underline{A}_{3}, C_{1}\subset A_{1}[p], C_{2}\subset A_{2}[p], C_{3}\subset A_{3}[p]): \underline{A}_{1}/C_{1}\cong \underline{A}_{2}, \underline{A}_{3}/C_{3}\cong\underline{A}_{2}\}\\
&\pi^{\ast}_{[221]}(\overline{\Delta}_{d})=\{(\underline{A}_{1}, \underline{A}_{2}, \underline{A}_{3}, C_{1}\subset A_{1}[p], C_{2}\subset A_{2}[p], C_{3}\subset A_{3}[p]): \underline{A}_{1}/C_{1}\cong \underline{A}_{3}, \underline{A}_{2}/C_{2}\cong\underline{A}_{3}\}\\
&\pi^{\ast}_{[222]}(\overline{\Delta}_{d})=\{(\underline{A}_{1}, \underline{A}_{2}, \underline{A}_{3}, C_{1}\subset A_{1}[p], C_{2}\subset A_{2}[p], C_{3}\subset A_{3}[p]): \underline{A}_{1}/C_{1}\cong \underline{A}_{2}/C_{2}\cong\underline{A}_{3}/C_{3}\cong \underline{A}\}\\
\end{aligned}
\end{equation*}
where $\underline{A}_{i}$ is given by a tuple $(A_{i},  \iota_{i},  C_{N^{+}}, \alpha_{d})$ as in Definition \ref{Xm} and $C_{i}$ is a cyclic subgroup of order $p^{2}$ stable under the action of $\calO_{B}$ via $\iota_{i}$.

By definition, we need to understand the intersection of the cycle $\pi^{\ast}_{[?_{1}?_{2}?_{3}]}(\overline{\Delta}_{d})$ for $?_{1}, ?_{2}, ?_{3}\in\{1, 2\}$ with $\PP^{1}(\rmXbar_{?})$ for $?\in\{[+\pm\pm], [\pm+\pm], [\pm\pm+]\}$ in light of Remark \ref{extra-comp}. It is clear that for any point 
\begin{equation*}
(\underline{A}_{1}, \underline{A}_{2}, \underline{A}_{3}, C_{1}\subset A_{1}[p], C_{2}\subset A_{2}[p], C_{3}\subset A_{3}[p])\in \pi^{\ast}_{[?_{1}?_{2}?_{3}]}(\overline{\Delta}_{d})\cap \PP^{1}(\rmXbar_{[?]}), 
\end{equation*}
$A_{1}, A_{2}, A_{3} $ has to be supersingular and therefore $C_{1}, C_{2}, C_{3}$ are uniquely determined. For example, if this point lies in $\pi^{\ast}_{[112]}(\overline{\Delta}_{d})\cap \PP^{1}(\rmXbar_{?})$, then $A$ (and therefore ${A}_{1}$ and ${A}_{2}$) and ${A}_{3}$ are supersingular and $C_{1}\cong C_{2}$ is the kernel of Frobenius of $A$. Moreover ${A}_{3}\cong A^{(p)}$ and $C_{3}$ is the kernel of the map $A^{(p)}\rightarrow A$.

Then it is not difficult to see that
\begin{equation*}
\begin{aligned}
&\pi^{\ast}_{[111]}(\overline{\Delta}_{d})\cap \PP^{1}(\rmXbar_{[?]})=\Delta_{d}(\overline{B})\\
&\pi^{\ast}_{[112]}(\overline{\Delta}_{d})\cap \PP^{1}(\rmXbar_{[?]})=(1,1, w_{p})\Delta_{d}(\overline{B})\\
&\pi^{\ast}_{[121]}(\overline{\Delta}_{d})\cap\PP^{1}(\rmXbar_{[?]})=(1, w_{p}, 1)\Delta_{d}(\overline{B})\\
&\pi^{\ast}_{[112]}(\overline{\Delta}_{d})\cap \PP^{1}(\rmXbar_{[?]})=(w_{p}, 1, 1)\Delta_{d}(\overline{B})\\
&\pi^{\ast}_{[122]}(\overline{\Delta}_{d})\cap \PP^{1}(\rmXbar_{[?]})=(1, w_{p}, w_{p})\Delta_{d}(\overline{B})\\
&\pi^{\ast}_{[212]}(\overline{\Delta}_{d})\cap \PP^{1}(\rmXbar_{[?]})=(w_{p}, 1, w_{p})\Delta_{d}(\overline{B})\\
&\pi^{\ast}_{[221]}(\overline{\Delta}_{d})\cap \PP^{1}(\rmXbar_{[?]})=(w_{p},  w_{p}, 1)\Delta_{d}(\overline{B})\\
&\pi^{\ast}_{[222]}(\overline{\Delta}_{d})\cap \PP^{1}(\rmXbar_{[?]})= (w_{p},  w_{p}, w_{p})\Delta_{d}(\overline{B})\\
\end{aligned}
\end{equation*}
where $w_{p}$ is the Atkin-Lehner operator. By an easy adaption of the proof of \cite[Proposition 3.8]{Ri-100} in our setting, $w_{p}$ acts by $-\rmU_{p}$ on $\Gamma(\rmZ_{d}(\overline{B}), \calO_{\lambda_{i}})/\frakp^{[p]}_{i,n}$ which in turn agrees with $-\epsilon_{i}$ for $i\in\{1, 2, 3\}$. Hence $\Xi_{?, n}(\overline{\Delta}_{d})$ is given by the cycle class of 
\begin{equation*}
\begin{aligned}
\Delta_{d}(\overline{B})=&2^{-3}(\Delta_{d}(\overline{B})+\epsilon^{2}_{1}\Delta_{d}(\overline{B})+\epsilon^{2}_{2}\Delta_{d}(\overline{B})+\epsilon^{2}_{3}\Delta_{d}(\overline{B})\\
&+\epsilon^{2}_{2}\epsilon^{2}_{3}\Delta_{d}(\overline{B})+\epsilon^{2}_{1}\epsilon^{2}_{3}\Delta_{d}(\overline{B})+\epsilon^{2}_{1}\epsilon^{2}_{2}\Delta_{d}(\overline{B})\\
&+\epsilon^{2}_{1}\epsilon^{2}_{2}\epsilon^{2}_{3}\Delta_{d}(\overline{B})).
\end{aligned}
\end{equation*}
which is nothing but $\vartheta_{?, n}$.
\end{proof}

\begin{myproof}{Theorem \ref{2-law}}{}
Now we can finish the proof of Theorem \ref{2-law}. We note that we have the following commutative diagram by construction
\begin{equation}\label{key-diag}
\begin{tikzcd}
\rmH^{1}(\FF_{p^{2}}, \rmM_{n}(\triplef, d)(-1)) \arrow[r] \arrow[d] & \rmH^{1}(\FF_{p^{2}}, \rmM^{[p]}_{n}(\triplef, d)^{\bullet}(-1))  \arrow[r] \arrow[d] & \rmH^{1}(\FF_{p^{2}}, \Gr^{\rmM}_{0}\rmM^{[p]}_{n}(\triplef,d)^{\ur}(-1)) \arrow[d] &             \\
\rmT^{+}_{1, n}\otimes\rmT^{-}_{2, n}\otimes \rmT^{-}_{3, n}(-1) \arrow[r, "\Psi_{n}\otimes1\otimes1"]           & \rmT^{\bullet+}_{1, n}\otimes\rmT^{\bullet-}_{2, n}\otimes \rmT^{\bullet-}_{3, n}(-1)  \arrow[r, "1\otimes\Phi^{\ast}_{n}\otimes1" ]                   & \rmH^{1}(\FF_{p^{2}}, \rmH^{1}(\rmXbar_{[\pm\pm+]}\otimes{\FF^{\ac}_{p}}, \calO_{\lambda}(1))_{/\frakp_{\triplef^{[p]}, n}})\arrow[d, "1\otimes1\otimes\Phi^{\ast}_{n}"] \\
& & \threetensor\Gamma(\rmZ_{d}(\overline{B}), \calO_{\lambda_{i}})_{/\frakp^{[p]}_{i, n}}.\\
\end{tikzcd}
\end{equation}
For the second vertical map $\rmH^{1}(\FF_{p^{2}}, \rmM^{[p]}_{n}(\triplef, d)^{\bullet}(-1))\rightarrow \rmT^{\bullet+}_{1, n}\otimes\rmT^{\bullet-}_{2, n}\otimes \rmT^{\bullet-}_{3, n}(-1)$, we identify the target with
\begin{equation*}
\rmH^{0}(\rmXbar^{\ss}\otimes\FF^{\ac}_{p},\calO_{\lambda})^{\rmG_{\FF_{p^{2}}}}_{/\frakp^{[p]}_{1, n}}\otimes\rmH^{1}(\FF_{p^{2}}, \rmH^{1}(\rmXbar_{d}\otimes{\FF^{\ac}_{p}}, \calO_{\lambda}(1))_{/\frakp^{[p]}_{2, n}})\otimes\rmH^{1}(\FF_{p^{2}}, \rmH^{1}(\rmXbar_{d}\otimes{\FF^{\ac}_{p}}, \calO_{\lambda}(1))_{/\frakp^{[p]}_{3, n}}). \\
\end{equation*}
For the third vertical map  $\rmH^{1}(\FF_{p^{2}}, \Gr^{\rmM}_{0}\rmM^{[p]}_{n}(\triplef,d)^{\ur}(-1))\rightarrow  \rmH^{1}(\FF_{p^{2}}, \rmH^{1}(\rmXbar_{[\pm\pm+]}\otimes{\FF^{\ac}_{p}}, \calO_{\lambda}(1))_{/\frakp_{\triplef^{[p]}, n}})$, we identify the target with
\begin{equation*}
 \rmH^{0}(\rmXbar^{\ss}\otimes\FF^{\ac}_{p},\calO_{\lambda})^{\rmG_{\FF_{p^{2}}}}_{/\frakp^{[p]}_{1, n}}\otimes\rmH^{0}(\rmXbar^{\ss}\otimes\FF^{\ac}_{p},\calO_{\lambda})^{\rmG_{\FF_{p^{2}}}}_{/\frakp^{[p]}_{2, n}}\otimes\rmH^{1}(\FF_{p^{2}}, \rmH^{1}(\rmXbar_{d}\otimes{\FF^{\ac}_{p}}, \calO_{\lambda}(1))_{/\frakp^{[p]}_{3, n}})
\end{equation*}
where, in the triple tensor product on the right-hand-side, the first factor $\rmH^{0}(\rmXbar^{\ss}\otimes\FF^{\ac}_{p},\calO_{\lambda})^{\rmG_{\FF_{p^{2}}}}_{/\frakp^{[p]}_{1, n}}$ should be identified with $\rmT^{\bullet+}_{1, n}$ and the second factor $\rmH^{0}(\rmXbar^{\ss}\otimes\FF^{\ac}_{p},\calO_{\lambda})^{\rmG_{\FF_{p^{2}}}}_{/\frakp^{[p]}_{2, n}}$ should be identified with the space of vanishing cycles $\rmH^{1}(\overline{\rmX}_{2}(p)\otimes{\FF^{\ac}_{p}}, \rmR\Phi(\calO_{\lambda_{2}})(1))_{/\frakp^{[p]}_{2, n}}$ on $\overline{\rmX}_{2}(p)$.
The commutativity of the second square is the same as the commutativity of the following diagram
\begin{equation}
\begin{tikzcd}
\rmH^{1}(\FF_{p^{2}}, \rmM^{[p]}_{n}(\triplef, d)^{\bullet}(-1))\arrow[r]\arrow[d]&\rmH^{1}(\FF_{p^{2}}, \Gr^{\rmM}_{0}\rmM^{[p]}_{n}(\triplef,d)^{\ur}(-1)) \arrow[d] \\
\rmT^{\bullet+}_{1, n}\otimes\rmT^{\bullet-}_{2, n}\otimes \rmT^{\bullet-}_{3, n}(-1)   & \rmH^{1}(\FF_{p^{2}}, \rmH^{1}(\rmXbar_{[\pm\pm+]}\otimes{\FF^{\ac}_{p}}, \calO_{\lambda}(1))_{/\frakp^{[p]}_{\triplef, n}}) \arrow[l, "1\otimes\Phi_{n}\otimes1" ] \\
\end{tikzcd}
\end{equation}
which is given by the construction of the exact sequence \ref{fil-T-new} in Proposition \ref{new-old-part}.

If we follow the element 
\begin{equation*}
\overline{\Theta}_{n}(\triplef, d)\in\rmH^{1}(\FF_{p^{2}}, \rmM_{n}(\triplef, d)(-1)) 
\end{equation*}
to $\threetensor\Gamma(\rmZ_{d}(\overline{B}), \calO_{\lambda_{i}})_{/\frakp^{[p]}_{i, n}}$ through the second row of diagram \ref{key-diag}, we obtain  the element $\overline{\Theta}^{(1)}_{n}(\triplef, d)$ defined in \ref{j-theta} since the composite is exactly $\Psi_{n}\otimes\Phi^{\ast}_{n}\otimes\Phi^{\ast}_{n}$. On the other hand, if we follow the element $\overline{\Theta}_{n}(\triplef, d)\in\rmH^{1}(\FF_{p^{2}}, \rmM_{n}(\triplef, d)(-1))$ to $\threetensor\Gamma(\rmZ_{d}(\overline{B}), \calO_{\lambda_{i}})_{/\frakp^{[p]}_{i, n}}$ through the first row of this diagram, then we obtain the element  
\begin{equation*}
\mathbf{1}_{\overline{B}}\in \Gamma(\rmZ_{d}(\overline{B})^{3}, \calO_{\lambda})_{/\frakp^{[p]}_{\triplef,n}}\cong\threetensor\Gamma(\rmZ_{d}(\overline{B}), \calO_{\lambda_{i}})_{/\frakp^{[p]}_{i, n}} 
\end{equation*}
by Proposition \ref{ss-diagonal} where we recall $\mathbf{1}_{\overline{B}}$ is the characteristic function of the diagonal cycle $\Delta_{d}(\overline{B})=\vartheta_{\ast}\rmZ_{d}(\overline{B})$ in $\rmZ_{d}(\overline{B})^{3}$. Therefore we have
\begin{equation*}
\begin{aligned}
(\bar{\Theta}^{(1)}_{n}(\triplef, d), \phi_{1}\otimes\phi_{2}\otimes\phi_{3})&=(\mathbf{1}_{\overline{B}}, \phi_{1}\otimes\phi_{2}\otimes\phi_{3})\\
&=\sum_{z\in \Delta_{d}(\overline{B})} \phi_{1}(z)\otimes \phi_{2}(z)\otimes \phi_{3}(z).\\
\end{aligned}
\end{equation*} 
holds for any $\phi_{1}\otimes \phi_{2}\otimes \phi_{2}\in \otimes^{3}_{i=1}\Gamma(Z_{d}(\overline{B}),E_{\lambda_{i}}/ \calO_{\lambda_{i}})[\frakp^{[p]}_{i, n}]$ as desired.
\end{myproof}

\section{Ramified level raising on triple product of Shimura curves}
\subsection{Ramified arithmetic level raising for Shimura curves}
We consider the setting in $\S2.2$, let $(p,q)$ be a pair of $n$-admissible primes for the modular form $f$ and an integer $n\geq 1$. We attached to $f$ the Galois representation $\rho_{f,\lambda}$ and the Galois module $\rho_{\calO_{\lambda, n}}$ over $\calO_{\lambda, n}$. We consider the indefinite quaternion algebra $B^{\natural}$ of discriminant $N^{-}pq$. Then we can associate to it a Shimura curve over $\QQ$ denoted by $\rmX^{\natural}_{d}$ and its integral model $\interX^{\natural}_{d}$ over $\ZZ[1/Nd]$ as in \cite[3A]{Wang}. The curve $\interX^{\natural}_{d}$ admits the Cerednick--Drinfeld uniformization and its special fiber over $\FF_{q^{2}}$ can be explicitly described as in \cite[Proposition 2.2]{Wang}. In particular, the special fiber $\overline{\rmX}^{\natural}_{d}$ is a union $\PP^{1}(\rmZ^{+}_{d}(\overline{B}))\cup \PP^{1}(\rmZ^{-}_{d}(\overline{B}))$ with $\rmZ^{+}_{d}(\overline{B})\cong \rmZ^{-}_{d}(\overline{B})\cong\rmZ_{d}(\overline{B})$ and the intersection $\PP^{1}(\rmZ^{+}_{d}(\overline{B}))\cap \PP^{1}(\rmZ^{-}_{d}(\overline{B}))$ is the Shimura set $\rmZ_{d}(q)(\overline{B})$ obtained by adding an Iwahori level structure at $q$ to the Shimura set $\rmZ_{d}(\overline{B})$. Let $\overline{\pi}_{1,q}:\rmZ_{d}(q)(\overline{B})\rightarrow \rmZ_{d}(\overline{B})$ and $\overline{\pi}_{2,q}:\rmZ_{d}(q)(\overline{B})\rightarrow \rmZ_{d}(\overline{B})$ be the two natural degneracy maps at $q$.
We prove some parallel results in this setting to Proposition \ref{canonical-decomposition}, Proposition \ref{new-old-part} and Proposition \ref{new-old-quotient}.
\begin{lemma}\label{unramified-drinfeld}
Under Assumption \ref{assump}, we have an isomorphism 
\begin{equation*}
\rmH^{1}(\rmX^{\natural}_{d}\otimes{\QQ^{\ac}}, \calO_{\lambda}(1))_{/\frakp^{[pq]}_{n}}\cong \rho^{\oplus 2}_{\calO_{\lambda, n}}.
\end{equation*}
\end{lemma}
\begin{proof}
This follows from the same argument as in \cite[Theorem 5.17]{BD} combined with the argument in Lemma \ref{multi-one}. 
\end{proof}

\begin{definition}
We define the {new part} $\Gamma(\rmZ_{d}(q)(\overline{B}), \calO_{\lambda})^{\bullet}_{/\frakp^{[pq]}_{n}}$ of the space $\Gamma(\rmZ_{d}(q)(\overline{B}), \calO_{\lambda})_{/\frakp^{[pq]}_{n}}$ at $q$ by
\begin{equation*}
\Gamma(\rmZ_{d}(q)(\overline{B}), \calO_{\lambda})^{\bullet}_{/\frakp^{[pq]}_{n}}=\ker[\Gamma(\rmZ_{d}(q)(\overline{B}), \calO_{\lambda})_{/\frakp^{[pq]}_{n}}\xrightarrow{(-\overline{\pi}_{1,q,\ast},\overline{\pi}_{2,q,\ast})}\Gamma(\rmZ_{d}(\overline{B}), \calO_{\lambda, n})^{\oplus2}_{/\frakp^{[pq]}_{n}}]
\end{equation*}
and the {new quotient} $\Gamma(\rmZ_{d}(q)(\overline{B}), \calO_{\lambda})_{\bullet/\frakp^{[pq]}_{n}}$ of the space $\Gamma(\rmZ_{d}(q)(\overline{B}), \calO_{\lambda})_{/\frakp^{[pq]}_{n}}$ at $q$ by
\begin{equation*}
\Gamma(\rmZ_{d}(q)(\overline{B}), \calO_{\lambda})_{\bullet/\frakp^{[pq]}_{n}}=\coker[\Gamma(\rmZ_{d}(\overline{B}), \calO_{\lambda})^{\oplus2}_{/\frakp^{[pq]}_{n}}\xrightarrow{\overline{\pi}^{\ast}_{1,q}-\overline{\pi}^{\ast}_{2,q}}\Gamma(\rmZ_{d}(q)(\overline{B}), \calO_{\lambda})_{/\frakp^{[pq]}_{n}}].
\end{equation*}
Let $\lambda: \Gamma(\rmZ_{d}(q)(\overline{B}), \calO_{\lambda})^{\bullet}_{/\frakp^{[pq]}_{n}}\rightarrow \Gamma(\rmZ_{d}(q)(\overline{B}), \calO_{\lambda})_{\bullet/\frakp^{[pq]}_{n}}$ be the natural quotient map.
\end{definition}
We remark that the sign on the degeneracy map is added to be compatible with the signs in the  Rapoport--Zink spectral sequence.

\begin{lemma}\label{new-old-def}
Under the Assumption \ref{assump}, we have the following isomorphisms of $\calO_{\lambda, n}$-modules
\begin{equation*}
\Gamma(\rmZ_{d}(q)(\overline{B}), \calO_{\lambda})^{\bullet}_{/\frakp^{[pq]}_{n}}\cong \Gamma(\rmZ_{d}(\overline{B}), \calO_{\lambda})_{/\frakp^{[p]}_{n}}\cong \Gamma(\rmZ_{d}(q)(\overline{B}), \calO_{\lambda})_{\bullet/\frakp^{[pq]}_{n}}.
\end{equation*}
\end{lemma}
\begin{proof}
Since the composite 
\begin{equation*}
\Gamma(\rmZ_{d}(\overline{B}), \calO_{\lambda})^{\oplus2}_{/\frakp^{[pq]}_{n}}\xrightarrow{\overline{\pi}^{\ast}_{1, q}-\overline{\pi}^{\ast}_{2,q}} \Gamma(\rmZ_{d}(q)(\overline{B}), \calO_{\lambda})_{/\frakp^{[pq]}_{n}}\xrightarrow{(-\overline{\pi}_{1, q, \ast}, \overline{\pi}^{\ast}_{2, q,\ast})}\Gamma(\rmZ_{d}(\overline{B}), \calO_{\lambda})^{\oplus2}_{/\frakp^{[pq]}_{n}}
\end{equation*}
is given by the matrix 
\begin{equation*}
\Delta_{n}=
\begin{pmatrix}
-(q+1) &   \rmT_{q}\\
\rmT_{q} & -(q+1)\\ 
\end{pmatrix},
\end{equation*}
the same argument as in Proposition \ref{new-old-part} and Proposition \ref{new-old-quotient} imply that $\overline{\pi}^{\ast}_{1, q}-\overline{\pi}^{\ast}_{2,q}$ induces an isomorphism
\begin{equation*}
\Gamma(\rmZ_{d}(\overline{B}), \calO_{\lambda})_{/\frakp^{[pq]}_{n}}\cong \Gamma(\rmZ_{d}(q)(\overline{B}), \calO_{\lambda})^{\bullet}_{/\frakp^{[pq]}_{n}}
\end{equation*}
and that $(-\overline{\pi}_{1, q, \ast}, \overline{\pi}_{2, q, \ast})$ induces an isomorphism
\begin{equation*}
\Gamma(\rmZ_{d}(q)(\overline{B}), \calO_{\lambda})_{\bullet/\frakp^{[pq]}_{n}}\cong \Gamma(\rmZ_{d}(\overline{B}), \calO_{\lambda})_{/\frakp^{[pq]}_{n}}.
\end{equation*}
\end{proof}

\begin{proposition}\label{split-2}
Let $(p,q)$ be a pair of $n$-admissible primes for $f$. We assume that Assumption \ref{assump} holds, then there is a split exact sequence of $\calO_{\lambda, n}[\rmG_{\QQ_{q^{2}}}]$-modules
\begin{equation*}
0\rightarrow \Gamma(\rmZ_{d}(\overline{B}), \calO_{\lambda}(1))_{/\frakp^{[p]}_{n}}\rightarrow \rmH^{1}(\rmX^{\natural}_{d}\otimes{\QQ^{\ac}_{q}}, \calO_{\lambda}(1))_{/\frakp^{[pq]}_{n}}\rightarrow \Gamma(\rmZ_{d}(\overline{B}), \calO_{\lambda})_{/\frakp^{[p]}_{n}}\rightarrow 0.
\end{equation*} 
\end{proposition}
\begin{proof}
We consider the localized weight spectral sequence for $\rmH^{1}(\rmX^{\natural}_{d}\otimes{\QQ^{\ac}_{q}}, \calO_{\lambda}(1))_{\frakm^{[pq]}}$ and its induced monodromy filtration:
\begin{equation}\label{mono}
\begin{aligned}
&0\subset^{\rmE^{1,0}_{2,\frakm^{[pq]}}} \rmM_{1}\rmH^{1}(\rmX^{\natural}_{d}\otimes{\QQ^{\ac}_{q}},\calO_{\lambda}(1))_{\frakm^{[pq]}}\subset^{\rmE^{0,1}_{2, \frakm^{[pq]}}} \rmM_{0}\rmH^{1}(\rmX^{\natural}_{d}\otimes{\QQ^{\ac}_{q}},\calO_{\lambda}(1))_{\frakm^{[pq]}}\\
&\subset^{\rmE^{-1,2}_{2, \frakm^{[pq]}}} \rmM_{-1}\rmH^{1}(\rmX^{\natural}_{d}\otimes{\QQ^{\ac}_{q}},\calO_{\lambda}(1))_{\frakm^{[pq]}}.\\
\end{aligned}
\end{equation}
By discussions in example in \ref{1-dim},  we have 
\begin{equation*}
\begin{aligned}
&\rmE^{1,0}_{2,\frakm^{[pq]}}= \coker[\Gamma(\rmZ_{d}(\overline{B}), \calO_{\lambda}(1))^{\oplus2}_{\frakm^{[pq]}_{n}}\rightarrow\Gamma(\rmZ_{d}(q)(\overline{B}), \calO_{\lambda}(1))_{\frakm^{[pq]}}]\\
&\rmE^{0,1}_{2, \frakm^{[pq]}}=0\\
&\rmE^{-1,2}_{2, \frakm^{[pq]}}=\ker[\Gamma(\rmZ_{d}(q)(\overline{B}), \calO_{\lambda})_{\frakm^{[pq]}}\rightarrow\Gamma(\rmZ_{d}(\overline{B}), \calO_{\lambda})^{\oplus2}_{\frakm^{[pq]}}].\\
\end{aligned}
\end{equation*}
Then we have an exact sequence 
\begin{equation*}
0\rightarrow\Gamma(\rmZ_{d}(q)(\overline{B}), \calO_{\lambda}(1))_{\bullet/\frakp^{[pq]}_{n}}\rightarrow\rmH^{1}(\rmX^{\natural}_{d}\otimes{\QQ^{\ac}_{q}}, \calO_{\lambda}(1))_{/\frakp^{[pq]}_{n}}\rightarrow\Gamma(\rmZ_{d}(q)(\overline{B}), \calO_{\lambda})^{\bullet}_{/\frakp^{[pq]}_{n}}\rightarrow 0
\end{equation*}
of $\calO_{\lambda, n}[\rmG_{\QQ_{q}}]$-modules which clearly splits as $\rmH^{1}(\rmX^{\natural}_{d}\otimes{\QQ^{\ac}_{q}}, \calO_{\lambda}(1))_{/\frakp^{[pq]}_{n}}$ is unramified by Lemma \ref{unramified-drinfeld}. And this exact sequence is isomorphic to 
\begin{equation*}
0\rightarrow \Gamma(\rmZ_{d}(\overline{B}), \calO_{\lambda}(1))_{/\frakp^{[p]}_{n}}\rightarrow \rmH^{1}(\rmX^{\natural}_{d}\otimes{\QQ^{\ac}_{q}}, \calO_{\lambda}(1))_{/\frakp^{[pq]}_{n}}\rightarrow \Gamma(\rmZ_{d}(\overline{B}), \calO_{\lambda})_{/\frakp^{[p]}_{n}}\rightarrow 0.
\end{equation*} 
by Lemma \ref{new-old-def}.
\end{proof}
\begin{remark}
Note that
\begin{equation*}
\begin{aligned}
\rmH^{1}_{\sin}(\QQ_{q^{2}}, \rmH^{1}(\rmX^{\natural}_{d}\otimes{\QQ^{\ac}_{q}}, \calO_{\lambda}(1))_{/\frakp^{[pq]}_{n}})&=\rmH^{1}(\rmI_{q^{2}}, \rmH^{1}(\rmX^{\natural}_{d}\otimes{\QQ^{\ac}_{q}}, \calO_{\lambda}(1))_{/\frakp^{[pq]}_{n}})^{\rmG_{\FF_{q^{2}}}}\\
&\cong \Hom(\ZZ_{\ell}(1), \rmH^{1}(\rmX^{\natural}_{d}\otimes{\QQ^{\ac}_{q}}, \calO_{\lambda}(1))_{/\frakp^{[pq]}_{n}})^{\rmG_{\FF_{q^{2}}}}\\
&\cong  \rmH^{1}(\rmX^{\natural}_{d}\otimes{\QQ^{\ac}_{q}}, \calO_{\lambda})^{\rmG_{\FF_{q^{2}}}}_{/\frakp^{[pq]}_{n}}\\
&\cong \coker[\Gamma(\rmZ_{d}(q)(\overline{B}), \calO_{\lambda})^{\bullet}_{/\frakp^{[p]}_{n}}\xrightarrow{\lambda}\Gamma(\rmZ_{d}(\overline{B}), \calO_{\lambda})_{\bullet/\frakp^{[p]}_{n}}]\\
\end{aligned}
\end{equation*}
since $\rmH^{1}(\rmX^{\natural}_{d}\otimes{\QQ^{\ac}_{q}}, \calO_{\lambda}(1))_{/\frakp^{[pq]}_{n}})$ is unramified as an $\calO_{\lambda, n}[\rmG_{\QQ_{q^{2}}}]$-module.
Let $\rmH^{1}(\rmX^{\natural}_{d}\otimes{\QQ^{\ac}_{q}}, \calO_{\lambda}(1))^{\ur}_{/\frakp^{[pq]}_{n}}$ be the part of $\rmH^{1}(\rmX^{\natural}_{d}\otimes{\QQ^{\ac}_{q}}, \calO_{\lambda}(1))_{/\frakp^{[pq]}_{n}}$ where the monodromy operator $N$ acts by zero. By definition, we have
\begin{equation*}
\begin{aligned}
\rmH^{1}(\FF_{q^{2}}, \rmH^{1}(\rmX^{\natural}_{d}\otimes{\QQ^{\ac}_{q}}, \calO_{\lambda}(1))^{\ur}_{/\frakp^{[pq]}_{n}})&\cong \rmH^{1}(\FF_{q^{2}}, \rmH^{1}(\rmX^{\natural}_{d}\otimes{\QQ^{\ac}_{q}}, \calO_{\lambda}(1))_{/\frakp^{[pq]}_{n}})\\
&\cong \ker[\Gamma(\rmZ_{d}(q)(\overline{B}), \calO_{\lambda})^{\bullet}_{/\frakp^{[p]}_{n}}\xrightarrow{\lambda}\Gamma(\rmZ_{d}(\overline{B}), \calO_{\lambda})_{\bullet/\frakp^{[p]}_{n}}]\\
\end{aligned}
\end{equation*}
where the first isomorphism follows from the fact that $\rmH^{1}(\rmX^{\natural}_{d}\otimes{\QQ^{\ac}_{q}}, \calO_{\lambda}(1))_{/\frakp^{[pq]}_{n}}$ is unramified at $q$. Consider the following commutative diagram
\begin{equation*}
\begin{tikzcd}
0 \arrow[r] & \ker(\Delta^{\natural}_{n}) \arrow[d] \arrow[r] & \Gamma(\rmZ_{d}(q)(\overline{B}), \calO_{\lambda})^{\bullet}_{/\frakp^{[pq]}_{n}} \arrow[d] \arrow[r, "\lambda"] & \Gamma(\rmZ_{d}(q)(\overline{B}), \calO_{\lambda})_{\bullet/\frakp^{[pq]}_{n}} \arrow[d]           &   \\
0 \arrow[r] & \Gamma(\rmZ_{d}(\overline{B}), \calO_{\lambda})^{\oplus2}_{/\frakp^{[pq]}_{n}} \arrow[r] \arrow[d, "\Delta^{\natural}_{n}"] & \Gamma(\rmZ_{d}(q)(\overline{B}), \calO_{\lambda})_{/\frakp^{[pq]}_{n}} \arrow[r] \arrow[d] & \Gamma(\rmZ_{d}(q)(\overline{B}), \calO_{\lambda})_{\bullet/\frakp^{[pq]}_{n}}\arrow[r] \arrow[d] & 0 \\
0 \arrow[r] & \Gamma(\rmZ_{d}(\overline{B}), \calO_{\lambda})^{\oplus2}_{/\frakp^{[pq]}_{n}} \arrow[r] \arrow[d] & \Gamma(\rmZ_{d}(\overline{B}), \calO_{\lambda})^{\oplus2}_{/\frakp^{[pq]}_{n}}  \arrow[r] \arrow[d] & 0                     &   \\
            & \coker(\Delta^{\natural}_{n}) \arrow[r]           & 0                     &                       &  
\end{tikzcd}
\end{equation*}
where $\Delta^{\natural}_{n}=\begin{pmatrix}-(q+1) &   \rmT_{q}\\\rmT_{q} & -(q+1)\\ \end{pmatrix}$.  Then it follows that 
\begin{equation*}\label{fil-quot-drinfeld}
\begin{aligned}
&\ker[\Gamma(\rmZ_{d}(q)(\overline{B}), \calO_{\lambda})^{\bullet}_{/\frakp^{[pq]}_{n}}\xrightarrow{\lambda}\Gamma(\rmZ_{d}(\overline{B}), \calO_{\lambda})_{\bullet/\frakp^{[pq]}_{n}}]\cong \ker(\Delta^{\natural}_{n})\cong \Gamma(\rmZ_{d}(\overline{B}), \calO_{\lambda})_{/\frakp^{[pq]}_{n}}\cong \Gamma(\rmZ_{d}(\overline{B}), \calO_{\lambda})_{/\frakp^{[p]}_{n}}\\
&\coker[\Gamma(\rmZ_{d}(q)(\overline{B}), \calO_{\lambda})^{\bullet}_{/\frakp^{[pq]}_{n}}\xrightarrow{\lambda}\Gamma(\rmZ_{d}(\overline{B}), \calO_{\lambda})_{\bullet/\frakp^{[pq]}_{n}}]\cong \coker(\Delta^{\natural}_{n})\cong \Gamma(\rmZ_{d}(\overline{B}), \calO_{\lambda})_{/\frakp^{[pq]}_{n}}\cong \Gamma(\rmZ_{d}(\overline{B}), \calO_{\lambda})_{/\frakp^{[p]}_{n}}.\\
\end{aligned}
\end{equation*}

By the $n$-admissibility of $q$, we have the following exact sequence
\begin{equation*}
0\rightarrow  \rmH^{1}(\rmX^{\natural}_{d}\otimes{\QQ^{\ac}_{q}}, \calO_{\lambda})^{\rmG_{\FF_{q^{2}}}}_{/\frakp^{[pq]}_{n}}(1)\rightarrow  \rmH^{1}(\rmX^{\natural}_{d}\otimes{\QQ^{\ac}_{q}}, \calO_{\lambda}(1))_{/\frakp^{[pq]}_{n}}\rightarrow \rmH^{1}(\FF_{q^{2}}, \rmH^{1}(\rmX^{\natural}_{d}\otimes{\QQ^{\ac}_{q}}, \calO_{\lambda}(1))_{/\frakp^{[pq]}_{n}})\rightarrow 0
\end{equation*}
which can be identified with
\begin{equation*}
0\rightarrow \Gamma(Z_{d}(\overline{B}), \calO_{\lambda}(1))_{/\frakp_{n}^{[p]}}\rightarrow \rmH^{1}(\rmX^{\natural}_{d}\otimes{\QQ^{\ac}_{q}}, \calO_{\lambda}(1))_{/\frakp^{[pq]}_{n}}\rightarrow \Gamma(Z_{d}(\overline{B}), \calO_{\lambda})_{/\frakp^{[p]}_{n}}\rightarrow 0
\end{equation*}
by the above discussions. By construction, this split exact sequence is the same exact sequence as in Proposition \ref{split-2}.
\end{remark}

\subsection{Ramified arithmetic level raising for triple product of Shimura curves}
Now we shift to the triple product setting. We consider a pair of $n$-admissible primes $(p, q)$ for $\triplef$.  In \cite[Theorem 2]{Wang}, we proved the ramified arithmetic level raising theorem for the triple product of Shimura curves $\rmX^{\natural 3}_{d}$ under slightly different assumptions, we now review this result and indicate necessary modifications to incorporate our assumptions. We define the $\calO_{\undlamb, n}[\rmG_{\QQ}]$-module  $\rmM^{[{pq}]}_{n}(\triplef)$ over $\calO_{\underline{\lambda}, n}$ by 
\begin{equation*}
\rmM^{[{pq}]}_{n}(\triplef, d)=\threetensor\rmH^{1}(\rmX^{\natural}_{d}\otimes{\QQ^{\ac}}, \calO_{\lambda_{i}}(1))_{/\frakp^{[pq]}_{i,n}}.
\end{equation*} 
There is an isomorphism $\rmH^{1}(\rmX^{\natural}_{d}\otimes{\QQ^{\ac}}, \calO_{\lambda_{i}}(1))_{/\frakp^{[pq]}_{i,n}}\cong \rho^{\oplus2}_{\calO_{\lambda_{i}, n}}$ by Lemma \ref{unramified-drinfeld}. Hence we have an isomorphism 
\begin{equation*}
\rmM^{[{pq}]}_{n}(\triplef, d)\cong \rmM_{n}(\triplef, d) 
\end{equation*}
as $\calO_{\underline{\lambda}, n}[\rmG_{\QQ}]$-modules. 
Recall we have defined 
\begin{equation*}
\rmZ_{n}(\triplef, d)= \threetensor\Gamma(\rmZ_{d}(\overline{B}),\calO_{\lambda_{i}})_{/\frakp^{[p]}_{i,n}} 
\end{equation*}
in \S\ref{unram-raise}. The results in the last subsection imply immediately the following proposition.

\begin{proposition}[Ramified level raising]\label{rami-level-raising}
Let $(p, q)$ be a pair of $n$-admissible primes for $\triplef$. Suppose each maximal ideal in $\frakm_{\triplef}$ satisfies Assumption \ref{assump}. 
\begin{enumerate}
\item Then $\rmM^{[pq]}_{n}(\triplef, d)$ is unramified at $q$ and we have a natural isomorphism
\begin{equation*}
 \rmM^{[pq]}_{n}(\triplef, d)\cong\rmZ_{n}(\triplef, d)\oplus \rmZ^{\oplus 3}_{n}(\triplef, d)(1) \oplus \rmZ^{\oplus 3}_{n}(\triplef, d)(2)\oplus \rmZ_{n}(\triplef, d)(3)
\end{equation*}
as $\calO_{\undlamb,n}[\rmG_{\FF_{q^{2}}}]$-modules. 

\item There is an isomorphism 
\begin{equation*}
\threesum(\threetensor\Gamma(Z_{d}(\overline{B}),\calO_{\lambda_{i}})_{/\frakp^{[p]}_{i, n}})\cong\rmH^{1}_{\sing}(\QQ_{q},  \rmM^{[pq]}_{n}(\triplef, d)(-1))
\end{equation*}
of $\calO_{\underline{\lambda}, n}$-modules.
\end{enumerate}
\end{proposition}
\begin{proof}
The first statement follows from immediately from Proposition \ref{split-2}.  For the second statement, we have
\begin{equation*}
\begin{aligned}
\rmH^{1}_{\sing}(\QQ_{q^{2}},  \rmM^{[pq]}_{n}(\triplef, d)(-1))&=\Hom(\ZZ_{\ell}(1),  \rmM^{[pq]}_{n}(\triplef, d)(-1)))^{\rmG_{\FF_{q^{2}}}}\\
&\cong \rmM^{[pq]}_{n}(\triplef, d)(-2)^{\rmG_{\FF_{q^{2}}}}\\
&\cong (\rmZ_{n}(\triplef,d)(-2)\oplus \rmZ^{\oplus 3}_{n}(\triplef, d)(-1) \oplus \rmZ^{\oplus 3}_{n}(\triplef, d)\oplus \rmZ_{n}(\triplef,d)(1))^{\rmG_{\FF_{q^{2}}}}\\
&\cong \rmZ^{\oplus 3}_{n}(\triplef,d).\\
\end{aligned}
\end{equation*}
Then one proceeds as in  \cite[Corollary 4.11]{Wang} to descend the result to $\QQ_{q}$. 
\end{proof}

Let $\Delta^{\natural}_{d}=[\theta^{\natural}_{\ast}\rmX^{\natural}_{d}]\in \mathrm{CH}^{2}( \rmX^{\natural3}_{d})$ be the diagonal cycle for the natural diagonal morphism $\theta^{\natural}: \rmX^{\natural}_{d}\rightarrow \rmX^{\natural3}_{d}$.
Consider the Abel--Jacobi map 
\begin{equation}\label{AJ-p-n}
\mathrm{AJ}^{[pq]}_{\triplef, n}: \mathrm{CH}^{2}(\rmX^{\natural}_{d})\rightarrow \rmH^{1}(\QQ,  \rmM^{[{pq}]}_{n}(\triplef, d)(-1))
\end{equation}
for $\rmM^{[{pq}]}_{n}(\triplef, d)(-1)$ defined similarly as in \eqref{AJ-n}. We define 
\begin{equation*}
\Theta^{[{pq}]}_{n}(\triplef, d)= \mathrm{AJ}^{[pq]}_{\triplef, n}(\Delta^{\natural}_{d})
\end{equation*}
as an element in $\rmH^{1}(\QQ,  \rmM^{[{pq}]}_{n}(\triplef, d)(-1))$. Consider the singular residue $\partial_{q}\Theta^{[pq]} _{n}(\triplef, d)\in  \rmH^{1}_{\sing}(\QQ_{q}, \rmM^{[pq]}_{n}(\triplef)(-1))$ of $\Theta^{[{pq}]}_{n}(\triplef, d)$ at $q$. Proposition \ref{rami-level-raising} allows us to view the element $\partial_{q}\Theta^{[pq]} _{n}(\triplef, d)$ as an element in the space $\threesum(\threetensor\Gamma(Z_{d}(\overline{B}),\calO_{\lambda_{i}})_{/\frakp^{[p]}_{i, n}})$. We denote by $\partial^{(j)}_{q}\Theta^{[pq]} _{n}(\triplef, d)$ the component of $\partial_{q}\Theta^{[pq]} _{n}(\triplef, d)$ in the $j$-th copy of 
 $\threesum(\threetensor\Gamma(Z_{d}(\overline{B}),\calO_{\lambda_{i}})_{/\frakp^{[p]}_{i, n}})$.  
 \begin{theorem}[The first reciprocity law]\label{1-law}
 Let $(p, q)$ be a pair of $n$-admissible primes for $\triplef$. Suppose each maximal ideal in $\frakm_{\triplef}$ satisfies Assumption \ref{assump}.  
Then the formula 
 \begin{equation*}
 (\partial^{(j)}_{q}\Theta^{[pq]} _{n}(\triplef, d), \phi_{1}\otimes \phi_{2}\otimes \phi_{3})=(q+1)^{3}\sum_{z\in \Delta_{d}(\overline{B})} \phi_{1}(z)\otimes \phi_{2}(z)\otimes \phi_{3}(z)
 \end{equation*}
 holds for any $\phi_{1}\otimes \phi_{2}\otimes \phi_{3}\in \threetensor\Gamma(\rmZ_{d}(\overline{B}),E_{\lambda_{i}}/\calO_{\lambda_{i}})[\frakp^{[p]}_{i, n}]$ and any $j\in\{1, 2, 3\}$. 
 \end{theorem}
 \begin{proof}
This follows from \cite[Theorem 4.12]{Wang} by using Proposition \ref{rami-level-raising} instead of \cite[Theorem 4.7, Corollary 4.11]{Wang}.  
 \end{proof}
 
 \begin{remark}
 Theorem \ref{1-law} and Theorem \ref{2-law} imply the following  relation
\begin{equation*}
\begin{aligned}
(\partial^{(j)}_{q}\Theta^{ [pq]} _{n}(\triplef,d), \phi_{1}\otimes \phi_{2}\otimes \phi_{3})
&=(q+1)^{3}(\loc^{(j)}_{p}(\Theta_{n}(\triplef, d)), \phi_{1}\otimes\phi_{2}\otimes\phi_{3})\\ 
&= (q+1)^{3}\mathbf{I}(\phi_{1}, \phi_{2}, \phi_{3})\\
\end{aligned}
\end{equation*}
where 
\begin{equation*}
\mathbf{I}(\phi_{1}, \phi_{2}, \phi_{3})=\sum_{z\in \Delta_{d}(\overline{B})} \phi_{1}(z)\otimes \phi_{2}(z)\otimes \phi_{3}(z) 
\end{equation*}
for any $\phi_{1}\otimes\phi_{2}\otimes \phi_{3}\in \threetensor\Gamma(\rmZ(\overline{B}),E_{\lambda_{i}}/\calO_{\lambda_{i}})[\frakp^{[p]}_{i, n}]$ is the Gross--Kudla period. This shows that 
\begin{equation*}
(\Theta_{n}(\triplef, d), \Theta^{[pq]}_{n}(\triplef, d)) 
\end{equation*}
along with $\mathbf{I}(\phi_{1}, \phi_{2}, \phi_{3})$ form a bipartite Euler system in a weaker sense: these classes do satisfy all the required reciprocity laws, however the Frobenius eigenvalues on $\rmM_{n}(\triplef, d)$ is not just $p$ and $1$ which is required by the definition in \cite{How} for a bipartite Euler system. This also means the finite part and singular part of the local Galois cohomology group of the triple product Galois representation at a level raising prime have rank greater than one.
 
 \end{remark}
 
\section{Applications to the Bloch--Kato conjectures}
\subsection{The rank $1$ case of the Bloch--Kato conjecture} 
We consider the triple tensor product Galois representation $\rho_{f_{1}, \lambda_{1}}\otimes\rho_{f_{2}, \lambda_{2}}\otimes\rho_{f_{3}, \lambda_{3}}$ over $E_{\undlamb}:=E_{\lambda_{1}}\otimes E_{\lambda_{2}}\otimes E_{\lambda_{3}}$ attached to $\triplef=(f_{1}, f_{2}, f_{3})$. Recall that
\begin{equation*}
\mathrm{V}(\triplef)=\rho_{f_{1}, \lambda_{1}}\otimes\rho_{f_{2}, \lambda_{2}}\otimes\rho_{f_{3}, \lambda_{3}}.
\end{equation*}
By \cite[Lemma 5.1]{Wang}, we have $\rmH^{1}(\QQ_{v}, \rmV(\triplef)(-1))=0$ for all $v\nmid \ell$. Therefore it makes sense to consider the following Selmer group.
\begin{definition}\label{BK-def}
The \emph{Bloch--Kato Selmer} group $\rmH^{1}_{f}(\QQ, \mathrm{V}(\triplef)(-1))$ of the representation  $\mathrm{V}(\triplef)(-1)$ is the subspace of $\rmH^{1}(\QQ, \mathrm{V}(\triplef)(-1))$ consisting of those classes $s$ such that $\loc_{\ell}(s)\in \rmH^{1}_{f}(\QQ_{\ell}, \mathrm{V}(\triplef)(-1))$ where 
\begin{equation*}
\rmH^{1}_{f}(\QQ_{\ell}, \mathrm{V}(\triplef)(-1))= \ker[\rmH^{1}_{f}(\QQ_{\ell}, \mathrm{V}(\triplef)(-1))\rightarrow \rmH^{1}_{f}(\QQ_{\ell}, \mathrm{V}(\triplef)\otimes \mathrm{B}_{\mathrm{cris}}(-1))].
\end{equation*}
\end{definition}
Let $(\pi_{1}, \pi_{2}, \pi_{3})$ be the triple of irreducible cuspidal automorphic representation of $\GL_{2}(\Adel)$ associated to the triple $\triplef=(f_{1}, f_{2},f_{3})$. We have the \emph{Garrett--Rankin triple  product $L$-function}
\begin{equation*}
L(f_{1}\otimes f_{2}\otimes f_{3}, s)=L(s-\frac{3}{2}, \pi_{1}\otimes\pi_{2}\otimes \pi_{3}, r)
\end{equation*}
where $r$ is the natural $8$-dimensional representation of the $L$-group of $\GL_{2}\times \GL_{2}\times \GL_{2}$ and $L(s-\frac{3}{2}, \pi_{1}\otimes\pi_{2}\otimes \pi_{3}, r)$ is the Langlands $L$-function for $r$. The parity of the order of vanishing of $L(f_{1}\otimes f_{2}\otimes f_{3}, s)$ at the central critical point $s={2}$ is controlled by the \emph{global root number} $\epsilon(\pi_{1}\otimes\pi_{2}\otimes\pi_{3}, r)\in\{\pm1\}$.  In this article we will focus on the case when $\epsilon(\pi_{1}\otimes\pi_{2}\otimes\pi_{3}, r)=-1$. Consider the following Abel--Jacobi map 
\begin{equation}\label{AJ-Q}
\mathrm{AJ}_{\triplef, E_{\undlamb}}: \mathrm{CH}^{2}(\rmX^{3}_{d})\rightarrow \rmH^{1}(\QQ,  \rmH^{3}(\rmX^{3}_{d}\otimes{\Qbar}, E_{\underline{\lambda}}(2))_{\frakm_{\triplef}}).
\end{equation} 
constructed in the same way as \ref{AJ}.
We denote by $\Theta(\triplef, d) \in \rmH^{1}(\QQ,  \rmV(\triplef)(-1))$ the image of $\Delta_{d}=\theta_{*}[X_{d}]$ under the composite of $\mathrm{AJ}_{\triplef, E_{\undlamb}}$ and the natural projection from $\rmH^{1}(\QQ,  \rmH^{3}(\rmX^{3}_{d}\otimes{\QQ^{\ac}}, E_{\underline{\lambda}}(2))_{\frakm_{\triplef}})$ to $\rmH^{1}(\QQ,  \rmV(\triplef)(-1))$.  Since $\interX^{3}_{d}$ has good reduction at $\ell$, $\Theta(\triplef, d)$ in fact lies in the Bloch--Kato Selmer group $\rmH^{1}_{f}(\QQ, \mathrm{V}(\triplef)(-1))$ by \cite[Theorem 3.1]{Nekovar}. 

In light of the Gross--Zagier formula for the Gross--Kudla--Schoen diagonal cycles \cite{YZZ-dia} and assuming the conjectural injectivity of the Abel--Jacobi map and the non-degeneracy of the height-pairing, there are infinitely many $l$ such that $\Theta(\triplef, d)$ is non-zero as long as the first derivative $L^{\prime}(f_{1}\otimes f_{2}\otimes f_{3}, 2)$ is non-vanishing. Therefore we can view $\Theta(\triplef, d)$ as an algebraic incarnation of the first derivative $L^{\prime}(f_{1}\otimes f_{2}\otimes f_{3}, 2)$ and it makes sense to formulate the following conjecture towards the rank one case of the Bloch--Kato conjecture for the triple product motive attached to $\triplef$.

\begin{conjecture}
Suppose the class $\Theta(\triplef, d)\in \rmH^{1}(\QQ,  \rmV(\triplef)(-1))$ is non-zero. Then the Bloch--Kato Selmer group $\rmH^{1}_{f}(\QQ_{\ell}, \mathrm{V}(\triplef)(-1))$ is of rank $1$ over ${E_{\undlamb}}$.
\end{conjecture}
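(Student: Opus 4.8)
The plan is to run a Kolyvagin-type descent driven by the bipartite Euler system packaged by the reciprocity laws of Theorems~\ref{1-law} and~\ref{2-law}, splitting the equality into the two bounds $\mathrm{rank}_{E_{\undlamb}}\rmH^{1}_{f}(\QQ,\rmV(\triplef)(-1))\geq 1$ and $\leq 1$. The lower bound is immediate from the discussion surrounding Definition~\ref{BK-def}: the local conditions at $v\nmid l$ are vacuous because $\rmH^{1}(\QQ_{v},\rmV(\triplef)(-1))=0$, and $\Theta_{\triplef}$ already lies in $\rmH^{1}_{f}(\QQ,\rmV(\triplef)(-1))$ since $\interX$ has good reduction at $l$ (see \cite{Nekovar}); so non-vanishing of $\Theta_{\triplef}$ forces $\mathrm{rank}\geq 1$, and the whole content is the opposite inequality.

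For the upper bound I would descend to the finite coefficients $\calO_{\undlamb,n}$ and prove that $\rmH^{1}_{f}(\QQ,\rmM_{n}(\triplef)(-1))$ is generated by the image of $\Theta_{n}$ up to a submodule killed by a power of $l$ bounded independently of $n$; the usual comparison of $\rmH^{1}_{f}(\QQ,\rmV(\triplef)(-1))$ with $\varprojlim_{n}\rmH^{1}_{f}(\QQ,\rmM_{n}(\triplef)(-1))$ then yields $\mathrm{rank}\leq 1$. The mechanism is the standard one: $\Theta_{n}$ (no auxiliary level) is everywhere unramified, hence Selmer, while for a pair $\underline{p}=(p,p')$ of $n$-admissible primes the class $\Theta^{[\underline{p}]}_{n}$ is Selmer away from $\{p,p'\}$ with singular residue at $p'$ made explicit by the first reciprocity law (Theorem~\ref{1-law}), and, by the symmetric role of $p$ and $p'$, also at $p$. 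Given a hypothetical $s\in\rmH^{1}_{f}(\QQ,\rmM_{n}(\triplef)(-1))$ independent of $\Theta_{n}$, a Chebotarev argument --- using the absolute irreducibility of Assumption~\ref{assump} together with a big-image hypothesis on the triple residual representation (a hypothesis stronger than the absolute irreducibility in Assumption~\ref{assump}) --- should produce an $n$-admissible prime $p$ at which $\loc_{p}$ separates $s$ from $\langle\Theta_{n}\rangle$; Corollary~\ref{level-raise-Q} identifies $\rmH^{1}_{\mathrm{fin}}(\QQ_{p},\rmM_{n}(\triplef)(-1))$ with spaces of quaternionic test vectors, the second reciprocity law (Theorem~\ref{2-law}) evaluates $\loc_{p}(\Theta_{n})$ as the Gross-Kudla period $I(\phi_{1},\phi_{2},\phi_{3})$, and feeding $(p,p')$ into Theorem~\ref{1-law} together with global reciprocity (vanishing of the sum of local invariants of the cup product $s\cup\Theta^{[\underline{p}]}_{n}$, i.e. the Poitou-Tate sequence for the self-dual module $\rmM_{n}(\triplef)(-1)$) turns a non-trivial $\loc_{p}(s)$ into a contradiction with the Selmer condition of $s$ at $p'$. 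This is exactly the $\lambda/\mu$ bookkeeping of \cite{How}, and in the degenerate symmetric cube case it runs verbatim, which is how Theorem~\ref{intro-rank-1} is obtained.

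The main obstacle --- and the reason the statement above remains conjectural --- is that for the full $8$-dimensional $\rmV(\triplef)$ the local cohomology groups $\rmH^{1}_{\mathrm{fin}}(\QQ_{p},\rmM_{n}(\triplef)(-1))$ and $\rmH^{1}_{\sing}(\QQ_{p'},\rmM^{[\underline{p}]}_{n}(\triplef)(-1))$ are free of rank $3$ (Lemma~\ref{fin-part} and Theorem~\ref{rami-level-raising}), not of rank $1$, so the axioms of \cite{How} are not satisfied literally. Two observations suggest a possible route: first, combining Theorems~\ref{1-law} and~\ref{2-law} shows the three components $\loc^{(1)}_{p}(\Theta_{n})$, $\loc^{(2)}_{p}(\Theta_{n})$, $\loc^{(3)}_{p}(\Theta_{n})$ all compute the \emph{same} period, so $\loc_{p}(\Theta_{n})$ in fact lies in the rank-$1$ diagonal $\{(x,x,x)\}$ of the rank-$3$ group; second, one can try to descend against the refined Selmer structure that forces every admissible local class into that diagonal. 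The hard part is then to show that this constrained Selmer group equals $\rmH^{1}_{f}(\QQ,\rmV(\triplef)(-1))$ --- equivalently, to rule out Selmer classes that are genuinely off-diagonal at infinitely many admissible primes --- and this seems to require either a higher-rank bipartite Euler system formalism extending \cite{How}, or a non-degeneracy input for the $3\times 3$ period pairing (a local multiplicity-one, Gross-Prasad type statement) that is not available in the stated generality. Failing that, the descent only bounds the image of the Selmer group in the diagonal, and one is forced either to project onto the symmetric cube component --- where the local groups genuinely have rank $1$ and \cite{How} applies directly, as in Theorem~\ref{intro-rank-1} --- or to leave the general triple product case open.
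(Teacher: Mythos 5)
You have not proved the statement, but neither does the paper: it is stated there as a conjecture, and the paper's own remark following it records exactly the obstruction you identify, namely that at an $n$-admissible prime the local groups $\rmH^{1}_{\mathrm{fin}}(\QQ_{p},\rmM_{n}(\triplef)(-1))$ and $\rmH^{1}_{\sing}(\QQ_{p'},\rmM^{[\underline{p}]}_{n}(\triplef)(-1))$ are free of rank $3$ (Lemma \ref{fin-part}, Theorem \ref{rami-level-raising}), so the bipartite formalism of \cite{How} does not apply literally. Your assessment therefore matches the paper's position precisely: the lower bound $\mathrm{rank}\geq 1$ is indeed immediate from $\Theta_{\triplef}\in\rmH^{1}_{f}(\QQ,\rmV(\triplef)(-1))$ (via \cite{Nekovar} and the vanishing of $\rmH^{1}(\QQ_{v},\rmV(\triplef)(-1))$ for $v\nmid l$), the upper bound is the open part, and the one situation where the descent does close is the degenerate case $\triplef=(f,f,f)$ after projecting to the symmetric cube component, where the local groups have rank $1$ and \cite{How} runs verbatim --- which is exactly what the paper proves as Theorem \ref{main-rank-1} (Theorem \ref{intro-rank-1}), using the reciprocity laws of Theorems \ref{1-law} and \ref{2-law} and Corollary \ref{level-raise-Q} in the way you describe.

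Two cautions on the speculative middle part of your proposal. First, the claim that $\loc_{p}(\Theta_{n})$ lies in a rank-one ``diagonal'' of the rank-$3$ group needs care: the three components are compared with $\otimes^{3}_{i=1}\Gamma(X^{D},\calO_{E_{\lambda_{i}}})_{/I^{[p]}_{i,n}}$ through three \emph{different} identifications ($\Phi_{1,n}\otimes\Phi_{2,n}\otimes\Phi^{\heartsuit}_{3,n}$, etc., in \eqref{comp-isom}), so the ``diagonal'' is only defined relative to these trivializations, and equality of the three pairings against the rank-one space of test vectors in Theorem \ref{2-law} pins down the components only if one also knows the relevant pairings are perfect. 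Second, and more fundamentally, even granting such a constraint on the cycle class, the reciprocity laws say nothing about the localization of an arbitrary Selmer class $s$, which can sit anywhere in the rank-$3$ local module; the Chebotarev/global-duality step then cannot be closed without either an extension of the bipartite axioms to higher-rank local conditions or the non-degeneracy input for the $3\times 3$ period pairing that you mention. That missing input is the genuine gap, and it is the reason the paper leaves the general triple product statement as a conjecture while proving only its symmetric cube avatar.
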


\subsection{Bipartite Euler system for the symmetric cube motive} In this subsection, we apply the results in this article to the case when the triple product motive degenerates. Let $\triplef=(f, f, f)$ for a single modular form $f\in S^{\mathrm{new}}_{2}(\Gamma_{0}(N))$ such that $N=N^{+}N^{-}$ with $(N^{+}, N^{-})=1$ and $N^{-}$ is square-free with even number of prime factors. We denote by $\pi$ the automorphic representation of $\GL_{2}(\mathbf{A})$ associated to $f$. Recall that we have the Galois representation $\rho_{f, \lambda}$ and its residual representation $\overline{\rho}_{f, \lambda}$ attached to $f$. Let $\rmV=\rmV_{f,\lambda}$ be the representation space of $\rho_{f, \lambda}$. The triple tensor product representation $\rho^{\otimes 3}_{f, \lambda}$ admits the following factorization
\begin{equation*}
\rho^{\otimes3}_{f, \lambda}(-1)=\mathrm{Sym}^{3}\rho_{f, \lambda}(-1)\oplus \rho_{f, \lambda}\oplus \rho_{f, \lambda}
\end{equation*}
by the Schur functor construction. In the notations of previous sections, this factorization is given by $\rmV({\triplef})(-1)=\mathrm{Sym}^{3}\rmV_{f, \lambda}(-1)\oplus \rmV_{f, \lambda}\oplus \rmV_{f, \lambda}$. The triple tensor product $L$-function $L(f\otimes f\otimes f, s)$ factors accordingly as 
\begin{equation*}
L(f\otimes f\otimes f, s)=L(\mathrm{Sym}^{3}f, s)L(f, s-1)^{2}.
\end{equation*}
In fact by a result of Kim and Shahadi \cite{KS}, the $L(\mathrm{Sym}^{3}f, s)$ is entire and  $L(f\otimes f\otimes f, s)$ is divisible by $L(f, s-1)$. In the case where the global root number $\epsilon(\pi\otimes\pi\otimes \pi, r)$ is $-1$ for $L(f\otimes f\otimes f, s)$, the $L$-function $L(\mathrm{Sym}^{3}f, s)$ has also global root number $-1$ at $s=2$. 

We project the class  $\Theta(\triplef, d) \in \rmH^{1}(\QQ,  \rmV(\triplef)(-1))$ to the symmetic cube component according to the factorization 
\begin{equation*}
\rmV({\triplef})(-1)=\mathrm{Sym}^{3}\rmV_{f, \lambda}(-1)\oplus \rmV_{f, \lambda}\oplus  \rmV_{f, \lambda}. 
\end{equation*}
The resulting class will be denoted by  $\Theta^{\diamond}(\triplef, d)$. The symmetric cube component $\mathrm{Sym}^{3}\rmV_{f, \lambda}(-1)$ of  $\rmV({\triplef})(-1)$ will be denoted by  $\rmV^{\diamond}({\triplef})(-1)$ and thus $\Theta^{\diamond}(\triplef, d)\in \rmH^{1}(\QQ, \rmV^{\diamond}({\triplef})(-1))$. The class  $\Theta^{\diamond}(\triplef, d)$ can be considered as an algebraic incarnation of the first derivative $L^{\prime}(\mathrm{Sym}^{3}f, s)$ at $s=2$.  We define the \emph{ symmetric cube Bloch--Kato Selmer} group $\rmH^{1}_{f}(\QQ, \mathrm{V}^{\diamond}(\triplef)(-1))$  the same way as in Definition \ref{BK-def}.  Using the reciprocity laws proved in this article, we will prove the following theorem towards the rank $1$ case of the Bloch--Kato conjecture for the symmetric cube motive of the modular form $f$. 
\begin{theorem}\label{main-rank-1}
Suppose that the modular form $f$ satisfies the following assumptions:
\begin{enumerate}
\item The residual Galois representation $\bar{\rho}_{f, \lambda}\vert_{G_{\QQ(\zeta_{\ell})}}$ is absolutely irreducible;
\item The residual Galois representation $\bar{\rho}_{f, \lambda}$ is minimally ramified at primes in $\Sigma^{+}\cup\Sigma^{-}_{\ram}$. Moreover $\bar{\rho}_{f, \lambda}$ is ramified at primes in $\Sigma^{-}_{\ram}$;
\item The image of $\overline{\rho}_{f, \lambda}$ contains $\GL_{2}(\FF_{l})$.
\end{enumerate}
If the class $\Theta^{\diamond}(\triplef, d) \in \rmH^{1}(\QQ, \rmV^{\diamond}({\triplef})(-1))$ is non-zero, then the symmetric cube Bloch--Kato Selmer group 
\begin{equation*}
\rmH^{1}_{f}(\QQ, \mathrm{V}^{\diamond}(\triplef)(-1))
\end{equation*}
is of dimension $1$ over $E_{\lambda}$.
\end{theorem}

To prove this theorem, we review a few results on local Tate dualities. Let $\rmT$ be the $\calO_{\lambda}$-lattice in $\rho_{f, \lambda}$ determined by the isomorphism $\rmH^{1}(\rmX_{d}\otimes{\QQ^{\ac}}, \calO_{\lambda}(1))_{\frakm}\cong \rmT^{\oplus 2}$ in Proposition \ref{multi-one}. Let $\rmA=\rmV/\rmT$ be the divisible $\rmG_{\QQ}$-module associated to $f$. Let $n\geq 1$ be an integer. We denote  the $\lambda^{n}$ torsion subgroup of $\rmA$ by $\rmA_{n}$ and set $\rmT_{n}=\rmT/\lambda^{n}$. We put $\rmN^{\diamond}(\triplef)(-1)=\mathrm{Sym}^{3}\mathrm{A}(-1)$ and $\rmN^{\diamond}_{n}(\triplef)(-1)=\Sym^{3}\rmA_{n}(-1)$. Similarly, we let  $\rmM^{\diamond}(\triplef)(-1)=\Sym^{3}\rmT(-1)$ and $\rmM^{\diamond}_{n}(\triplef)(-1)=\Sym^{3}\rmT_{n}(-1)$. Note that $\rmN^{\diamond}_{n}(\triplef)(-1)$ and $\rmM^{\diamond}_{n}(\triplef)(-1)$ are Kummer dual to each other and are in fact naturally isomorphic to each other. In order to simplify the notations, we will let $\calO=\calO_{\lambda}$ and $\calO_{n}=\calO_{\lambda, n}$.

\begin{definition}\label{n-adm}
Let $n\geq 1$ be an integer. We say that a prime $p$ is \emph{$(n, 1)$-admissible} for $f$ if 
\begin{enumerate}
\item $p\nmid N\ell$;
\item $\ell\nmid p^{2}-1 $;
\item $\varpi^{n}\mid p+1-\epsilon_{p}(f)a_{p}(f)$ with $\epsilon_{p}(f)=1$. 
\end{enumerate}
\end{definition}

The $\rmG_{\QQ}$-equivariant pairing $\mathrm{N}^{\diamond}_{n}(\triplef)(-1)\times \rmM^{\diamond}_{n}(\triplef)(-1)\rightarrow \calO_{n}(1)$ induces for each place $v$ of $\QQ$ a local Tate pairing 
\begin{equation*}
(\hphantom{a}, \hphantom{b})_{v}: \rmH^{1}(\QQ_{v}, \rmN^{\diamond}_{n}(\triplef)(-1))\times \rmH^{1}(\QQ_{v},\rmM^{\diamond}_{n}(\triplef)(-1)) \rightarrow \rmH^{1}(\QQ_{v}, \calO_{n}(1))\cong \calO_{n}.
\end{equation*}
We will write $(s, t)_{v}$ for $s\in  \rmH^{1}(\QQ, \rmN^{\diamond}_{n}(\triplef)(-1))$ and $t\in  \rmH^{1}(\QQ,\rmM^{\diamond}_{n}(\triplef)(-1))$ instead of $(\loc_{v}(s), \loc_{v}(t))_{v}$.  Let $\rmH^{1}_{f}(\QQ_{\ell},\rmM^{\diamond}(\triplef)(-1))$ be the pullback of  $\rmH^{1}_{f}(\QQ_{\ell}, \rmV^{\diamond}(\triplef)(-1))$ under the natural map 
\begin{equation*}
\rmH^{1}(\QQ_{\ell},\rmM^{\diamond}(\triplef)(-1))\rightarrow \rmH^{1}(\QQ_{\ell}, \rmV^{\diamond}(\triplef)(-1)). 
\end{equation*}
We define $\rmH^{1}_{f}(\QQ_{\ell},\rmM^{\diamond}_{n}(\triplef)(-1))$ to be the reduction of $\rmH^{1}(\QQ_{\ell},\rmM^{\diamond}(\triplef)(-1))$ modulo $\lambda^{n}$. Similarly, we let $\rmH^{1}_{f}(\QQ_{\ell}, \rmN^{\diamond}(\triplef)(-1))$ be the image of $\rmH^{1}_{f}(\QQ_{\ell}, \rmV^{\diamond}(\triplef)(-1))$ in $\rmH^{1}(\QQ_{\ell}, \rmN^{\diamond}(\triplef)(-1))$. Then we define $\rmH^{1}_{f}(\QQ_{\ell}, \rmN^{\diamond}_{n}(\triplef)(-1))$ to be the pullback of $\rmH^{1}_{f}(\QQ_{\ell}, \rmN^{\diamond}(\triplef)(-1))$ under the natural map 
\begin{equation*}
\rmH^{1}(\QQ_{\ell}, \rmN^{\diamond}_{n}(\triplef)(-1))\rightarrow \rmH^{1}(\QQ_{\ell}, \rmN^{\diamond}(\triplef)(-1)).
\end{equation*}
The following lemma summarizes well known properties of the local Tate pairing.

\begin{lemma}\label{sel-pairing}
We have the following statements.
\begin{enumerate}
\item The sum $\sum_{v}(\hphantom{a},\hphantom{b})_{v}$ restricted to $\rmH^{1}(\QQ, \rmN^{\diamond}_{n}(\triplef)(-1))\times \rmH^{1}(\QQ,\rmM^{\diamond}_{n}(\triplef)(-1))$ is trivial. Here $v$ runs through all places in $\QQ$. 
\item  For every $v\neq \ell$, there exists an integer $n_{v}\geq 1$ independent of $n$ such that the image of the pairing 
\begin{equation*}
(\hphantom{a}, \hphantom{b})_{v}: \rmH^{1}(\QQ_{v}, \rmN^{\diamond}_{n}(\triplef)(-1))\times \rmH^{1}(\QQ_{v},\rmM^{\diamond}_{n}(\triplef)(-1)) \rightarrow \rmH^{1}(\QQ_{v}, \calO_{n}(1))\cong \calO_{n}
\end{equation*} 
is annihilated by $\varpi^{n_{v}}$.
\item For every $v\neq \ell$, the finite part $\rmH^{1}_{\mathrm{fin}}(\QQ_{v}, \rmN^{\diamond}_{n}(\triplef)(-1))$ is orthogonal to $\rmH^{1}_{\mathrm{fin}}(\QQ_{v},\rmM^{\diamond}_{n}(\triplef)(-1))$ under the pairing $(\hphantom{a}, \hphantom{b})_{v}$. Similarly,  $\rmH^{1}_{f}(\QQ_{\ell}, \rmN^{\diamond}_{n}(\triplef)(-1))$  is orthogonal to $\rmH^{1}_{f}(\QQ_{\ell},\rmM^{\diamond}_{n}(\triplef)(-1))$.
\item Let $p$ be a $(n, 1)$-admissible prime for $f$, then we have a perfect pairing 
\begin{equation*}
\rmH^{1}_{\mathrm{fin}}(\QQ_{p}, \rmN^{\diamond}_{n}(\triplef)(-1))\times \rmH^{1}_{\mathrm{sin}}(\QQ_{p},\rmM^{\diamond}_{n}(\triplef)(-1))\rightarrow \calO_{n}
\end{equation*}
of free $\calO_{n}$-modules of rank $1$. 
\end{enumerate}
\end{lemma}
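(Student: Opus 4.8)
Each of the four assertions is an instance of standard Galois-cohomological duality, so the plan is simply to reduce each to the appropriate such statement. The only inputs that are not purely formal are: the Kummer self-duality $\rmN^{\diamond}_{n}(\triplef)(-1)\cong\Hom_{\calO_{n}}(\rmM^{\diamond}_{n}(\triplef)(-1),\calO_{n}(1))$ recorded before the statement; the vanishing $\rmH^{1}(\QQ_{v},\rmV^{\diamond}(\triplef)(-1))=0$ for all $v\nmid l$, which holds for the symmetric cube component since it is a $G_{\QQ}$-direct summand of $\rmV(\triplef)(-1)$ when $\triplef=(f,f,f)$ and the latter vanishes by \cite[Lemma 4.6]{Liu-cubic} and \cite[Lemma 4.2]{Wang}; and, for (4), the explicit structure of $\rmT_{f,n}$ as a $G_{\QQ_{p}}$-module at an $(n,1)$-admissible prime, together with Assumption \ref{assump}.

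Statement (1) is global reciprocity: after identifying each $(\,\cdot\,,\,\cdot\,)_{v}$ with the local Tate duality pairing via cup product and the local invariant map, the vanishing of $\sum_{v}(\,\cdot\,,\,\cdot\,)_{v}$ on the localizations of global classes is precisely the exactness of the Poitou--Tate sequence for $\calO_{n}(1)$. For (2), I would first note that for $v\nmid l$ the local Euler characteristic formula forces $\rmH^{0}(\QQ_{v},\rmV^{\diamond}(\triplef)(-1))=\rmH^{1}(\QQ_{v},\rmV^{\diamond}(\triplef)(-1))=\rmH^{2}(\QQ_{v},\rmV^{\diamond}(\triplef)(-1))=0$, so that the $\rmH^{0}$ and $\rmH^{1}$ of the lattices $\rmM^{\diamond}(\triplef)(-1)$ and $\rmN^{\diamond}(\triplef)(-1)$ are finite; feeding these into the long exact sequence of $0\to\rmM^{\diamond}_{n}(\triplef)(-1)\to\rmM^{\diamond}(\triplef)(-1)\xrightarrow{\varpi^{n}}\rmM^{\diamond}(\triplef)(-1)\to0$ shows that $\rmH^{1}(\QQ_{v},\rmM^{\diamond}_{n}(\triplef)(-1))$ and $\rmH^{1}(\QQ_{v},\rmN^{\diamond}_{n}(\triplef)(-1))$ are annihilated by a power $\varpi^{n_{v}}$ bounded independently of $n$. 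Since the image of $(\,\cdot\,,\,\cdot\,)_{v}$ is a quotient of $\rmH^{1}(\QQ_{v},\rmN^{\diamond}_{n}(\triplef)(-1))$, it is killed by $\varpi^{n_{v}}$, and $n_{v}=0$ for all but finitely many $v$. For (3) with $v\neq l$, the cup product of two classes of $\rmH^{1}_{\mathrm{fin}}(\QQ_{v},-)=\rmH^{1}(\FF_{v},(-)^{I_{v}})$ is inflated from $\rmH^{2}(\FF_{v},\calO_{n}(1))=0$, so $\rmH^{1}_{\mathrm{fin}}$ is isotropic, and applying this to $\rmN^{\diamond}_{n}(\triplef)(-1)$ and its Kummer dual gives the orthogonality; at $v=l$, the asserted orthogonality is the reduction modulo $\varpi^{n}$ of the Bloch--Kato fact that $\rmH^{1}_{f}(\QQ_{l},\rmV^{\diamond}(\triplef)(-1))$ is isotropic under local duality, combined with the integral definitions of $\rmH^{1}_{f}$ fixed just above the statement.

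For (4), an $(n,1)$-admissible prime $p$ has $\epsilon_{p}(f)=1$ and $a_{p}(f)\equiv p+1\pmod{\varpi^{n}}$, so $X^{2}-a_{p}(f)X+p\equiv(X-1)(X-p)\pmod{\varpi^{n}}$; by Eichler--Shimura and Assumption \ref{assump} this produces a $G_{\QQ_{p}}$-stable splitting $\rmT_{f,n}\cong\calO_{n}\oplus\calO_{n}(1)$ into unramified lines, whence $\rmM^{\diamond}_{n}(\triplef)(-1)\cong\calO_{n}(-1)\oplus\calO_{n}\oplus\calO_{n}(1)\oplus\calO_{n}(2)$ as a $G_{\QQ_{p}}$-module, and likewise for $\rmN^{\diamond}_{n}(\triplef)(-1)$. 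Since $l\nmid p^{2}-1$, among these summands only the trivial one contributes to $\rmH^{1}_{\mathrm{fin}}(\QQ_{p},-)\cong\rmH^{1}(\FF_{p},-)$ and only the $\calO_{n}(1)$-summand contributes to $\rmH^{1}_{\mathrm{sing}}(\QQ_{p},-)\cong\rmH^{1}(I_{p},-)^{G_{\FF_{p}}}$, so $\rmH^{1}_{\mathrm{fin}}(\QQ_{p},\rmN^{\diamond}_{n}(\triplef)(-1))$ and $\rmH^{1}_{\mathrm{sing}}(\QQ_{p},\rmM^{\diamond}_{n}(\triplef)(-1))$ are each free of rank $1$ over $\calO_{n}$. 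Finally, local Tate duality for finite $\calO_{n}$-modules gives a perfect pairing $\rmH^{1}(\QQ_{p},\rmN^{\diamond}_{n}(\triplef)(-1))\times\rmH^{1}(\QQ_{p},\rmM^{\diamond}_{n}(\triplef)(-1))\to\calO_{n}$; by (3) the finite parts are orthogonal, a length count via the above decompositions upgrades this to their being exact mutual annihilators, and the induced quotient pairing is the desired perfect pairing of free rank-$1$ $\calO_{n}$-modules.

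The only steps that are not completely mechanical are the uniformity in $n$ of the bounds $n_{v}$ in (2) and, above all, part (4): each ingredient there is standard, but one must match the three notions $\rmH^{1}_{\mathrm{fin}}$, $\rmH^{1}_{\mathrm{sing}}$, $\rmH^{1}_{f}$ with the explicit Frobenius-eigenspace decomposition in an $\calO_{n}$-linear way and verify that ``orthogonal'' is promoted to ``exact annihilator'', which is exactly where $l\nmid p^{2}-1$ and the sharper requirement $\epsilon_{p}(f)=1$ in the definition of an $(n,1)$-admissible prime enter.
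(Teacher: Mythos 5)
Your proposal is correct and follows essentially the same route as the paper, whose proof simply cites global class field theory for (1), the vanishing $\rmH^{1}(\QQ_{v},\rmV^{\diamond}(\triplef)(-1))=0$ for $v\nmid l$ for (2), standard references for the orthogonality in (3), and the explicit Frobenius decomposition $\rmM^{\diamond}_{n}(\triplef)\cong\calO_{n}\oplus\calO_{n}(1)\oplus\calO_{n}(2)\oplus\calO_{n}(3)$ at an $(n,1)$-admissible prime for (4); your write-up just fills in these standard arguments (Poitou--Tate, boundedness of torsion, cup products factoring through $\rmH^{2}(\FF_{v},\calO_{n}(1))=0$, and the rank-one computation of the finite and singular parts using $l\nmid p^{2}-1$). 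Two cosmetic corrections: in (2) the relevant sequence is $0\to\rmM^{\diamond}(\triplef)(-1)\xrightarrow{\varpi^{n}}\rmM^{\diamond}(\triplef)(-1)\to\rmM^{\diamond}_{n}(\triplef)(-1)\to 0$ (the mod-$\varpi^{n}$ module is the cokernel, not the kernel, of multiplication by $\varpi^{n}$), and the local Euler characteristic formula yields $\rmH^{0}=\rmH^{2}=0$ only in combination with the cited vanishing of $\rmH^{1}$, not on its own.
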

\begin{proof}
The statement $(1)$ follows from global class field theory.  Part $(2)$ follows from the fact that $\rmH^{1}(\QQ_{v}, \rmV^{\diamond}(\triplef))=0$ for all $v\nmid \ell$ and thus $\rmH^{1}(\QQ_{v},\rmM^{\diamond}(\triplef))$ is torsion for all $v\nmid \ell$, see  \cite[Lemma 4.3]{Liu-HZ}. Part $(3)$ is well known, see \cite[Theorem 2.17(e)]{DDT} for the first statement and \cite[Lemma 4.8]{Liu-HZ} for the second statement. 

For $(4)$, it follows from the definition of an $(n, 1)$-admissible prime for $f$ that $\rmM_{n}(\triplef)$ is unramified at $p$ and
$\rmM_{n}(\triplef)\cong\calO_{n}\oplus \calO^{\oplus 3}_{n}(1) \oplus \calO^{\oplus 3}_{n}(2) \oplus\calO_{n}(3)$
as a Galois module for $\rmG_{\QQ_{p}}$. Then it follows from a simple computation that $\rmM^{\diamond}_{n}(\triplef)\cong\calO_{n}\oplus \calO_{n}(1) \oplus \calO_{n}(2) \oplus\calO_{n}(3)$. From this, it follows immediately that both $\rmH^{1}_{\mathrm{sin}}(\QQ_{p},\rmM^{\diamond}_{n}(\triplef)(-1))$ and $\rmH^{1}_{\mathrm{fin}}(\QQ_{p},\rmM^{\diamond}_{n}(\triplef)(-1))$ are free of rank $1$ over $\calO_{n}$. The last claim also follows form this. 
\end{proof}

We have the Abel--Jacobi map 
\begin{equation*}
\mathrm{AJ}^{\diamond}_{\triplef, n}: \mathrm{CH}^{2}(\rmX^{3}_{d})\rightarrow \rmH^{1}(\QQ, \rmM^{\diamond}_{n}(\triplef)(-1))
\end{equation*}
for $\rmM^{\diamond}_{n}(\triplef)(-1)$ constructed by composing the Abel--Jacobi map \eqref{AJ-n} with the natural projection map from $\rmM_{n}(\triplef, d)(-1)$ to its symmetric cube component $\rmM^{\diamond}_{n}(\triplef)(-1)$. We will denote by $\Theta^{\diamond}_{n}(\triplef, d) \in \rmH^{1}(\QQ, \rmM^{\diamond}_{n}(\triplef)(-1))$ the image of the Gross--Kudla--Schoen diagonal cycle $\Delta_{d}=\theta_{*}[\rmX_{d}]$ under the map $\mathrm{AJ}^{\diamond}_{\triplef, n}$.

Let $(p, q)$ be a pair of $(n, 1)$-admissible primes for $f$. Then we have the Shimura curve $\rmX^{\natural}_{d}$ and its integral model $\interX^{\natural}_{d}$ over $\ZZ[1/Ndp]$ considered in the setting of ramified arithmetic level-raising. We have another Abel--Jacobi map
\begin{equation*}
\mathrm{AJ}^{\diamond[pq]}_{\triplef, n}: \mathrm{CH}^{2}(\rmX^{\natural3}_{d})\rightarrow \rmH^{1}(\QQ,  \rmM^{\diamond}_{n}(\triplef)(-1))
\end{equation*}
for $\rmM^{\diamond}_{n}(\triplef)(-1)$ constructed by composing the Abel-Jacobi map \eqref{AJ-p-n} with the projection from $\rmM^{[pq]}_{n}(\triplef, d)(-1)\cong\rmM_{n}(\triplef, d)(-1)$ to its symmtric cube component $\rmM^{\diamond}_{n}(\triplef)(-1)$. We denote by 
\begin{equation*}
\Theta^{\diamond[pq]}_{n}(\triplef, d) \in \rmH^{1}(\QQ,  \rmM^{\diamond}_{n}(\triplef)(-1))
\end{equation*}
the image of $\Delta^{\natural}_{d}=\theta^{\natural}_{*}[\rmX^{\natural}_{d}]$ under 
the map $\mathrm{AJ}^{\diamond[pq]}_{\triplef, n}$. 
Since the pair $(\Theta^{\diamond}_{n}(\triplef, d), \Theta^{\diamond[pq]}_{n}(\triplef,d))$
satisfy  reciprocity laws as in  Theorem \ref{1-law} and Theorem \ref{2-law}, the collection of $(\Theta^{\diamond}_{n}(\triplef,d), \Theta^{\diamond[pq]}_{n}(\triplef,d))$ along with the Gross--Kudla periods form a bipartite Euler system in a slightly broader sense of \cite{How}: the Galois representations are assumed to be two-dimensional in \cite{How}.

\begin{myproof}{Theorem}{\ref{main-rank-1}}

We proceed by assuming that the dimension of $\rmH^{1}_{f}(\QQ, \mathrm{V}^{\diamond}(\triplef)(-1))$ is greater than $1$ and we will derive a contradiction from this. Consider the element $\Theta^{\diamond}(\triplef, d)$ in the statement of the theorem. Assume it is non-zero, then we can alway find an integer $n\geq 1$ large enough such that the element $\Theta^{\diamond}_{n}(\triplef, d)$ is non-zero in $\rmH^{1}(\QQ,  \rmN^{\diamond}_{n}(\triplef)(-1))$.  By \cite[Lemma 5.9]{Liu-HZ}, we can find a free $\calO_{n}$-module $S_{n}$ of rank $2$ in $\rmH^{1}_{f}(\QQ,  \rmN^{\diamond}_{n}(\triplef)(-1))$ with a basis $\{s, s^{\prime}\}$ such that $\Theta^{\diamond}_{n}(\triplef,d)=\varpi^{n_{0}}s$ for some $n_{0}< n$. 

Under the assumptions in the theorem, there are infinitely many $(n, 1)$-admissible primes for $f$ by the proof of \cite[Theorem 3.2]{BD}. By the same argument as in \cite[Theorem  5.7]{LT} which relies on \cite[Lemma 4.16, Lemma 4.11]{Liu-cubic}, we can choose a pair of admissible primes $(p, q)$ such that 
\begin{itemize}
\item the image of $\loc_{p}(s^{\prime})$ in $\rmH^{1}_{\mathrm{fin}}(\QQ_{p},  \rmN^{\diamond}_{1}(\triplef)(-1))$ is $0$;
\item the image of  $\loc_{p}(s)$ in  $\rmH^{1}_{\mathrm{fin}}(\QQ_{p},  \rmN^{\diamond}_{1}(\triplef)(-1))$ is non-zero;
\item the image of $\loc_{q}(s^{\prime})$ in $\rmH^{1}_{\mathrm{fin}}(\QQ_{q},  \rmN^{\diamond}_{1}(\triplef)(-1))$ is  non-zero.
\end{itemize}
By \cite[Lemma 3.4]{Liu-HZ}, we know
\begin{itemize}
\item $\loc_{v}(s^{\prime})\in \rmH^{1}_{\mathrm{fin}}(\QQ_{v}, \rmN^{\diamond}_{n}(\triplef)(-1))$ for all $v\nmid \ell N$;
\item $\loc_{\ell}(s^{\prime})\in \rmH^{1}_{f}(\QQ_{\ell}, \rmN^{\diamond}_{n}(\triplef)(-1))$. 
\end{itemize} 
We consider the element $\Theta^{\diamond [pq]}_{n}(\triplef, d)\in \rmH^{1}(\QQ, \rmM^{\diamond}_{n}(\triplef)(-1))$. This element has the following properties
\begin{itemize}
\item $\loc_{v}(\Theta^{\diamond[pq]}_{n}(\triplef,d))\in \rmH^{1}_{\mathrm{fin}}(\QQ_{v}, \rmM^{\diamond}_{n}(\triplef)(-1))$ for all $v\nmid \ell pqNd$;
\item $\loc_{\ell}(\Theta^{\diamond[pq]}_{n}(\triplef, d))\in \rmH^{1}_{f}(\QQ_{\ell}, \rmM^{\diamond}_{n}(\triplef)(-1))$.
\end{itemize}
These properties follow from the fact that $\interX^{\natural 3}_{d}$ has good reduction away from $pqNd$. By Lemma \ref{sel-pairing} $(2)$ and $(3)$, there is an integer $n_{N}\geq 1$  such that
\begin{equation}\label{use1}
\varpi^{n}\mid \sum_{v\not\in \{p, q\}} \varpi^{n_{N}}(s^{\prime}, \Theta^{\diamond[pq]}_{n}(\triplef, d))_{v}.
\end{equation}
Since  $\loc_{p}(s^{\prime})=0$ in $\rmH^{1}_{\mathrm{fin}}(\QQ_{p},  \rmN^{\diamond}_{1}(\triplef)(-1))$, we have $(s^{\prime}, \Theta^{\diamond[pq]}_{n}(\triplef,d))_{p}=0$. 
Let  $\phi\in  \Gamma(\rmZ_{d}(\overline{B}),E_{\lambda}/\calO_{\lambda})[\frakp^{[p]}_{n}]$ be an element such that its image 
under the map $\Gamma(\rmZ_{d}(\overline{B}),E_{\lambda}/\calO_{\lambda})[\frakp^{[p]}_{n}]\xrightarrow{\overline{\pi}_{1,d,\ast}} \Gamma(\rmZ(\overline{B}),E_{\lambda}/\calO_{\lambda})[\frakp^{[p]}_{n}]$
generates this rank one module.  By the choice of $s$, we have 
\begin{equation*}
\varpi^{n_{0}}\mid (\loc_{p}(\Theta^{\diamond}_{n}), \phi\otimes\phi\otimes\phi) \phantom{aa}\text{ but }\phantom{bb} \varpi^{n_{0}+1}\nmid (\loc_{p}(\Theta^{\diamond}_{n}(\triplef,d)), \phi\otimes\phi\otimes\phi).
\end{equation*}
Since we have
\begin{equation*} 
(\partial_{q}\Theta^{\diamond [pq]}_{n}(\triplef,d), \phi\otimes \phi\otimes \phi)
=(1+q)^{3}(\loc_{p}(\Theta^{\diamond}_{n}(\triplef,d)), \phi\otimes\phi\otimes\phi) 
\end{equation*}
by Theorem \ref{1-law} and $\ell\nmid 2(q+1)$, 
\begin{equation}\label{use2}
\varpi^{n_{0}}\mid(s^{\prime}, \Theta^{\diamond[pq]}_{n}(\triplef,d))_{q} \hphantom{aa}\text{but}\hphantom{bb} \varpi^{n_{0}+1}\nmid(s^{\prime}, \Theta^{\diamond[pq]}_{n}(\triplef,d))_{q}.
\end{equation}
We can choose $n, n_{0}, n_{N}$ such that $n> n_{0}+n_{N}$. By Lemma \ref{sel-pairing} $(1)$, 
\begin{equation*}
\varpi^{n}\mid \sum_{v}(s^{\prime}, \Theta^{\diamond[pq]}_{n}(\triplef,d))_{v}
\end{equation*}
where the sum runs through all the places $v$ of $\QQ$. This implies 
\begin{equation*}
\varpi^{n-n_{N}}\mid(s^{\prime}, \Theta^{\diamond[pq]}_{n}(\triplef,d))_{q} 
\end{equation*}
by \eqref{use1}. This is a contradiction to \eqref{use2}.
\end{myproof}


\begin{thebibliography}{wang}

\bibitem[BC]{BC-unifor} 
J.-F.~Boutot and  H.~Carayol,
 {\em Uniformisation {$p$}-adique des courbes de {S}himura: les
 th\'{e}or\`emes de {C}erednik et de {D}rinfeld}, Courbes modulaires et courbes de Shimura (Orsay, 1987/1988), Ast\'{e}risque, (1991), {\bf 196-197}, 45--158.

\bibitem[Buz]{Buzzard} 
K.~ Buzzard, 
 {\em Integral models of certain Shimura curves}, Duke. Math. J. {\bf 87}, (1997), no.3, 591--612. 
 
 \bibitem[BD]{BD} 
M.~Bertolini, H.~Darmon,
{\em Iwasawa's main conjecture for elliptic curves over anticyclotomic {$\Bbb Z_p$}-extensions}, Ann. of Math. (2) {\bf 162}, (2005), 1--64. 

\bibitem[BDRSV]{BDRSV}
M.~Bertolini, H.~ Darmon, V.~ Rotger, M.~Seveso and R.~Venerucci, {\em Heegner points, Stark-Heegner points, and diagonal classes}, Asterisque  {\bf 434}, (2022).

\bibitem[BLR1]{BLR1} 
S.~Bosch, W.~ L\"{u}tkebohmert and M.~Raynaud,
{\em N\'{e}ron models}, Ergebnisse der Mathematik und ihrer Grenzgebiete {\bf 21}, Springer-Verlag, Berlin, 1990, x+325. 

\bibitem[BLR2]{BLR2} N.~Boston,  H.~Lenstra and K.~Ribet, {\em Quotients of group rings arising from two-dimensional representations},
C. R. Acad. Sci. Paris S\'{e}r. I Math.{\bf 312} (1991), no. {\bf 4}, 323--328. 

\bibitem[BK]{BK} S.~Bloch, K.~Kato, {\em L-functions and Tamagawa numbers of Motives}, The Grothendieck Festschrift, Vol.I, Progr. Math., vol.86, 330--400.

              
\bibitem[CH]{Ca-H}
F.~ Castella, M-L.~Hsieh,
{\em On the non-vanishing of generalized Kato classes for elliptic curves of rank two},  
Forum of Mathematics, Sigma, {\bf 10}, (2022).      
                          
\bibitem[CH1]{CH-1} 
M. Chida, M-L. Hsieh,
 {\em Special values of anticyclotomic {$L$}-functions for modular
              forms}, Compos. Math. {\bf 151}, (2015), 863--897.
              
\bibitem[CH2]{CH-2} 
M. Chida, M-L. Hsieh,
 {\em Anticyclotomic Iwasawa main conjecture for modular
         forms}, J. Reine Angew. Math. {\bf 741}, (2018), no.5, 87--131.
         
%\bibitem[Dia]{Diamond-TW}
%F.~Diamond.
%\newblock The {T}aylor-{W}iles construction and multiplicity one.
%\newblock {\em Invent. Math.}, {\bf128(2)}, (1997), 379--391.           
         
\bibitem[DDT]{DDT} 
H.~Darmon, F.~Diamond and R.~Taylor, {\em Fermat's last theorem}, Current Developments in Mathematics, 1995 (Cambridge, MA), 1--154, Internat. press, Cambridge, MA, 1994.          

\bibitem[DR1]{DR-1} 
H. Darmon, V. Rotger,
 {\em Diagonal cycles and {E}uler systems {I}: {A} {$p$}-adic
              {G}ross-{Z}agier formula}, Ann. Sci. \'{E}c. Norm. Sup\'{e}r. {\bf 47}, (2014), no.4, 779--832.   
              
\bibitem[DR2]{DR-2} 
H. Darmon, V. Rotger,
 {\em Diagonal cycles and {E}uler systems {II}: {T}he {B}irch and
              {S}winnerton-{D}yer conjecture for {H}asse-{W}eil-{A}rtin
              {$L$}-functions}, J. Amer. Math. Soc. {\bf 30}, (2017), no.3, 601--672.
              
\bibitem[DT]{DT} 
F.~Diamond and R.~Taylor, {\em Non-optimal level of mod $l$ modular representations}, Invent. Math. {\bf 115}, (1994), no.3, 435--462.   

\bibitem[Gro]{Gro-Koly}
 B.H. Gross, 
{\em Kolyvagin's work on modular elliptic curves}, {$L$}-functions and arithmetic ({D}urham, 1989), London Math. Soc. Lecture Note Ser.
{\bf 153} (1991), 235--256, Cambridge Univ. Press, Cambridge.

\bibitem[GK]{GK}
 B.H. Gross,  and S. Kudla, 
{\em Heights and the central critical values of triple product
              {$L$}-functions}, Compositio Math. {\bf 81} (1992), no. 2, 143--209. 

\bibitem[GS]{GS}
 B.H. Gross,  and C. Schoen, 
{\em The modified diagonal cycle on the triple product of a pointed curve}, Ann. Inst. Fourier (Grenoble). {\bf 45} (1995), no. 3, 649--679.
              
\bibitem[Hsieh]{Hsieh} 
M. Hsieh, 
{\em Hida families and p-adic triple product $L$-functions},  American J. Math.  {\bf 143} (2021), no.2, 411--532. 

\bibitem[How]{How}
 B.~ Howard, 
{\em Bipartite Euler systems}, J. Reine Angew. Math. {\bf 597} (2006), 1--25.              
                       
\bibitem[Ich]{Ichino}              
A. Ichino, {\em Trilinear forms and the central values of triple product L-functions}, 
Duke Math. J. {\bf 145} (2008), no.2, 281--307. 
              
\bibitem[Ill1]{Illusie1}              
L. Illusie, {\em Autour du th\'{e}or\`eme de monodromie locale},  P\'{e}riodes $p$-adiques (Bures-sur-Yvette, 1988), Ast\'{e}risque, (1994), 9--57. 

\bibitem[Ill2]{Illusie2}              
L. Illusie, {\em Sur la Formule de Picard-Lefschetz},  Algebraic Geometry 2000, Azumino, Advanced studies in Pure Mathematics, (2002), 249--268. 

%\bibitem[Ito]{Ito-p-uni}
%T. Ito, {\em Weight-monodromy conjecture for {$p$}-adically uniformized varieties}, Invent. Math. {\bf 159} (2005), no.3, 607--656.

\bibitem[Liu1]{Liu-HZ}
Y. Liu, {\em Hirzebruch-{Z}agier cycles and twisted triple product {S}elmer groups}, 
Invent. Math. {\bf 205} (2016) no.3  693--780.

\bibitem[Liu2]{Liu-cubic}
Y. Liu, {\em Bounding cubic-triple product {S}elmer groups of elliptic curves}, 
J. Eur. Math. Soc. (JEMS). {\bf 21} (2017) no.5  1411--1508.

\bibitem[LT]{LT}
Y.~ Liu and Y.~Tian, {\em Supersingular locus of Hilbert modular varieties, arithmetic level raising and Selmer groups}, 
Algebra Number Theory  {\bf 14(8)} (2020) 2059--2119.

\bibitem[Maz]{Maz}
B.~Mazur,
{\em An introduction to the deformation theory of Galois representations}, Modular Forms and Fermat's Last Theorem. Springer, New York, NY. 

\bibitem[MS21]{MS}
J.~Manning and J.~Shotton.
{\em Ihara's lemma for Shimura curves over totally real fields via patching}, Math. Ann. {\bf 379} (2021), 187--234.

\bibitem[Nek]{Nekovar}
J.~Nekovar, {\em $p$-adic Abel-Jacobi maps and $p$-adic heights}, The arithmetic and geometry of algebraic cycles, CRM Proc. Lecture Notes,  {V}ol. 24, Amer.Math.Soc, Providence, RI, 2000, 367--379. 

\bibitem[LTX]{LTX}
Y.~ Liu, Y.~Tian, L.~Xiao {\em Iwasawa's main conjecture for Rankin-Selberg motives in the anticyclotomic case}, 
arXiv:2406.00624, preprint.

\bibitem[LTXZZ]{LTXZZ}
Y.~ Liu, Y.~Tian, L.~Xiao, W.~Zhang and Z.~Zhu, {\em On the Beilinson--Bloch--Kato conjecture for Rankin--Selberg motives}, 
Invent. math. {\bf 228} (2022) 107--375.

\bibitem[KH]{KH91}
S.~Kudla and M.~Harris , {\em The central critical value of a triple product {$L$}-function}, 
Ann. of Math. {\bf 133} (1991) no.2,  605--672.

\bibitem[KR]{KR00}
S.~Kudla and M.~Rapoport , {\em Height pairings on {S}himura curves and {$p$}-adic uniformization}, 
Invent. Math. {\bf 142} (2000) no.1, 153--223.

\bibitem[KS]{KS}
H.~Kim and F.~Shahidi, {\em Symmetric cube $L$-functions for $\GL_{2}$ are entire}, 
Ann. of Math. {\bf 150} (1999) no. 2, 645--662.

\bibitem[Rib1]{Ri-ICM}
K.~Ribet, {\em Congruence relations between modular forms}, Proceedings of the {I}nternational {C}ongress of {M}athematicians, {V}ol. 1, 2 ({W}arsaw, 1983) (1984), 503--514, PWN, Warsaw.

\bibitem[Rib2]{Ri-100}
K.~Ribet, {\em On modular representations of {${\rm Gal}(\overline{\bf Q}/{\bf Q})$} arising from modular forms}, Invent. Math. {\bf 100} (1990) no.2
 431--476.
 
\bibitem[RZ]{RZ}
M.~Rapoport and Th.~Zink, {\em \"{U}ber die lokale {Z}etafunktion von {S}himuravariet\"{a}ten. {M}onodromiefiltration und verschwindende {Z}yklen in
ungleicher {C}harakteristik}, Invent. Math. Vol.68  (1982), no.1, 21--101. 
 
\bibitem[Sai]{Saito}
T.~Saito, {\em Weight spectral sequences and independence of $l$}, J. Inst. Math. Jussieu. {\bf 2} (2003) no.4
583--634.

\bibitem[SGA7 II]{SGA7}
P.~Deligne and N.~ Katz, {Groupes de monodromie en g\'eom\'etrie alg\'ebrique. {II}}, {S{\'e}minaire de G{\'e}om{\'e}trie Alg{\'e}brique du Bois-Marie 1967--1969 (SGA 7, {II})}, {Lect. Notes Math. Vol. 340},  {Springer-Verlag, Berlin}, 1973.

\bibitem[Wang]{Wang}
H.~Wang, {\em Arithmetic level raising on triple product of Shimura curves and Gross--Schoen Diagonal cycles I: Ramified case}, Algebra Number Theory {\bf 16(10)} (2022) 2289--2338.

\bibitem[Xiao]{Xiao}
L.~Xiao, {\em Lectures on arithmetic level raising at Morningside center}.

\bibitem[YZZ]{YZZ-dia}
X.~Yuan, S.~Zhang and W.~Zhang, {\em Triple product L-series and Gross--Kudla--Schoen cycles}, preprint. 
 
\end{thebibliography}
\end{document}